\numberwithin{equation}{section}
\theoremstyle{plain}
\newtheorem{thm}{Theorem}[section]
\newtheorem{lem}[thm]{Lemma}
\newtheorem{prop}[thm]{Proposition}
\newtheorem{cor}[thm]{Corollary}
\newtheorem{rem}[thm]{Remark}
\newtheorem*{convention}{Convention}
\theoremstyle{remark}
\def\R{\mathbb{R}}
\def\Z{\mathbb{Z}}
\def\N{\mathbb{N}}
\def\B{\mathbb{B}}
\def\S{\mathbb{S}}
\def\W{\mathcal{W}}
\def\d{\delta}
\def\a{\alpha}
\def\T{\mathbb{T}}
\newcommand{\dist}{\text{dist}}
\newcommand{\id}{\mathrm{id}}
\DeclareMathOperator{\diam}{diam}
\newcommand{\cX}{\mathcal{X}}
\newcommand{\cS}{\mathcal{S}}
\newcommand{\cW}{\mathcal{W}}
\newcommand{\cM}{\mathcal{M}}
\newcommand{\cT}{\mathcal{T}}
\newcommand{\Mod}{\operatorname{Mod}}
\newcommand{\haus}{\operatorname{\mathcal{H}}}
\newcommand{\interior}{\mathrm{int}}
\newcommand{\fI}{\mathfrak{I}}
\newcommand{\Bd}{\mathrm{Bd}}
\newcommand{\Wh}{\mathrm{Wh}}
\newcommand{\HW}{\mathrm{HW}}
\newcommand{\sS}{\mathscr{S}}
\begin{document}

\title[Quasiconformal non-parametrizability]{Quasiconformal non-parametrizability of almost smooth spheres}
\author{Pekka Pankka}
\address{Department of Mathematics and Statistics, P.\,O.\, Box 35 (MaD), FI-40014
University of Jyv\"{a}skyl\"a, Finland \and 
Department of Mathematics and Statistics, P.\,O.\, Box 68, FI-00014
University of Helsinki, Finland}
\email{pekka.pankka@helsinki.fi}
\author{Vyron Vellis}
\address{Department of Mathematics and Statistics, P.\,O.\, Box 35 (MaD), FI-40014
University of Jyv\"{a}skyl\"a, Finland \and Department of Mathematics, University of Connecticut, 341 Mansfield Road U1009
Storrs, CT 06269, USA}
\email{vyron.vellis@uconn.edu}
\date{\today}
\thanks{P.P. and V.V. were supported by the Academy of Finland (projects 283082 and 257428).}
\keywords{quasiconformal gauge, quasisymmetric non-parametrization, almost smooth Riemannian metric, decomposition space}
\subjclass[2010]{30L10 (30C65)}

\begin{abstract}
We show that, for each $n\ge 3$, there exists a smooth Riemannian metric $g$ on a punctured sphere $\S^n\setminus \{x_0\}$ for which the associated length metric extends to a length metric $d$ of $\S^n$ with the following properties: the metric sphere $(\S^n,d)$ is Ahlfors $n$-regular and linearly locally contractible but there is no quasiconformal homeomorphism between $(\S^n,d)$ and the standard Euclidean sphere $\S^n$.
\end{abstract}

\maketitle

\section{Introduction}
\label{sec:intro}

The \emph{(quasi)conformal gauge} of the Euclidean $n$-sphere $\S^n$ is the maximal collection of all metrics $d$ on $\S^n$ for which there exists a quasiconformal homeomorphism $(\S^n,d) \to \S^n$. Here, and in what follows, $\S^n$ refers to both the subset $\{ (x_1,\ldots, x_{n+1})\in \R^{n+1}\colon x_1^2 + \cdots + x_{n+1}^2 =1\}$ of $\R^{n+1}$ but also the metric space $(\S^n,d_0)$, where $d_0$ is the Euclidean metric induced from $\R^{n+1}$ by inclusion. 

The problem of characterizing this gauge is a relaxation of the Beltrami problem in the analytic theory of quasiconformal mappings. Indeed, whereas the Beltrami problem asks whether a given measurable Riemannian metric $g$ on $\S^n$ admits conformal map $(\S^n,g) \to (\S^n,g_0)$ into the standard Riemannian metric $g_0$, the gauge characterization problem merely asks for a quasiconformal map between metrics. We refer to Heinonen  \cite[Section 15]{HeinonenJ:Lecams} for a detailed discussion on the terminology and background of the (quasi)conformal gauge. 

Characterization of the quasiconformal gauge has turned out to be a formidable problem, and the question remains open also for the quasisymmetric gauge in higher dimensions; the \emph{quasisymmetric gauge} of the Euclidean sphere $\S^n$ consists of all metric spheres $(\S^n,d)$ admitting a quasisymmetric homeomorphism $(\S^n,d)\to \S^n$; see Section \ref{sec:pre} for terminology.

In dimensions $n=1$ and $n=2$ the quasisymmetric gauge is fully understood. For $\S^1$ the metric characterization for the quasisymmetric gauge is due to Tukia and V\"ais\"al\"a \cite{TukiaP:Quaems} and for $\S^2$ this gauge is characterized by Bonk and Kleiner \cite{BonkM:Quaptd}; see also Wildrick \cite{WildrickK:Quaptd, WildrickK:Quasos}. In particular, all Ahlfors $2$-regular and linearly locally contractible (LLC) metric $2$-spheres $(\S^2,d)$ are quasisymmetrically equivalent to $\S^2$. We note in passing that Ahlfors $n$-regular and LLC metric spheres $(\S^n,d)$ are $n$-Loewner spaces and quasiconformal maps $(\S^n,d)\to \S^n$ are quasisymmetric; see Heinonen-Koskela \cite{HeinonenJ:Quamms}. We refer to Rajala \cite{RajalaK:Uni2d} for recent results on quasiconformal parametrization of metric $2$-spheres.

In higher dimensions these metric conditions are not sufficient for quasisymmetric parametrization. By results of Semmes \cite{SemmesS:Goomsw} (dimension $n=3$) and Heinonen--Wu \cite{HW} (dimensions $n>3$), \emph{there exists for each $n\ge 3$ an Ahlfors $n$-regular, LLC, and geodesic $n$-sphere, which is not quasisymmetrically equivalent to the standard sphere $\S^n$.}

The metric sphere $(\S^3,d_S)$ Semmes considered in \cite{SemmesS:Goomsw} is the decomposition space $\S^3/\Bd$ associated to the Bing double and the metric $d_S$ is obtained by an embedding $\S^3/\Bd \to \S^4$; see Section \ref{sec:Dec} for definitions. Heinonen and Wu consider in \cite{HW} the decomposition space $\R^3/\Wh$, associated to the Whitehead continuum, and construct an Ahlfors $3$-regular and linearly locally contractible metric $d_{\HW}$ on $\R^3/\Wh$. For $n>3$, the stabilization $\R^3/\Wh\times \R^{n-3}$ is homeomorphic to $\R^n$ and a product metric, also denoted by $d_{\HW}$, in the stabilized space $\R^3/\Wh\times \R^{n-3}$ is Ahlfors $n$-regular and LLC. A metric sphere $(S_{\HW}^n,d_{\HW})$, which is not in the quasisymmetric gauge of $\S^n$, is now obtained by one-point compactification of $(\R^3/\Wh\times \R^{n-3},d_{\HW})$.

Neither the sphere $\S^3/\Bd$ nor the spheres $S_{\HW}^n$ have \emph{a priori} smooth structures; choices of homeomorphisms $\S^3/\Bd \to \S^3$ and $S_{\HW}^n \to \S^n$ introduce such on these spaces. Note that there exists a homeomorphism $\S^3/\Bd\to \S^3$ and a Cantor set $C\subset \S^3/\Bd$ for which the domain $(\S^3/\Bd)\setminus C$ is diffeomorphic to a domain in the standard sphere $\S^3$. Under this parametrization of $\S^3/\Bd$, we may take the metric $d_S$ to be the completion of a Riemannian distance in $(\S^3/\Bd)\setminus C$; see Section \ref{sec:BB} for details. Similarly, in the Heinonen--Wu example $(S_{\HW}^n, d_{\HW})$ there exists a codimension $3$ sphere $S$ for which $S_{\HW}^n\setminus S$ is diffeomorphic to an open subset of $\S^n$ and for which $d_{HW}$ is a completion of the distance $d_g$ associated to a Riemannian metric $g$ in $S_{\HW}^n\setminus S$.

We say that a length metric $d$ on $\S^n$ is \emph{almost smooth} if there exists a compact set $E\subset \S^n$ (called \emph{singular set}) and a smooth Riemannian metric $g$ in $\S^n\setminus E$ for which $d$ is the completion of the distance $d_g$ associated to $g$. Recall that a metric $d$ on $\S^n$ is a \emph{length metric} if $d(x,y) = \inf_\gamma \ell_d(\gamma)$ for all points $x,y\in  \S^n$, where $\gamma$ is a path connecting $x$ and $y$ and $\ell_d(\gamma)$ is the length of $\gamma$ in metric $d$. 

We show that, for each $n\ge 3$, there exists an almost smooth metric $d$ on $\S^n$ having a singular set consisting of only one point but for which there is no quasisymmetric homeomorphism $(\S^n,d)\to \S^n$.

\begin{thm}
\label{thm:point}
For each $n \ge 3$ there exists an almost smooth Ahlfors $n$-regular and linearly locally contractible length metric $d$ in $\S^n$ with a singular set consisting of a single point $x_0\in \S^n$ for which there is no quasiconformal homeomorphism $(\S^n,d)\to \S^n$.
\end{thm}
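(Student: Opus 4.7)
The plan is to adapt the defining sequences of Semmes and of Heinonen--Wu so that all non-trivial decomposition elements accumulate at a single chosen point $x_0 \in \S^n$ rather than on a Cantor set or on a codimension-$3$ sphere. Identifying a smooth neighborhood of $x_0$ with a punctured Euclidean ball via a chart, I place a tower of shrinking, clasped building blocks in the annular shells $A_k = \{r_{k+1}\le |x|\le r_k\}$ for a geometric sequence of scales $r_k \searrow 0$. On each shell the block is a smoothly rescaled copy of one stage of the Heinonen--Wu model, clasped inside the previous shell in the Whitehead (or Bing) fashion, and equipped with a rescaled copy of a fixed smooth Riemannian ``building block'' metric $g_*$. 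Smooth interpolation on collars of $\partial A_k$ makes $g$ a smooth Riemannian metric on $\S^n\setminus\{x_0\}$. With $r_k$ decaying geometrically, the $g$-diameter of the tower is finite, so the length metric $d_g$ completes on $\S^n$ by adding exactly the single point $x_0$; this completion $d$ is the candidate.

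For Ahlfors $n$-regularity, the scales $r_k$ are chosen so that the $g_k$-volume of each shell is comparable to $r_k^n$; balls centered at $x_0$ then satisfy $\haus^n_d(B_d(x_0,r))\asymp r^n$ because only boundedly many shells contribute at each scale, and balls centered elsewhere inherit the estimate from the local smooth model. Linear local contractibility follows in the same way: each building block is contractible in a bounded enlargement of itself at a scale comparable to its diameter, a property built into the Heinonen--Wu model, and a ball meeting several shells is contracted shell-by-shell down the tower, with the total distortion summable because the $r_k$ are geometric.

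The decisive step is to show that $(\S^n,d)$ admits no quasiconformal homeomorphism onto $\S^n$. Since $(\S^n,d)$ will be Ahlfors $n$-regular and LLC, it is $n$-Loewner, so by the Heinonen--Koskela theory recalled in the introduction any such quasiconformal map would automatically be quasisymmetric and would distort $n$-moduli only by a controlled factor. My plan is to inherit the modulus obstruction from Heinonen--Wu: at the $k$-th shell the clasped pair of blocks provides two disjoint continua $E_k,F_k$ whose linking forces a uniform lower bound $\Mod_n(E_k,F_k;\S^n,d)\ge c_0>0$ independent of $k$. Any homeomorphism $f\colon(\S^n,d)\to\S^n$ sends the pair $(E_k,F_k)$ to continua in the standard sphere that both shrink to $f(x_0)$; standard capacity estimates in the smooth sphere then give $\Mod_n(f(E_k),f(F_k);\S^n)\to 0$, contradicting the quasisymmetry of $f$.

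The main obstacle is the \emph{uniformity} of the modulus lower bound along the telescoping tower: the rescaling and the smooth interpolation near each $\partial A_k$ must not degrade the Heinonen--Wu estimate, and the presence of infinitely many smaller nested blocks inside the $k$-th shell must not lower the modulus of the $k$-th linked pair. Handling this will require a careful comparison of each $g_k$ with the Heinonen--Wu reference metric via a controlled bi-Lipschitz distortion, together with a modulus-monotonicity argument showing that passing to nested sub-blocks does not increase the relevant modulus. Once this uniformity is in place, the contradiction in the previous paragraph completes the proof.
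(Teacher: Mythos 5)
Your plan of planting shrinking building blocks in a telescoping family of shells accumulating at a single point $x_0$, smoothing inside each shell, and completing by one point is essentially the paper's construction (the paper uses a sequence of disjoint Euclidean balls rather than annular shells, but the spirit is the same, and your observations about Ahlfors regularity and LLC follow the same local-model-plus-summability reasoning the paper cites from \cite{PW}). The genuine problem is the non-parametrizability argument, which as stated would not yield a contradiction.

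You propose extracting from each shell a pair of disjoint linked continua $E_k,F_k$, asserting a uniform lower bound $\Mod_n(\Gamma(E_k,F_k);(\S^n,d))\ge c_0$, and claiming that $\Mod_n(\Gamma(f(E_k),f(F_k));\S^n)\to 0$ because both images shrink toward $f(x_0)$. This last step is false: $\Mod_n$ of a condenser pair in $\S^n$ is conformally invariant and therefore scale-invariant, so the fact that $f(E_k)$ and $f(F_k)$ both shrink to a point has no effect on the modulus. What matters is the \emph{relative} distance $\dist(f(E_k),f(F_k))/\min(\diam f(E_k),\diam f(F_k))$, and a quasisymmetry distorts this by a controlled amount, so if your $(E_k,F_k)$ have bounded relative distance in $(\S^n,d)$, so do their images, and the Loewner estimates bound $\Mod_n(\Gamma(f(E_k),f(F_k)))$ uniformly from \emph{below}. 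In short, path modulus between a single linked pair of continua is too coarse to detect a wild Cantor set; it satisfies the same Loewner bounds in both spaces, so no contradiction can arise this way. (This is precisely why Semmes and Heinonen--Wu needed a different invariant.)

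What the paper uses instead is the conformal modulus $\Mod_{n/(n-2)}$ of \emph{families of $(n-2)$-tori} that are core tori of the Bing--Blankinship tubes, combined with the Freedman--Skora intersection-counting bound (Proposition~\ref{prop:vie}, Corollary~\ref{cor:vie}): an interior-essential disc through the configuration must meet the $k$-th generation of clasped tubes in at least $2^k$ essential pieces. By the coarea formula this forces an area lower bound $\inf_{t\in\sS_{w}}\haus^{n-2}(t)\gtrsim \delta^{n-2}$ for a suitable generation-$k$ word $w$ in the image, while $\diam f(T_{w})\to 0$; the ratio then drives $\Mod_{n/(n-2)}(f\sS_{w_k})\to 0$ (Proposition~\ref{prop:HW}). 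Meanwhile the self-(quasi)similarity of the Semmes metric gives a uniform lower bound $\Mod_{n/(n-2)}(\sS_w)\ge c_0$ (Proposition~\ref{prop:Semmes_modulus}), and quasi-invariance of the conformal modulus under the putative quasiconformal map produces the contradiction. Two ingredients are essential and missing from your plan: (a) the invariant must be the $n/(n-2)$-modulus of surface families, not the $n$-modulus of curve families joining continua; and (b) each shell must carry a decomposition tree of depth growing without bound (the paper plants a tree of depth $k$ in the $k$-th ball), so that the $2^k$ intersection count can force the Euclidean-side modulus to vanish — a single clasped pair per shell does not accumulate this exponential factor. Without these the argument cannot close.
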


It should be noted that, although the singular set of the metric $d$ consists only of one point $x_0\in \S^n$, the quasiconformal non-parametrization of the sphere $(\S^n,d)$ stems from the degeneration of the underlying Riemannian metric. Indeed, the metric $d$ we construct for Theorem \ref{thm:point} has the property that there is no quasiconformal homeomorphism $(\S^n\setminus \{x_0\},d) \to \S^n\setminus \{x_0\}$. We refer to Balogh and Koskela \cite{BaloghZ:Quaqua} for removability results for quasiconformal mappings to the positive direction in the metric setting.

The construction in Theorem \ref{thm:point} is based on Blankinship's necklace \cite{Blankinship}, a higher-dimensional analogue of Antoine's necklace which yields a wild Cantor set in $\S^n$. In the proof of Theorem \ref{thm:point} we use a modification of this construction for two rings, which we call the \emph{Bing--Blankinship construction} since it gives a generalization of Bing's double to higher dimensions. To obtain an almost smooth sphere with one singular point, we consider a sequence of partial Bing--Blankinship constructions of arbitrary length. 

The non-existence of a quasiconformal homeomorphism $(\S^n,d)\to \S^n$ is based on uniform modulus estimates for certain families of $(n-2)$-tori. These modulus estimates stem from uniform area estimates which replace Semmes's length estimates in \cite{SemmesS:Goomsw}. These area estimates are obtained by homological intersection counting in the spirit of Freedman and Skora \cite[Lemma 2.5]{FreedmanM:Strags}; see Proposition \ref{prop:vie} and Corollary \ref{cor:vie}. 

These modulus estimates, when applied to the decomposition space associated to the Bing--Blankinship construction, yield a sharp higher dimensional metric analog of Semmes's non-parametrizability result \cite{SemmesS:Goomsw} for metrics on $\S^3$. We discuss this result (Theorem \ref{thm:main}) and its relation to the result of Heinonen and Wu in Section \ref{sec:proof_main}.

This article is organized as follows. In Section \ref{sec:BB}, we discuss the Bing--Blankinship decomposition space $\S^n/\mathrm{BB}$ and show that $\S^n/\mathrm{BB}$ is homeomorphic to $\S^n$. We also construct the almost smooth metric $d$ in Theorem \ref{thm:main}. In Section \ref{sec:viemaps} we prove Freedman--Skora intersection results for the decomposition associated to $\S^n/\mathrm{BB}$. In Section \ref{sec:modulus} we discuss modulus estimates in $(\S^n,d)$ and in the Euclidean sphere $\S^n$ for families of $(n-2)$-tori associated to decomposition yielding $\S^n/\mathrm{BB}$. In Section \ref{sec:proof_main} we prove Theorem \ref{thm:main} and finally Theorem \ref{thm:point} in Section \ref{sec:proof}.

\subsection*{Acknowledgments} We thank Jang-Mei Wu for discussions on the shrinkability of the Bing--Blankinskip necklace.

\section{Preliminaries}
\label{sec:pre}

We begin this section with a general discussion on the metric theory of quasiconformal mappings and Loewner spaces. As a second topic we recall notions from point set topology related to decomposition spaces. We finish this section with a discussion on Semmes metrics on decomposition spaces. 

\subsection{Loewner spaces and quasiconformal maps}

A homeomorphism $f\colon X\to Y$ between metric spaces $(X,d_X)$ and $(Y,d_Y)$ is \emph{quasiconformal} if there exists $H<\infty$ satisfying
\begin{equation}
\label{eq:H}
\limsup_{r\to 0} \frac{\sup_{d_X(x,y)\le r} d_Y(f(x),f(y))}{\inf_{d_X(x,y)\ge r} d_Y(f(x),f(y))} \le H
\end{equation}
for every $x\in X$. A homeomorphism $f\colon X\to Y$ is \emph{$\eta$-quasisymmetric}, where $\eta\colon [0,\infty) \to [0,\infty)$ is a homeomorphism, if
\[
\frac{d_Y(f(x),f(y))}{d_Y(f(x),f(z))} \le \eta\left( \frac{d_X(x,y)}{d_X(x,z)}\right) 
\]
for all triples $x,y,z\in X$ of distinct points in $X$.

The spaces we consider in this article are Ahlfors $n$-regular and linearly locally contractible. A metric measure space $(X,d,\mu)$ is \emph{Ahlfors $n$-regular} if there exists a constant $C>0$ for which 
\[
\frac{1}{C} r^n \le \mu(B_X(x,r)) \le C r^n 
\]
for all open metric balls $B_X(x,r) = \{y \in X\colon d(x,y)<r\}$ of radius $0<r \le \diam{X}$ about $x$ in $X$. We call a metric space $(X,d)$ \emph{Ahlfors $n$-regular} if $(X,d,\mathcal{H}^n)$ is $n$-regular; here and in what follows $\mathcal{H}^n$ is the Hausdorff $n$-measure with respect to the metric $d$. Further, $X$ is \emph{linearly locally contractible} if there exists $C>0$ so that each ball $B_X(x,r)$ in $X$ is contractible in $B_X(x,Cr)$ for all $r<(\diam X)/C$.

Connected and orientable $n$-manifolds that are Ahlfors $n$-regular and linearly locally contractible support $(1,n)$-Poincar\'e inequality; see \cite[Theorem B.10]{SemmesS:Fincgs}. Thus, when proper, they are $n$-Loewner spaces by a result of Heinonen and Koskela \cite[Theorem 5.7]{HeinonenJ:Quamms}. Further, a quasiconformal homeomorphism between bounded Ahlfors $n$-regular spaces ($n>1$) is quasisymmetric if the domain is a Loewner space and the target linearly locally contractible \cite[Theorem 4.9]{HeinonenJ:Quamms}.

A space $(X,d,\mu)$ is \emph{$n$-Loewner} if there exists nonincreasing positive function $\phi \colon (0,\infty) \to (0,\infty)$ such that
\begin{equation}
\label{eq:Loewner}
\Mod_n(\Gamma(E,F)) \geq \phi(t) > 0
\end{equation}
whenever $E$ and $F$ are two disjoint, non-degenerate continua in $X$ and
\[
t \geq \frac{\dist(E,F)}{\min(\diam{E}, \diam{F})}.
\]
Here $\Mod_n (\Gamma(E,F))$ is the $n$-modulus of the family $\Gamma(E,F)$ of all paths connecting $E$ and $F$. 

Recall that the \emph{$p$-modulus} $\Mod_p(\Gamma)$, for $p\ge 1$, of a path family $\Gamma$ in $(X,d,\mu)$ is 
\[
\Mod_p(\Gamma) = \inf \int_X \rho^p \, \mathrm{d}\mu,
\]
where the infimum is taken over all non-negative Borel functions $\rho \colon X \to [0,\infty]$ satisfying
\[
\int_\gamma \rho \,\mathrm{d}s \geq 1
\] 
for all locally rectifiable paths $\gamma\in \Gamma$.
 
More generally, given a family $\cS$ of $l$-manifolds (possibly with boundary) in $X$, where $l \in  \{1,\ldots, n-1\}$, the \emph{$p$-modulus} $\Mod_p(\cS)$ of $\cS$ is 
\[
\Mod_p(\cS) = \inf \int_X \rho^p \, \mathrm{d}\mu,
\]
where the infimum is taken over all non-negative Borel functions $\rho\colon X\to [0,\infty]$ satisfying
\begin{equation}\label{eq:admis}
\int_{S} \rho(x) \, \mathrm{d}\mathcal{H}^l \ge 1
\end{equation}
for all $S\in \cS$.
A function $\rho$ satisfying \eqref{eq:admis} is called an \emph{admissible function for $\cS$}. 

The proofs of Theorems \ref{thm:point} and \ref{thm:main} are based on the quasi-invariance of the $n/(n-2)$-modulus of a family of $(n-2)$-manifolds. More precisely, we consider the modulus of a family $\Sigma = \{ \{x\}\times (\S^1)^{n-2} \subset \B^2\times (\S^1)^{n-2} \colon x\in \Omega\}$ in $\B^2\times (\S^1)^{n-2}$, where $\Omega\subset \B^2$ is a neighborhood of the boundary of $\B^2$. Given a quasiconformal map $f\colon \Omega\times (\S^1)^{n-2}\to U$, where $U\subset \R^n$ is a domain, we have
\begin{equation}
\label{eq:Sigma_Mod}
\frac{1}{C_0} \Mod_{\frac{n}{n-2}}(\Sigma) \le \Mod_{\frac{n}{n-2}}(f\Sigma) \le C_0 \Mod_{\frac{n}{n-2}}(\Sigma),
\end{equation}
where $C_0=C_0(n,H)>0$ depends only on $n$ and the quasiconformality constant $H$ of $f$; see \cite[Theorem 6]{AgardS:Quamm}. The $n/(n-2)$-modulus of $\Sigma$ is conformally invariant and is therefore called the \emph{conformal modulus of $\Sigma$}. 

\begin{rem}
We note, in passing, that our definition for the $p$-modulus of a family of $l$-manifolds is slightly more restrictive than the definition given in Agard \cite{AgardS:Quamm} as we have used the Hausdorff $l$-measure in the definition of admissbile functions in place of more general $l$-dimensional measure. We also refer to Rajala \cite{RajalaK:Surfbb} for a definition of modulus for more general families of geometric sets.
\end{rem}

\subsection{Decomposition spaces, initial packages, and defining sequences}
\label{sec:Dec}

We introduce now the topological notions of a decomposition space and a defining sequence which will be used throughout the article. The defining sequences we consider are induced by (Semmes's) initial packages; see \cite[Section 2]{SemmesS:Goomsw} for a detailed discussion on initial packages. 

A \emph{decomposition} of topological space $X$ is a partition of $X$. The partitions $G$ we consider are \emph{upper semi continuous (usc)}, that is, elements of $G$ are closed subsets of $X$ and for each $g\in G$ and every neighborhood $U$ of $g$ in $X$ there exists a neighborhood $V$ of $g$ contained in $U$ so that every $g'\in G$ intersecting $V$ is contained in $U$. A \emph{defining sequence} $\cX=(X_k)_{k\ge 0}$ for the decomposition $G$ is a decreasing sequence of closed sets $X$ for which the components of $\bigcap_{k\ge 0} X_k$ are exactly the non-degenerated elements in $G$, that is, elements $g\in G$ which are not points. 

The \emph{decomposition space $X/G$ associated to $G$} is the quotient space with the quotient topology induced by the canonical map $X\to X/G$; for usc decompositions, $X/G$ is a metrizable space. We refer to Daverman \cite{DavermanR:Decm} for a detailed discussion on decomposition spaces.

A tuple $\fI=(M; \varphi_1,\ldots, \varphi_p)$ is a \emph{smooth initial package} if $M$ is a compact manifold with boundary and each $\varphi_i \colon M \to M$ is a smooth embedding with the property that images $\varphi_i(M)$ are pair-wise disjoint. Initial packages $\fI = (M;\varphi_1,\ldots, \varphi_p)$ and $\fI'=(M';\varphi'_1,\ldots, \varphi'_p)$ are \emph{equivalent} if there exists a diffeomorphism $\beta \colon M\to M'$ for which $\varphi'_i = \beta \circ \varphi_i \circ \beta^{-1}$; we say that diffeomorphism $\beta$ \emph{conjugates $\fI$ and $\fI'$}. 

Each initial package $\fI=(M;\varphi_1,\ldots, \varphi_p)$ induces a natural tree ordered by inclusion. More precisely, let $\cW_p$ be the set of all finite words in alphabet $\{1,\ldots,p\}$. For each $w=i_1\cdots i_k\in \cW_p$, let $\varphi_w \colon M\to M$ be the embedding $\varphi_w := \varphi_{i_1}\circ \cdots \circ \varphi_{i_k}$; for $w=\emptyset$, we set $\varphi_{\emptyset}=\id$. Then $\varphi_w(M) \supset \varphi_{wi}(M)$ for each $i=1,\ldots, p$ and $w\in \cW_p$. We call the ordered tree $\cT_\fI = (\varphi_w(M))_{w\in \cW_p}$ the \emph{defining tree of $\fI$}.

The \emph{defining sequence for $\fI$} is the sequence $\cX_\fI= (\cX_{\fI,k})_{k\ge 0}$ where $\cX_{\fI,k} =  \bigcup_{|w|=k} \varphi_w(M)$ and $|w|$ is the length of the word $w$. Note that, by construction, for each $w\in \cW_p$ of length $k$, the set $\varphi_w(M)$ is a component of $\cX_{\fI,k}$. The \emph{decomposition $G_\fI$ of $M$} is the decomposition associated to the defining sequence $\cX_\fI$. We denote the set $\bigcap_k \cX_{\fI,k}$, the \emph{singular set of the initial package $\fI$}, by $\cS(\fI)$.

In forthcoming sections we consider defining sequences, which are not defining sequences for initial packages, but are topologically equivalent to such defining sequences. For this purpose, we say that $\cM=(M_w)_{w\in \cW_p}$ is an \emph{ordered tree} if $M_{wi}\subset M_w$ for each $w\in \cW_p$ and $i=1,\ldots, p$. Ordered trees $\cM=(M_w)_{w\in \cW_p}$ and $\cM'=(M'_w)_{w\in \cW_p}$ are \emph{equivalent} if there exists a homeomorphism $\theta \colon M_\emptyset \to M'_\emptyset$ satisfying $\theta(M_w)=M'_w$ for each $w\in \cW_p$. Further, we say that an ordered tree $\cM$ is \emph{equivalent to the initial package $\fI$} if $\cM$ is equivalent to the tree $\cT_\fI$.

\begin{convention}
Let $(M;\varphi_1,\ldots, \varphi_p)$ be an initial package,  $E\subset M$ a set, and let $w\in \cW_p$ be a word. Then $E_w\subset M$ is the set $E_w = \varphi_w(E)$ unless otherwise specified. 
\end{convention}

\subsection{Semmes metrics}

The metric $d$ on $\S^n$ in Theorem \ref{thm:point} stems from a construction of a quasi-self-similar metric on a decomposition space of $\S^n$. These metrics are introduced in \cite{SemmesS:Goomsw} and called Semmes metrics in \cite{PW}. They are defined as follows.
 
A metric $d$ on a decomposition space $M/G_{\fI}$ associated to an initial package $\fI$ is a \emph{Semmes metric} if there exists $\lambda >0$ and $L\ge 1$ for which 
\[
\frac{\lambda^k}{L} d(x,y) \le d(\varphi_w(x),\varphi_w(y)) \le L\lambda^k d(x,y)
\]
for each $x,y\in M/G_{\fI}$ and each word $w = i_1\cdots i_k$; we call the metric space $(M/G_{\fI}, d)$ a \emph{(self-similar)} Semmes space and $\lambda$ the \emph{scaling constant of the metric $d$}. We refer to \cite[Section 7]{PW} for Semmes metrics and Semmes spaces associated to non-self-similar decomposition spaces.

Metric spaces $(M/G_{\fI},d)$ are Ahlfors $n$-regular and LLC under mild conditions on the metric $d$ and the initial package $\fI$. We record these facts as lemmas. The proofs are minor variations of the proofs of \cite[Proposition 7.8]{PW} and \cite[Proposition 7.9]{PW}, respectively.

\begin{lem}
\label{lem:Ahlfors}
Let $M$ be an $n$-manifold with boundary for $n\ge 3$, $\fI=(M;\varphi_1,\ldots, \varphi_p)$ an initial package, and $d$ a Semmes metric on $M/G_{\fI}$ with a scaling constant $\lambda \in (0,p^{-1/n})$. Then $(M/G_{\fI},d)$ is Ahlfors $n$-regular.
\end{lem}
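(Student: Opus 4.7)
The plan is to bootstrap Ahlfors $n$-regularity of $(M/G_{\fI},d)$ from the manifold structure of $M$ using the self-similar scaling encoded in the Semmes condition, along the lines of the proof of \cite[Proposition 7.8]{PW}. The basic observation is that, by the Semmes scaling, each composition $\varphi_w\colon M/G_{\fI}\to M/G_{\fI}$ is $L\lambda^{|w|}$-Lipschitz, hence
$$\haus^n(\varphi_w(E))\le L^n\lambda^{n|w|}\haus^n(E)$$
for every Borel $E\subset M/G_{\fI}$.

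\emph{Step 1: Finiteness and positivity of $\haus^n(M/G_{\fI})$.} Decomposing
$$M/G_{\fI}=(M\setminus \cX_{\fI,k})\cup \bigcup_{|w|=k}\varphi_w(M/G_{\fI})$$
and applying the scaling bound to each of the $p^k$ pieces yields
$$\haus^n(M/G_{\fI})\le \haus^n(M\setminus \cX_{\fI,k})+L^n(p\lambda^n)^k\haus^n(M/G_{\fI}).$$
Since $p\lambda^n<1$, I would choose $k$ so that $L^n(p\lambda^n)^k\le\tfrac{1}{2}$ to absorb the right-hand term, obtaining $\haus^n(M/G_{\fI})\le 2\haus^n(M\setminus \cX_{\fI,k})$. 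The latter is finite because on the precompact open set $M\setminus \cX_{\fI,k}\subset M$ the Semmes metric $d$ is locally bi-Lipschitz to a smooth Riemannian metric on the manifold $M$. Positivity follows by the same token: any small topological $n$-ball sitting inside the regular part has positive $\haus^n$-measure.

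\emph{Step 2: Upper bound $\haus^n(B(x,r))\le Cr^n$.} For $x\in M/G_{\fI}$ and $0<r<\diam(M/G_{\fI})$, choose $k\ge 0$ with $\lambda^{k+1}\le r<\lambda^k$. Uniform scaling of $d$ together with the topological tree structure of $\fI$ ensures that $B(x,r)$ meets a bounded number $N=N(\fI,d)$ of level-$k$ pieces $\varphi_w(M)$, each of which has $\haus^n$-measure at most $L^n\lambda^{nk}\haus^n(M/G_{\fI})\asymp r^n$; thus $\haus^n(B(x,r))\lesssim r^n$.

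\emph{Step 3: Lower bound $\haus^n(B(x,r))\ge cr^n$.} Descending the tree $\cT_{\fI}$ starting at $x$, one identifies a word $w$ with $|w|\asymp\log(1/r)/\log(1/\lambda)$ such that $\varphi_w(M)\subset B(x,r)$ and $\diam(\varphi_w(M))\asymp r$. The scaling lower bound $\haus^n(\varphi_w(M))\ge L^{-n}\lambda^{n|w|}\haus^n(M/G_{\fI})\gtrsim r^n$ completes the estimate.

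The main obstacle I expect is Step~1: showing that the iterative inequality closes up to yield finiteness. The condition $\lambda<p^{-1/n}$ is used precisely here, ensuring that the exponential growth of the number $p^k$ of level-$k$ pieces is outweighed by the $\lambda^{nk}$-scaling of their measure, so that the geometric series $\sum L^n(p\lambda^n)^k$ controls the contribution of the fractal part against the finite manifold contribution from $M\setminus \cX_{\fI,k}$. Steps~2 and~3 are then bookkeeping through the defining tree and the scaling axiom.
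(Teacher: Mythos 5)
Your overall framework is the right one — decompose along the defining tree, use the Semmes scaling to compare the measure of a level-$k$ piece to that of the whole space, and exploit $p\lambda^n<1$ for summability — and that is indeed what drives the proof of \cite[Proposition 7.8]{PW} that the paper invokes. However, there are genuine gaps in all three steps.

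In Step 1, the inequality $\haus^n(M/G_{\fI})\le \haus^n(M\setminus \cX_{\fI,k})+L^n(p\lambda^n)^k\haus^n(M/G_{\fI})$ cannot be "absorbed" unless you already know $\haus^n(M/G_{\fI})<\infty$: if the total measure were infinite you would only obtain $\infty\le\infty$. The fix is easy but should be stated: telescope, bounding $\haus^n(M\setminus\cX_{\fI,k+1})-\haus^n(M\setminus\cX_{\fI,k})=\sum_{|w|=k}\haus^n(\varphi_w(M\setminus\cX_{\fI,1}))\le L^n(p\lambda^n)^k\,\haus^n(M\setminus\cX_{\fI,1})$, and sum the geometric series; separately, $\cS(\fI)$ is covered by $p^k$ sets of $d$-diameter $\lesssim\lambda^k$, so $\haus^n(\cS(\fI))\lesssim(p\lambda^n)^k\to 0$.

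The more serious gaps are in Steps 2 and 3, where your argument only treats balls centered near the singular set. In Step 2, after fixing $k$ with $\lambda^{k+1}\le r<\lambda^k$, you bound only the measure of $B(x,r)\cap\cX_{\fI,k}$; the portion $B(x,r)\setminus\cX_{\fI,k}$ in the regular part is never addressed, and in fact if $x$ lies deep in $M\setminus\cX_{\fI,1}$ and $r$ is small, $B(x,r)$ meets no level-$k$ piece at all, making your estimate vacuous. In Step 3, "descending the tree starting at $x$" presupposes $x\in\cS(\fI)$ (or at least $\dist_d(x,\cS(\fI))\lesssim r$); for $x$ in the regular part and $r$ small there is no piece $\varphi_w(M)$ inside $B(x,r)$, so this gives no lower bound. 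What is missing is the second half of the dichotomy: when $B(x,r)$ sits in the regular part at a definite relative scale, one must use that $d$ restricted to $\varphi_w(M\setminus\interior\cX_{\fI,1})$ is, via the Semmes scaling, uniformly bi-Lipschitz to a fixed Riemannian "fundamental domain" independently of $w$, so local Riemannian volume estimates apply with constants uniform over all levels of the tree. The actual proof then interpolates between this regular-part estimate and your fractal-part estimate according to how $r$ compares to $\dist_d(x,\cS(\fI))$. You gesture at the bi-Lipschitz-Riemannian structure in Step 1 for finiteness, but do not carry it through to the pointwise two-sided bounds where it is actually needed.
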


\begin{lem}
\label{lem:LLC}
Let $M$ be an $n$-manifold with boundary for $n\ge 3$, $\fI=(M,\varphi_1,\ldots, \varphi_p)$ an initial package, and $d$ a Semmes metric on $M/G_{\fI}$. Suppose $\varphi_{w i}(M)$ contracts in $\varphi_w(M)$ for each $w\in \cW_p$ and $i\in \{1,\ldots,p\}$. Then $(M/G_{\fI},d)$ is linearly locally contractible.
\end{lem}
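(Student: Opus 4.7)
The argument exploits the self-similarity of $d$ to reduce local contractibility at arbitrary scales to finitely many contractions at the top scale. Write $\tilde X = M/G_{\fI}$, let $\pi \colon M \to \tilde X$ be the quotient map, put $X_w = \pi(\varphi_w(M))$ for $w \in \cW_p$, and set $D = \diam \tilde X$.

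First I would package the top-scale data. For each $i \in \{1,\ldots,p\}$ the hypothesis, applied with $w = \emptyset$, supplies a continuous contraction of $X_i$ to a point inside $X_\emptyset = \tilde X$; let $D_0 \le D$ bound the $d$-diameters of the images of these finitely many contractions. Conjugating by the Semmes bi-Lipschitz scaling $\varphi_w \colon \tilde X \to X_w$, whose distortion constants are $\lambda^{|w|}/L$ and $L\lambda^{|w|}$, yields, for every word $wi$, a contraction of $X_{wi}$ inside $X_w$ whose image has $d$-diameter at most $L^2 \lambda^{|w|} D_0$.

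Given a ball $B(x,r)$ with $r$ smaller than a fixed multiple of $D$, I would let $k$ be the largest integer for which some $X_w$ with $|w| = k$ contains $B(x,r)$; such $k$ exists since $X_\emptyset = \tilde X$ and is finite since $\diam X_w \le L\lambda^{|w|} D \to 0$. Fix such a word $w_0$ and let $w_0'$ be $w_0$ with its last letter removed. The scaled contraction constructed above sends $B(x,r) \subset X_{w_0}$ to a point through a homotopy whose image lies in $X_{w_0'}$ and has $d$-diameter at most $L^2 \lambda^{k-1} D_0$. Since $x$ belongs to this image, the whole homotopy takes place in $B(x, L^2 \lambda^{k-1} D_0)$. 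The Semmes upper estimate applied to $X_{w_0} \supset B(x,r)$ gives $\lambda^k D \ge 2r/L$; a companion bound $\lambda^k D \le C r$ then completes the proof with an LLC constant depending only on $L$, $\lambda$, $D_0/D$, and $p$.

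\textbf{Main obstacle.} The crux of the argument is the upper estimate $\lambda^k \lesssim r$. The maximality of $k$ only ensures that $B(x,r)$ fails to fit in any child $X_{w_0 j}$, which can happen even for very small $r$ when $x$ lies near the topological gap $X_{w_0} \setminus \bigcup_j X_{w_0 j}$. Following the template of the proof of \cite[Proposition 7.9]{PW}, I would handle this by combining the scaled contractions of the boundedly many children $X_{w_0 j}$ that meet $B(x,r)$ with a local contraction of the residual part of $B(x,r)$ inside the smooth manifold $M \setminus \cS(\fI)$, amalgamating them into a single homotopy whose image remains inside $B(x, C r)$.
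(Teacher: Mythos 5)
Your overall strategy matches what the paper points to (a variant of \cite[Proposition 7.9]{PW}): transport the finitely many top-level contractions by the Semmes self-similarity, and compare the scale $\lambda^{|w_0|}$ of the deepest containing tube $X_{w_0}$ to $r$. You also correctly flag the real difficulty --- that $B(x,r)$ can fail to fit inside any child of $X_{w_0}$ even when $r\ll\lambda^{|w_0|}$. But the fix you sketch does not close this gap. If $r\ll\lambda^{|w_0|}$, the scaled contraction of any child $X_{w_0 j}$ has image of $d$-diameter on the order of $L\lambda^{|w_0|-1}D_0$, which is far larger than $r$, and restricting it to $B(x,r)\cap X_{w_0 j}$ does not shrink the image. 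So ``amalgamating'' these with a local contraction of the residual part cannot produce a homotopy confined to $B(x,Cr)$: the child contractions already spill out of a ball of radius comparable to $r$. (A further minor imprecision: the hypothesis is about contracting $\varphi_{wi}(M)$ inside $\varphi_w(M)$ in $M$ itself; one still has to argue such a homotopy, pushed to the quotient, factors through $X_{wi}$, which does not follow for free.)

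What one actually needs is a dichotomy driven by $\dist_d(x,\cS(\fI))$. If $\dist_d(x,\cS(\fI))<r/2$, pick $y\in\cS(\fI)\cap B(x,r/2)$ and the nested chain $X_{\omega_1}\supset X_{\omega_2}\supset\cdots$ through $y$; maximality of $|w_0|=k$ forces $w_0=\omega_k$ and gives a point of $B(x,r)$ outside $X_{\omega_{k+1}}$, whence $\dist_d(y,\partial X_{\omega_{k+1}})<2r$, while the Semmes bounds and $y\in X_{\omega_{k+2}}$ give $\dist_d(y,\partial X_{\omega_{k+1}})\ge c\lambda^{k+1}$. Hence $\lambda^{k}\lesssim r$, and your basic scaled-contraction step does finish the job in this case. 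If instead $\dist_d(x,\cS(\fI))\ge r/2$, one should not use transported contractions at all; rather $B(x,r)$ sits inside a region which, after rescaling by $\lambda^{-m}$ with $\lambda^m\approx\dist_d(x,\cS(\fI))$, is uniformly bi-Lipschitz to a fixed compact piece of the smooth Riemannian manifold $M\setminus\bigcup_i\interior\varphi_i(M)$ (with a collar of $\partial\varphi_i(M)$), and one invokes the uniform local contractibility of that fixed compact piece. This second case uses the $n$-manifold hypothesis in an essential way and is not a scaled instance of the $\pi_1$-type contraction hypothesis; leaving it to an unspecified ``amalgamation'' is where the proposal falls short.
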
 

\section{Bing--Blankinship spheres}
\label{sec:BB}

In this section, we introduce first the construction of the Bing--Blankinship necklace which yields the decomposition space $\S^n/\mathrm{BB}$. In the construction we present in this, we combine the idea of Blankinship in \cite{Blankinship} based on Antoine's necklace with a contruction of Bing in \cite{BingR:Homb3s}. We also adapt Bing's method to show that the space $\S^n/\mathrm{BB}$ is homeomorphic to $\S^n$.

\subsection{The decomposition space $\S^n/\mathrm{BB}$}
 
Let $n \ge  3$ and $\psi \colon (\S^1)^{n-2}\to (\S^1)^{n-2}$ be the cyclic permutation 
\[
(x_1,x_2,\ldots,x_{n-2}) \mapsto (x_{n-2},x_1,x_2,\ldots, x_{n-3}),
\]
where we understand $\psi = \id$ for $n=3$. Let also $\fI_B=(\B^2\times \S^1; \varphi_1,\varphi_2)$ be an initial package for the Bing double; see Figure \ref{fig:Bing}. 

\begin{figure}[h!]
\includegraphics[scale=0.55]{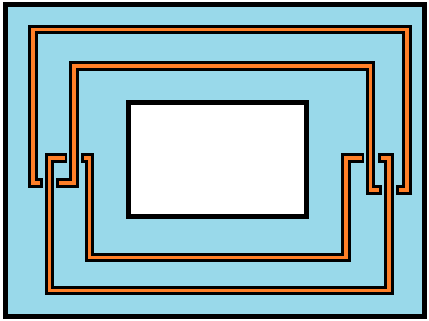}
\caption{The Bing double.}

\label{fig:Bing}
\end{figure}

A generalization of the initial package $\fI_B$ to dimension $n$ is 
\[
\fI_{B,n} = (\B^2\times (\S^1)^{n-2}; \tilde \varphi_1,\tilde \varphi_2),
\]
where 
\[
\tilde \varphi_i = (\varphi_i \times \id_{(\S^1)^{n-3}}) \circ (\id_{\B^2}\times\psi).
\]
We call the initial package $\fI_{B,n}$ the \emph{Bing--Blankinship package}; note that $\fI_{B,3} =  \fI_B$. As in \cite{Blankinship}, we call a space homeomorphic to $\T = \B^2\times (\S^1)^{n-2}$ an \emph{$n$-tube}. We call $n$-tubes 
\[
\T_w=\tilde \varphi_w(\B^2\times (\S^1)^{n-2})
\]
for $w\in \cW_2$, \emph{Blankinship rings}.

To obtain a decomposition of $\S^n$, we fix a smooth embedding $\vartheta \colon \T \to \S^n$ for which $\vartheta(\T)\subset \R^n\subset \S^n$ and there exists $x_0\in \S^1$ satisfying
\begin{itemize}
\item[(i)] $\vartheta(\B^2\times \S^1\times \{x_0\}^{n-3})\subset \R^3\times \{0\}^{n-3}$,
\item[(ii)] $\vartheta(\varphi_i(\B^2\times \S^1)\times\{x_0\}^{n-3}) \subset \R^3\times \{0\}^{n-3}$ for $i=1,2$, and
\item[(iii)] $\vartheta \circ (\id_{\B^2}\times \psi^k) = (\id_{\R^2} \times \psi')^k \circ \vartheta$ for each $k\in \N$, where $\psi' \colon \R^{n-2} \to \R^{n-2}$ is the cyclic permutation $(x_1,\ldots, x_{n-2}) \mapsto (x_{n-2},x_1,\ldots, x_{n-3})$.
 
\end{itemize}

The decomposition space $\S^n/\mathrm{BB}$ is now the decomposition space obtained by collapsing the components of $\vartheta(\cS(\fI_{B,n}))$. Thus there exists a natural embedding $\vartheta' \colon \T/G_{\fI_{B,n}} \to \S^n/\mathrm{BB}$ satisfying
\[
\xymatrix{
\T \ar[r]^{\vartheta} \ar[d] & \S^n \ar[d] \\
\T/G_{\fI_{B,n}} \ar[r]^{\vartheta'} & \S^n/\mathrm{BB}
}
\]
where vertical arrows are canonical maps $x \mapsto [x]$.

In the following two sections, we show that the space $\S^n/\mathrm{BB}$ admits a good embedding into the Euclidean sphere $\S^{n+1}$ and that $\S^n/\mathrm{BB}$ is topologically an $n$-sphere. Using this two observations, we construct in Section \ref{sec:almost_smooth_metric} an almost smooth metric on the standard $n$-sphere $\S^n$ having the Bing--Blankinship necklace as the singular set.

\subsection{Embedding of $\S^n/\mathrm{BB}$ into $\S^{n+1}$}
 
The space $\S^n/\mathrm{BB}$ admits a embedding into $\S^{n+1}$, which is modular in the terminology of \cite{PW}.

\begin{prop}
\label{prop:BB_Semmes}
Let $n\ge 3$ and $\lambda\in (0,1)$. Then there exists a map $\tilde \rho \colon \S^n \to \S^{n+1}$ with the following properties:
\begin{itemize}
\item[(i)] $\tilde\rho|\left(\S^n\setminus \cS(\fI_{B,n})\right) \colon \S^n\setminus \cS(\fI_{B,n}) \to \S^{n+1}$ is a smooth embedding for which $\tilde \rho(x)=x$ for every $x\in \S^n\setminus \vartheta(\T)$, and
\item[(ii)] there exists $L\ge 1$ so that, for each word $w\in \cW_2$, the map $\tilde \rho_w = \tilde \rho \circ \vartheta \circ \tilde\varphi_w \circ \vartheta^{-1}|\vartheta(\T\setminus (\T_1\cup \T_2)) \colon \vartheta(\T\setminus (\T_1\cup \T_2)) \to \S^{n+1}$ satisfies
\[
\frac{\lambda^{|w|}}{L} |x-y| \le |\tilde \rho_w(x)-\tilde \rho_w(y)| \le L \lambda^{|w|}|x-y|
\]
for all $x,y\in \vartheta(\T\setminus (\T_1\cup \T_2))$.
\end{itemize}
In particular, $\tilde \rho(\cS(\fI_{B,n}))$ is a Cantor set in $\S^{n+1}$ and there exists an embedding $\S^n/\mathrm{BB}\to \S^{n+1}$ for which the diagram 
\[
\xymatrix{
\S^n \ar[rr]^{\tilde\rho} \ar[dr]^{x\mapsto [x]} && \S^{n+1} \\
& \S^n/\mathrm{BB} \ar[ur] & }
\]
commutes.
\end{prop}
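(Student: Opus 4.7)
My plan is to build $\tilde\rho$ as a Semmes-type self-similar modular embedding in the spirit of \cite[Section 7]{PW}, using the extra codimension afforded by $\S^{n+1}$ to resolve the Bing--Blankinship linking at each level of the decomposition tree.

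\emph{Step 1 (base shell embedding).} First I would fix similarities $\Psi_1,\Psi_2\colon \S^{n+1}\to \S^{n+1}$ of ratio $\lambda$ whose images $\Psi_i(\S^n)$ are disjoint from each other and placed near $\vartheta(\T_1)$ and $\vartheta(\T_2)$ respectively, slightly off the equatorial $\S^n$. I would then construct a smooth embedding $\eta\colon \vartheta(\T)\to \S^{n+1}$ which equals $\id$ near $\partial\vartheta(\T)$ and which, on each $\vartheta(\partial\T_i)$, matches the scaled boundary map $\Psi_i\circ \vartheta\circ \tilde\varphi_i^{-1}\circ \vartheta^{-1}$. The existence of such $\eta$ comes from a smooth ambient isotopy of $\S^{n+1}$ supported near $\vartheta(\T_1\cup\T_2)$: in codimension one the Blankinship linking of $\vartheta(\T_1)$ and $\vartheta(\T_2)$ becomes trivial, so each sub-ring can be rescaled to a $\lambda$-copy of $\vartheta(\T)$ and pushed off the equator independently, with the interpolation on the fundamental shell $\vartheta(\T\setminus(\T_1\cup \T_2))$ provided by the isotopy.

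\emph{Step 2 (self-similar extension).} For each word $w=i_1\cdots i_k\in \cW_2$ set $\Psi_w=\Psi_{i_1}\circ\cdots\circ \Psi_{i_k}$, a similarity of ratio $\lambda^{|w|}$. On each shell $\vartheta(\tilde\varphi_w(\T\setminus(\T_1\cup\T_2)))$ define
\[
\tilde\rho = \Psi_w\circ \eta\circ \vartheta\circ \tilde\varphi_w^{-1}\circ \vartheta^{-1},
\]
and set $\tilde\rho=\id$ on $\S^n\setminus \vartheta(\T)$. The boundary-matching of $\eta$ in Step~1 implies that the pieces at consecutive levels $w$ and $wi$ agree on the common boundary $\vartheta(\tilde\varphi_{wi}(\partial \T))$, so $\tilde\rho$ is a smooth embedding on $\S^n\setminus \cS(\fI_{B,n})$. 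A direct computation yields $\tilde\rho_w = \Psi_w\circ \eta|\vartheta(\T\setminus(\T_1\cup\T_2))$, so the bilipschitz bound (ii) follows with $L$ equal to the bilipschitz constant of $\eta|\vartheta(\T\setminus(\T_1\cup\T_2))$.

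\emph{Step 3 (singular set, injectivity, Cantor image).} The bound $\diam\tilde\rho(\vartheta(\tilde\varphi_w(\T)))\le L'\lambda^{|w|}$ with $L'$ independent of $w$ shows that for every infinite word $i_1 i_2\cdots$ the nested images shrink to a single point; I declare $\tilde\rho$ to take that value on the corresponding component of $\cS(\fI_{B,n})$. The resulting extension is continuous. Injectivity off $\cS(\fI_{B,n})$ follows from the disjointness of the images $\Psi_w(\eta(\vartheta(\T)))$ over incomparable $w$, itself inherited from the disjoint placements of the $\Psi_i(\S^n)$ in Step~1 and propagated by induction; distinct infinite words yield distinct limit points by the same geometric separation. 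Consequently $\tilde\rho(\cS(\fI_{B,n}))$ is a Cantor set in $\S^{n+1}$ and $\tilde\rho$ factors through an embedding $\S^n/\mathrm{BB}\hookrightarrow \S^{n+1}$ as required.

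The main obstacle is Step~1. The Blankinship linking of $\vartheta(\T_1)$ and $\vartheta(\T_2)$ inside $\vartheta(\T)\subset \S^n$ is a genuine obstruction: no isotopy of $\S^n$ rel $\partial\vartheta(\T)$ can shrink either sub-ring to a $\lambda$-scaled similar copy of $\vartheta(\T)$. The codimension-one room in $\S^{n+1}$ is precisely what removes this obstruction. In addition, one must arrange that the cyclic permutation factor $\id_{\B^2}\times\psi$ built into $\tilde\varphi_i$ is compatible with $\Psi_i$ being a \emph{genuine} Euclidean similarity, so that the recursion in Step~2 produces a uniform-bilipschitz (rather than merely uniform-quasiconformal) self-similar embedding; this is what property (iii) of the embedding $\vartheta$ is designed to ensure.
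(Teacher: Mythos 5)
Your proposal follows essentially the same route as the paper's sketch: a self-similar Semmes embedding, built from a single base-shell map and extended level by level via $\lambda$-similarities, with the codimension-one unlinking of the Bing--Blankinship rings as the key input for the base step. The paper organizes this as a sequence of diffeomorphisms $G_k$ of a box $U\subset\R^{n+1}$ rather than shell-by-shell gluing, and is slightly more explicit that Step~1 is \emph{not} a general-position fact but rests on Semmes's re-embedding $\theta\colon C\cap\R^3\to\R^4$ applied to the $3$-dimensional slice singled out by properties (i)--(iii) of $\vartheta$ and then propagated across the remaining $\S^1$-factors --- the very compatibility you flag at the end of your write-up.
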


The proof of Proposition \ref{prop:BB_Semmes} is a minor modification of the argument of Semmes in \cite[Lemma 3.21]{SemmesS:Goomsw} and we merely sketch the proof; see also \cite[Section 6]{PW} for a similar construction. 

\begin{proof}[Sketch of a proof of Proposition \ref{prop:BB_Semmes}]
Let $T_w = \vartheta(\T_w)$ for each $w\in \cW_2$. Let also $\tilde \varphi'_i = \vartheta \circ \tilde\varphi_i \circ \vartheta^{-1}|T_\emptyset$ for $i=1,2$. Then $\fI_{B,n}'=(T_\emptyset; \tilde \varphi'_1,\tilde \varphi'_2)$ is an initial package.

The construction of the map $\tilde \rho$ is self-similar with respect to the initial package $\fI_{B,n}'$, and we describe only the first step. We may assume that $T=T_\emptyset \subset \R^n \subset \S^n$. Let $\mu = \sum_{k=0}^\infty \lambda^k$ and let $B\subset \R^n$ be a Euclidean ball containing $T$ for which $\dist(T,\R^n\setminus B) \ge 2\mu$. For each $w\in \cW_2$ we also denote by $C_w$ the cylinder $T_w \times [-\lambda^{|w|}, \lambda^{|w|}] \subset \R^{n+1}$. For simplicity, we denote $C=C_\emptyset$ and $U = B\times [-2\mu,2\mu]$.

Let $g_i \colon C \to \R^{n+1}$, for $i=1,2$, be $\lambda$-similarities for which $g_i(T)\subset \R^n\times \{\lambda\}$ and the images $g_1(T)$ and $g_2(T)$ are pair-wise disjoint. Then there exists a diffeomorphism $G_1 \colon U \to U$ for which $G_1|\partial U = \id$, $G_1|\partial T = \id$ and $G_1(\tilde \varphi'_i(x),\lambda s) = g_i(x,s)$ for $(x,s)\in C$. The existence of $G_1$ follows from Semmes's unlinking argument in dimension $3$. 

Indeed, recall that we have assumed that there exists $x_0\in \S^1$ for which $t=\vartheta(\B^2\times \S^1 \times \{x_0\}^{n-3})$ and $t_i = \vartheta(\tilde \varphi_i(\B^2\times \S^1\times\{x_0\}^{n-3}))$ for $i=1,2$. Let now $\theta = (\theta_1,\theta_2,\theta_3,\theta_4) \colon C\cap \R^3\to \R^4$ be Semmes's re-embedding (see \cite[Definition 3.2]{SemmesS:Goomsw}) unlinking $t_1$ and $t_2$ in $\R^4$. The diffeomorphism $G_1$ is now obtained by extending the embedding $C\cap \R^3\to \R^{n+1}$, $x\mapsto (\theta_1(x),\theta_2(x),\theta_3(x),x_4,\ldots, x_n, \theta_4(x))$. We leave the technical details to the interested reader and merely refer to Semmes's isotopy extension lemma \cite[Lemma 4.1]{SemmesS:Goomsw}. 

The construction of the diffeomorphism can now be iterated in $G_1(C_1\cup C_2)$ to obtain, for each $k$, a diffeomorphism $G_k\colon U \to U$ satisfying
\[
G_k|\left(U \setminus \bigcup_{|w|=k-1}C_w\right) = G_{k-1}|\left(U\setminus \bigcup_{|w|=k-1} C_w\right),
\]
and for which, for each $w\in \cW_2$ of length $k$, holds that
\begin{itemize}
\item[(a)] $G_k|C_w$ is a $\lambda$-similarity, 
\item[(b)] $G_k(\tilde \varphi'_w(x),\lambda^k t) = g_w(x,t)$ for $(x,t)\in C$, and 
\item[(c)] $G_k(\tilde\varphi'_w(x),0) \subset \R^n\times \{\lambda_k\}$, where $\lambda_k = \sum_{i=0}^k \lambda^i$. 
\end{itemize}
We refer to \cite[Lemma 3.21]{SemmesS:Goomsw}, or \cite[Section 6]{PW}, for more details.
\end{proof}

\subsection{The space $\S^n/\mathrm{BB}$ is an $n$-sphere}
\label{sec:Bing}

We show now that $\S^n/\mathrm{BB}$ is homeomorphic to $\S^n$;  see e.g. DeGryse--Osborne \cite{DeGryseD:WilCs}. For the purposes of our main theorem (Theorem \ref{thm:point}), we emphasize the smoothness properties of this homeomorphism and formulate the result as follows. 

\begin{prop}
\label{prop:BB_Bing}
There exists a map $\hat \rho \colon \S^n \to \S^n$ which restricts to a diffeomorphism $\hat \rho|\left(\S^n\setminus \cS(\fI_{B,n})\right) \colon \S^n\setminus \cS(\fI_{B,n}) \to \S^n \setminus \hat \rho(\cS(\fI_{B,n}))$ and for which there exists a homeomorphism $\S^n/\mathrm{BB}\to \S^n$ so that the diagram 
\[
\xymatrix{
\S^n \ar[rr]^{\hat\rho} \ar[dr]^{x\mapsto [x]} && \S^n \\
& \S^n/\mathrm{BB} \ar[ur]^{\approx} & }
\]
commutes. In particular, $\hat \rho(\cS(\fI_{B,n}))$ is a Cantor set.
\end{prop}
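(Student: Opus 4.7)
The plan is to shrink the Bing--Blankinship decomposition by a smooth pseudo-isotopy of $\S^n$, adapting Bing's proof \cite{BingR:Homb3s} that the Bing double decomposition of $\S^3$ yields $\S^3$, with the Blankinship thickening handled by the cyclic permutation $\psi$. Concretely, I first construct, for each $\epsilon>0$, a diffeomorphism of the model tube $\T=\B^2\times(\S^1)^{n-2}$ that is the identity on $\partial\T$ and that carries $\tilde\varphi_1(\T)\cup \tilde\varphi_2(\T)$ into a subset whose components have diameter less than $\epsilon$ along the particular circle factor being stretched. This is Bing's three-dimensional shrinking applied to $\B^2\times \S^1$, multiplied by the identity on the remaining $(\S^1)^{n-3}$ and regularized to a diffeomorphism. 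Conjugating by $\vartheta\circ \tilde\varphi_w$ and patching across words $w\in \cW_2$ of length $k-1$, which index pairwise disjoint rings, produces a diffeomorphism $\alpha_k\colon \S^n\to \S^n$ supported in $\bigcup_{|w|=k-1}\vartheta(\T_w)$ and shrinking each level-$k$ child ring inside its parent.

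Next I set $\hat\rho_k=\alpha_k\circ\cdots\circ \alpha_1$ and address convergence. The cyclic permutation $\psi$ built into $\tilde\varphi_i$ ensures that successive levels of shrinking stretch \emph{different} $\S^1$-factors, so that after any $n-2$ consecutive levels every circle direction has been acted upon once. Combined with the contraction of $\tilde\varphi_i$ in the $\B^2$-direction, this forces
\[
\max_{|w|=k}\diam \hat\rho_k(\vartheta(\T_w)) \longrightarrow 0\quad\text{as }k\to\infty,
\]
provided the shrinking parameters at level $k$ are chosen small enough compared with those at level $k-1$. With such a choice the diffeomorphisms $\alpha_k$ are so close to the identity that $(\hat\rho_k)$ is uniformly Cauchy in $C^0(\S^n,\S^n)$, and the uniform limit $\hat\rho$ is continuous. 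Since only finitely many $\alpha_j$ act nontrivially on a neighborhood of any point of $\S^n\setminus \cS(\fI_{B,n})$, the restriction $\hat\rho|(\S^n\setminus \cS(\fI_{B,n}))$ coincides locally with a finite composition of diffeomorphisms and is therefore itself a diffeomorphism onto its image.

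Finally, because the diameters shrink to zero, $\hat\rho$ collapses every non-degenerate $g\in G_{\fI_{B,n}}$ to a single point; conversely, two distinct decomposition elements are separated by some $\hat\rho_k$ and hence by $\hat\rho$ modulo the decomposition. Therefore $\hat\rho$ factors through the quotient map $\S^n\to \S^n/\mathrm{BB}$ as a continuous bijection $\S^n/\mathrm{BB}\to \S^n$, which is a homeomorphism because $\S^n/\mathrm{BB}$ is compact and $\S^n$ is Hausdorff. The image $\hat\rho(\cS(\fI_{B,n}))$ is then the homeomorphic image of a Cantor set, hence a Cantor set in $\S^n$. This is the standard shrinkability scheme; compare \cite{DavermanR:Decm} and \cite{DeGryseD:WilCs}.

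The main obstacle is the one-step smooth shrinking lemma together with its propagation through $\psi$. Bing's original argument is a three-dimensional topological construction, so promoting it to an explicit diffeomorphism while keeping the geometric control depending only on $\epsilon$ (and not on the ring $\T_w$) is delicate; moreover, one must verify that a single Bing-type shrinking, which contracts only along \emph{one} $\S^1$-factor, combines with the cyclic permutation $\psi$ to produce contraction in \emph{all} $n-2$ circle directions within a bounded number of iterations. Once these estimates are established uniformly in $w$, the outer pseudo-isotopy and the descent to the quotient are routine.
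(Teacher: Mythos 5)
The overall scheme you describe (iteratively composing shrinking diffeomorphisms, establishing uniform Cauchy convergence, and passing to the quotient by an elementary compactness argument) matches the paper's proof of Proposition~\ref{prop:BB_Bing}, which deduces the result from a shrinking Lemma~\ref{lemma:BB_shrink} in exactly this way. The genuine gap, however, is in your claimed construction of the shrinking diffeomorphism itself. You propose to take Bing's three-dimensional shrinking on $\B^2\times\S^1$, multiply by the identity on $(\S^1)^{n-3}$, and let the cyclic permutation $\psi$ rotate which $\S^1$-factor is ``acted upon.'' This fails on two counts. First, the description of a diffeomorphism that ``carries $\tilde\varphi_1(\T)\cup\tilde\varphi_2(\T)$ into a subset whose components have diameter less than $\epsilon$'' is impossible: $\tilde\varphi_i(\T)$ is an essential $n$-tube in $\T$ and cannot be shrunk rel $\partial\T$; in Bing's scheme, what gets shrunk are \emph{deep} descendants $\T_w$, $|w|=m\gg k$, not one-level children, and the whole point is that $m$ depends badly on $\epsilon$. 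Second, and more fundamentally, the Blankinship rings $\T_w$ are not products of a $3$-dimensional Bing ring with a torus: since $\tilde\varphi_i=(\varphi_i\times\id)\circ(\id_{\B^2}\times\psi)$, consecutive iterates $\T_{i_1\cdots i_m}$ thread the $\B^2$-direction through different $\S^1$-factors at each level, and a $3$D shrinking crossed with the identity acts on the wrong submanifolds. Thus ``one Bing step per direction every $n-2$ levels'' does not give the needed diameter estimate, and you have not supplied the estimate you yourself flag as the ``main obstacle.''

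What the paper actually does in Lemma~\ref{lemma:BB_shrink} is carry out $m$ steps of Bing's rearrangement in \emph{each} $\S^1$-direction simultaneously, organized through interlaced words $[w,\hat w]$, with ``neighborhood switch'' diffeomorphisms $G_{\sigma,\delta,\hat\sigma}$ that rebase the tubular-neighborhood decomposition between $B^3(\sigma,\delta)\times\hat\sigma$ and $\sigma\times B^3(\hat\sigma,\delta)$ at each stage. The nontrivial point is the uniform bilipschitz control of these switches, which guarantees $\diam T_{[w,\hat w]}\lesssim \diam\sigma_w+\diam\hat\sigma_{\hat w}+\delta_m$ and hence the required diameter bound after $2m$ levels; none of this geometry appears in your sketch. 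You correctly identify the outer iteration, convergence, and quotient-descent as routine, but the missing construction is the actual content of the proposition, so the argument as written is incomplete.
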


The proof of Proposition \ref{prop:BB_Bing} may be considered as classical. In the heart of the argument is Bing's original shrinking lemma \cite[Lemma, p.358]{BingR:Homb3s}, which we formulate as follows. 

\begin{lem}
\label{lemma:Bing_shrink}
Let $\fI_{B}=(\B^2\times \S^1;\varphi_1,\varphi_2)$ be an initial package for the Bing double, $k\in\N$, and $\varepsilon>0$. Then there exists an integer $m\geq k$ and a diffeomorphism $\hat \varrho \colon \B^2\times \S^1 \to \B^2\times \S^1$ for which 
\begin{itemize}
\item[(1)] $\hat \varrho|(\B^2\times \S^1) \setminus \bigcup_{|w|=k} \varphi_w(\B^2\times \S^1) = \id$, 
\item[(2)] for each word $w$ of length $k$, there exists a neighborhood $\omega_w$ of $\partial \varphi_w(\B^2\times \S^1)$ in $\varphi_w(\B^2\times \S^1)$ for which $\hat \varrho|\omega_w =\id$, and
\item[(3)] for each word $w$ of length $m$, $\diam{\hat \varrho(\varphi_w(\B^2\times \S^1))}<\varepsilon$.
\end{itemize}
\end{lem}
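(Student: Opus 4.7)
The plan is to reduce to the case $k=0$ and then iterate Bing's basic shrinking move. Since the rings $\varphi_w(\B^2\times\S^1)$ with $|w|=k$ are pairwise disjoint and the required $\hat\varrho$ must be the identity off their union, it suffices to construct the diffeomorphism independently inside each such ring. Conjugating by the smooth embedding $\varphi_w$ reduces the problem to: for a single $V=\B^2\times\S^1$ and any $\varepsilon>0$, produce a diffeomorphism $V\to V$ supported away from $\partial V$ that shrinks every depth-$m$ subring to diameter less than $\varepsilon$ for some $m$.

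The engine of the proof is Bing's basic shrinking move. Given a smoothly embedded solid torus $R$ diffeomorphic to $V$, together with its pair of Bing subrings $R_1,R_2\subset\interior R$, and given any $\delta>0$, one produces a self-diffeomorphism $h_R$ of $R$, equal to the identity on a collar of $\partial R$, with $\diam(h_R(R_i))<\delta\cdot \diam(R)$ for $i=1,2$. The key geometric observation is that although $R_1\cup R_2$ forms a non-trivial Bing link in $R$, a Bing pair can be isotoped inside $R$, relative to a neighbourhood of $\partial R$, so as to sit entirely in an arbitrarily small ball: the linking is a topological obstruction but not a metric one. Bing's original argument is in the PL category, so one must realise the isotopy smoothly and smooth the finitely many corners, which is standard for $3$-manifolds.

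Granted the basic move, the lemma follows by induction. Apply $h_V$ first, producing a diffeomorphism in which $\varphi_1(V)$ and $\varphi_2(V)$ have been pushed into two disjoint sets of diameter less than $\delta\cdot\diam(V)$. Inside each of these depth-$1$ images apply a conjugate of the basic move, supported in its interior; this shrinks every depth-$2$ subring to diameter less than $\delta^2\cdot\diam(V)$. Iterating $m$ times and composing yields a single diffeomorphism of $V$ under which each depth-$m$ subring has image of diameter less than $\delta^m\cdot\diam(V)$. Choosing $\delta<1$ and $m$ with $\delta^m\cdot\diam(V)<\varepsilon$ completes the argument, and the boundary-collar condition at every step guarantees condition (2). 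The main obstacle is the smooth realisation of Bing's basic move together with the careful collar bookkeeping ensuring the compositions yield a genuine diffeomorphism meeting conditions (1)--(3) at every scale; the underlying topology --- that any Bing pair can be untangled into a small ball by an ambient isotopy rel boundary --- is classical.
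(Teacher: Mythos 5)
Your proposed ``basic shrinking move'' is impossible, and the error is precisely what makes the Bing double interesting. You claim that although $R_1\cup R_2$ forms a non-trivial Bing link in $R$, ``a Bing pair can be isotoped inside $R$, relative to a neighbourhood of $\partial R$, so as to sit entirely in an arbitrarily small ball: the linking is a topological obstruction but not a metric one.'' This is false. If there were an ambient isotopy of $R$ rel a neighbourhood of $\partial R$ carrying $R_1\cup R_2$ into a small round ball $B\subset\interior R$, then one could choose a meridian disk of $R$ disjoint from $B$, and this would exhibit the meridian of $\partial R$ as contractible in $R\setminus(R_1\cup R_2)$. But Lemma~\ref{lem:link1} of the paper (which in dimension $3$ is classical: see \cite[Lemmas 2.4 and 2.5]{DeGryseD:WilCs}) states exactly that $\pi_1(\partial R)\to\pi_1(R\setminus(R_1\cup R_2))$ is injective, so the meridian does \emph{not} bound in $R\setminus(R_1\cup R_2)$. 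Put differently: if the two Bing sub-rings could be shrunk into a small ball at one stage, then the whole shrinking argument would be a single-step triviality, and the decomposition space $\S^3/\Bd$ would not merit Bing's original, delicate analysis. The linking \emph{is} a metric obstruction to what you propose.

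Consequently the induction breaks down at the very first step. The correct mechanism, which the paper follows from \cite{BingR:Homb3s}, does not try to make the depth-$1$ rings small. Instead one chooses $2m$ transversal meridian disks $D_1,\ldots,D_{2m}$ along $\tau_1$ that cut it into pieces of diameter less than $\varepsilon$, and then re-embeds (via an ambient diffeomorphism rel boundary) the two depth-$2$ sub-rings $\tau_{11},\tau_{12}$ so that each of them spans only $m$ consecutive disks rather than all $2m$. Iterating, at each level the sub-rings span one disk fewer; after $m$ steps the depth-$(m+1)$ rings meet exactly one disk and therefore have diameter less than $\varepsilon$. The shrinking happens only at the $m$-th generation, never at the first. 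You need a version of this ``one disk at a time'' reduction; there is no one-step move that does the job.

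One more minor remark: even in a hypothetical world where your basic move existed, your claim to shrink ``every depth-$1$ subring'' to small size and recurse would recover much more than the lemma asserts, namely that $\S^3/\Bd$ is trivially $\S^3$ without any shrinking apparatus at all. That should already have been a red flag.
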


We adapt the proof of Bing's shrinking lemma to obtain a corresponding shrinking lemma for the Bing--Blankinship construction. 

\begin{lem}
\label{lemma:BB_shrink}
Let $n\ge 3$ and let $\fI_{B,n} = (\T;\tilde \varphi_1,\tilde \varphi_2)$ be an initial package for the $n$-dimensional Bing--Blankinship construction, where $\T=\B^2\times (\S^1)^{n-2}$. Let also $k\in \N$, and $\varepsilon>0$. Then there exists an integer $m\geq k$ and a diffeomorphism $\hat \varrho \colon \T \to \T$ so that 
\begin{itemize}
\item[(1)] $\hat \varrho|\T\setminus \bigcup_{|w|=k} \T_w = \id$, 
\item[(2)] for each word $w$ of length $k$, there exists a neighborhood $\Omega_w$ of $\partial \T_w$ in $\T_w$ for which $\hat \varrho|\Omega_w =\id$, and 
\item[(3)] for each word $w$ of length $m$, $\diam{\hat \varrho(\T_w)} < \varepsilon$.
\end{itemize}
\end{lem}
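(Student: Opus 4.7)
The plan is to mimic Bing's original shrinking argument (Lemma \ref{lemma:Bing_shrink}) while iterating across the $n-2$ cyclic $\S^1$-directions supplied by the permutation $\psi$. First I would reduce to the case $k=0$: the level-$k$ rings $\{\T_w : |w|=k\}$ are finitely many pairwise disjoint smoothly embedded tubes, each related to $\T$ via the diffeomorphism $\tilde\varphi_w$, so there is a uniform distortion bound across these finitely many embeddings. Given a diffeomorphism $\hat\varrho_0\colon \T \to \T$ that is the identity near $\partial \T$ and shrinks every level-$m_0$ ring to sufficiently small diameter, one transports $\hat\varrho_0$ via $\tilde\varphi_w$ onto each $\T_w$ and extends by the identity on the complement; the boundary-identity clause guarantees smoothness of the patched map and yields conclusions (1) and (2).

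For the $k=0$ case, I would exploit the explicit form $\tilde\varphi_i = (\varphi_i \times \id_{(\S^1)^{n-3}}) \circ (\id_{\B^2}\times \psi)$. A single application of $\tilde\varphi_i$ applies the Bing embedding $\varphi_i$ to the $\B^2$ factor together with the last $\S^1$-coordinate $y_{n-2}$, while $\psi$ cyclically relabels the $\S^1$-coordinates so that subsequent iterations Bing-double a different $\S^1$-coordinate. Consequently, after any block of $n-2$ iterations each of the $n-2$ $\S^1$-factors has played the Bing-doubled role exactly once. The idea is then to apply Bing's shrinking lemma once per $\S^1$-direction: for each $\ell \in \{1,\ldots,n-2\}$, apply Lemma \ref{lemma:Bing_shrink} in the $\B^2 \times \S^1_\ell$ slice (where $\S^1_\ell$ is the $\ell$-th $\S^1$-factor of $\T$) at a sufficiently deep level $m_\ell$, extended trivially on the remaining $(\S^1)^{n-3}$-factors, to obtain a diffeomorphism $\sigma_\ell\colon \T \to \T$ which is the identity near $\partial \T$ and shrinks the $\S^1_\ell$-extent of every sufficiently deep Blankinship sub-ring. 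Composing $\hat\varrho_0 = \sigma_{n-2} \circ \cdots \circ \sigma_1$ and choosing the $m_\ell$ large enough should yield level-$m$ rings of total diameter less than $\varepsilon$ for a suitable $m$ depending on $m_1,\ldots,m_{n-2}$.

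The main obstacle is ensuring that the $n-2$ shrinkings compose cleanly without interfering with each other. The identity-near-boundary clause (2) of Bing's lemma is crucial here: each $\sigma_\ell$ is supported in a thin neighborhood of specific Bing sub-rings in its designated slice, and the nested structure of the Blankinship rings at sufficiently deep levels guarantees that the $\sigma_\ell$'s act in essentially disjoint regions. Careful bookkeeping of which $\S^1$-direction is \emph{active} at each stage of the iteration, tracked by the power of $\psi$ accumulated up to that point, together with choosing the $m_\ell$ in the correct order and sufficiently large, should resolve all compatibility issues and produce a diffeomorphism $\hat\varrho$ and integer $m$ satisfying (1)--(3).
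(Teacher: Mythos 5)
Your reduction to the case $k=0$ is correct and matches the paper, but the core mechanism you propose for $k=0$ has a genuine gap. You want to apply Lemma \ref{lemma:Bing_shrink} once per $\S^1$-direction --- getting a diffeomorphism $\sigma_\ell$ of $\T$ that equals a Bing shrink on the $\B^2\times\S^1_\ell$ factor and is the identity on the remaining $(\S^1)^{n-3}$ factors --- and then set $\hat\varrho_0 = \sigma_{n-2}\circ\cdots\circ\sigma_1$. This does not work as stated. Because $\tilde\varphi_i=(\varphi_i\times\id)\circ(\id\times\psi)$ cyclically permutes the $\S^1$-factors before Bing-doubling, the deep Blankinship rings $\T_w$ are \emph{not} of the form $T\times(\S^1)^{n-3}$ for some Bing sub-torus $T\subset\B^2\times\S^1_\ell$; already $\T_{ij}$ is a torus which twists around all of the $\B^2$, $\S^1_1$, and $\S^1_{n-2}$ directions simultaneously. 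Consequently a diffeomorphism of the form $h\times\id_{(\S^1)^{n-3}}$, with $h$ a Bing shrinking of $\B^2\times\S^1_\ell$, does not by itself shrink the $\S^1_\ell$-extent of the rings $\T_w$: it is simply not adapted to their nesting. And even if $\sigma_1$ did shrink one factor, $\sigma_2$ as defined acts on the \emph{original} family $\{\T_w\}$ rather than on $\{\sigma_1(\T_w)\}$, so the composite need not see shrunken rings at all. Your closing remark that ``careful bookkeeping\ldots should resolve all compatibility issues'' is exactly where the real work lies, and it is not a bookkeeping issue.

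The paper's actual proof resolves this differently: rather than composing $n-2$ \emph{complete} applications of Lemma \ref{lemma:Bing_shrink}, it interleaves \emph{single steps} of Bing's shrinking rearrangement across the $\S^1$-directions, indexed by the ``interlace'' $[w,\hat w]$ of words. At each stage it installs explicit bilipschitz tubular-neighborhood charts (Lemma \ref{lemma:TNT}) and ``neighborhood switch'' diffeomorphisms $G_{\sigma,\delta,\hat\sigma}$ that transport the picture from $\tau_\sigma\times\hat\sigma$ to $\sigma\times\hat\tau_{\hat\sigma}$, so that the next Bing rearrangement can be performed in the next direction while keeping track of how the rings have been displaced. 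Equally important, and missing from your sketch, is the quantitative control: Bing's argument gives small \emph{curves} $\sigma_w$, $\hat\sigma_{\hat w}$, and one must propagate this to small $n$-tubes $T_{[w,\hat w]}$; the paper does this by choosing the bilipschitz constants at stage $k$ to be $2^{2^{-2k}}$ so that the composite change of coordinates $R_{[w,\hat w]}$ is \emph{uniformly} $(8/\delta_0)$-bilipschitz, which is what finally converts the diameter bounds on curves into the estimate $\diam T_{[w,\hat w]}<\varepsilon$. Without an analogue of this uniform bilipschitz estimate your approach has no way to conclude that the iterated composition shrinks all $n$-tube diameters.
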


\subsubsection{Bing's shrinking rearrangement; Proof of Lemma \ref{lemma:Bing_shrink}}

As a preparation for the proof of Lemma \ref{lemma:BB_shrink}, we recall in this section Bing's shrinking argument for the decomposition $G_{\fI_B}$ in \cite{BingR:Homb3s}. Let $t\subset \R^3$ be a solid $3$-torus and $(t;\phi_1,\phi_2)$ an initial package equivalent to $\fI_B$. 

Let $\varepsilon>0$. It suffices to show that there exists $m\in \N$ and a diffeomorphism $h \colon t\to t$ for which there exists a neighborhood $\omega \subset t$ of $\partial t$ so that $h|\omega = \id$ and $\diam{h(t_w)}<\varepsilon$ for each $w\in \cW_2$ of length $m$. This is the case $k=0$ of the statement. Since the diffeomorphism  $\phi_w \colon t\to t_w$ is absolutely continuous for each $w\in \cW_2$, the general case follows from this special case.

\medskip

\emph{Step 0}. 
Let $\tau_0 = t$. We fix an initial package $\fI'_1=(t;\phi'_1,\phi'_2)$ equivalent to $\fI_B$ so that the solid torus $\phi'_i(t)$ is a tubular $\delta_0$-neighborhood of a smooth curve $\sigma_i \subset t$ for some $\delta_0\in(0,\varepsilon/4)$ for both $i=1,2$. Recall that a smooth curve $\sigma\subset t$ is an image of a smooth embedding $\S^1\to t$ and a $\delta$-neighborhood of $\sigma$ is $B^3(\sigma,\delta) = \{ x\in \R^3 \colon \dist(x,\sigma)<\delta\}$. Let $h_0 \colon t\to t$ be a diffeomorphism conjugating $(t;\phi_1,\phi_2)$ to $\fI'_1$. Note that, by this choice of a new initial package, we merely shrink the width of the rings. In what follows we consider only the solid torus $\tau_1=\phi_1(t)$; the same argument applies verbatim to $\tau_2=\phi_2(t)$ and $\sigma_2$.

Let $m\in\N$ be such that there exists points $x_1,\dots,x_{2m}$ in $\sigma_1$ in a cyclic order so that, for each $x\in \sigma_1$, $\dist(x,\{x_1,\ldots, x_{2m}\})<\varepsilon/4$. For each $i=1,\dots,2m$, let $P_i$ be a $2$-dimensional plane in $\R^3$ meeting $\sigma_1$ at $x_i$ orthogonally and let $D_i=B^3(x_i,\delta_0)\cap P_i \subset \tau_1$. The number $\d_0>0$ can be chosen small enough so that each $D_i$ intersects $\sigma_1$ only at $x_i$. Note that, if a connected set $E\subset \tau_1$ intersects only one of the disks $D_i$, then $\diam{E} < \varepsilon$.

We now follow Bing's argument and show that there exists a diffeomorphism $h \colon t \to t$ so that, for each $w\in \cW_2$ of length $m$, $h(t_w)$ intersects only one of the disks $D_i$ for $i=1,\dots, 2m$. This concludes the proof. 

\medskip

\emph{Step 1}. Let $\delta_1<\delta_0$ and let $\fI'_2=(\tau_1;\phi'_{11},\phi'_{12})$ be an initial package equivalent to $\fI_B$ satisfying the following properties. For $i=1,2$, we assume that the solid torus $\tau_{1i} = \phi'_{1i}(\tau_1)$ is a $\delta_1$-neighborhood of a smooth curve $\sigma_{1i}\subset \tau_1$. We also assume that 
\[
\begin{cases}
\#(\sigma_{11} \cap D_j) &= 2\\
\#(\sigma_{12} \cap D_j) &= 0
\end{cases} 
\quad \mathrm{and} \quad 
\begin{cases}
\#(\sigma_{11} \cap D_{m+j}) &= 0 \\
\#(\sigma_{12} \cap D_{m+j}) &= 2
\end{cases}
\]
for $j=1,\ldots, m$. Since $\fI'_2$ is equivalent to $\fI_B$, these conditions force $\sigma_{11}$ and $\sigma_{12}$ to link between disks $D_1$ and $D_{2m}$ and between $D_m$ and $D_{m+1}$; see Figure \ref{fig:BingStep1} for the configuration. 

\begin{figure}[h!]
\includegraphics[scale=0.40]{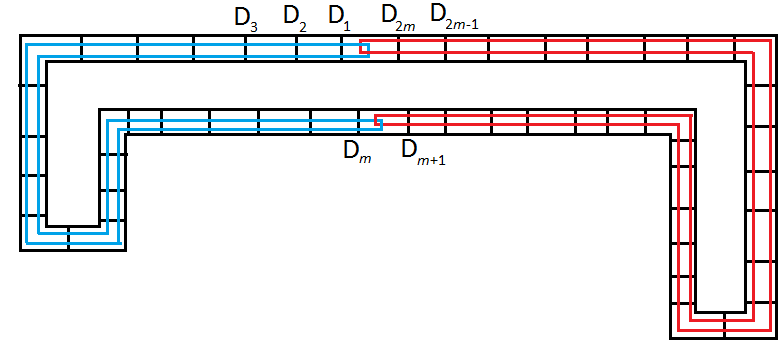}
\caption{The tubes $\tau_{11}$ (blue) and $\tau_{12}$ (red) inside $\tau_1$.}
\label{fig:BingStep1}
\end{figure}

Due to the equivalence of initial packages, there exists a diffeomorphism $h_1 \colon t\to t$ which is the identity in a neighborhood of $t\setminus (\tau_1\cup \tau_2)$ and satisfies $h_1(\phi_{1j}(t)) = \phi'_{1j}(\tau_1)$ for $j=1,2$. Similar rearrangement is possible in the solid torus $\tau_2$. 

If $m=1$, then $h_1$ satisfies the condition $\diam{h_1(\phi_w(t))}<\varepsilon$ for all words $w$ of length $k=2$. 

\smallskip

\emph{Step 2}. 
Suppose that $m\geq 2$. We focus on the solid torus $\tau_{11}=\phi'_{11}(\tau_1)$; constructions in other tori $\tau_{ij} = \phi'_{ij}(\tau_i)$ are verbatim. Following the idea in Step 1, we fix smooth curves $\sigma_{111}$ and $\sigma_{112}$ in $\tau_{11}$ as in Figure \ref{fig:BingStep2} linked between the disks $D_1$ and $D_2$ and the other between $D_{m-1}$ and $D_m$. We also require that $\sigma_{111}$ intersects exactly $D_1,\dots,D_{m-1}$ and $\sigma_{112}$ intersects exactly $D_2,\dots,D_{m}$. Furthermore, if $m\geq 3$, for each $i=1,2$, we require that $\sigma_{11i}$ intersects each of $D_2,\dots, D_{m-1}$ exactly at $2$ points. 

\begin{figure}[h!]
\includegraphics[scale=0.40]{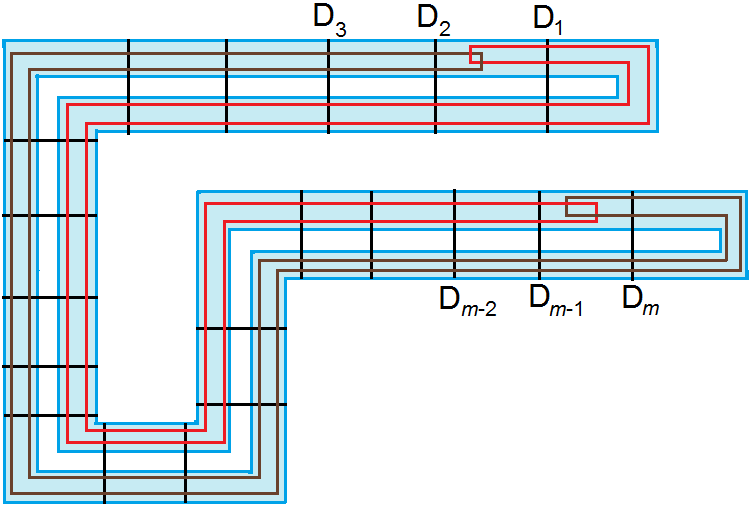}
\caption{The tubes $\tau_{111}$ (brown) and $\tau_{112}$ (red) inside $\tau_{11}$.}
\label{fig:BingStep2}
\end{figure}

As in Step 1, we fix $\delta_2\in (0,\delta_1)$ and an initial package $\fI'_3 = (\tau_{11};\phi'_{111},\phi'_{112})$ so that $\phi'_{11i}(\tau_{11}) = B^3(\sigma_{11i},\delta_2)$ for $i=1,2$. Similarly with $\sigma_{111},\sigma_{112}$, we may assume that $\tau_{111}=\phi'_{111}(\tau_{11})$ intersects exactly $D_1,\dots,D_{m-1}$ and $\tau_{112}=\phi'_{112}(\tau_{11})$ intersects exactly $D_{2},\dots,D_{m}$. 

There exists now a diffeomorphism $h_2 \colon t\to t$ which coincides with $h_1$ on $t\setminus \bigcup_{|w|=2} \tau_w$. Moreover, if $m=2$, $\diam{h(\phi_w(t))} < \varepsilon$ for $|w|=3$. %

\medskip

\emph{Induction assumption.} Suppose we have continued the process for $k\ge 1$ steps and that there exists $\delta_k>0$ so that, for each $w\in \cW_2$ of length $k+1$, 
\begin{itemize}
\item[(a)] the solid torus $\tau_{w}$ is the $\delta_k$-neighborhood of the smooth core curve $\sigma_w$ of $\tau_w$, 
\item[(b)] $\sigma_{w}$ intersects exactly $m-k+1$ consecutive disks in $\{D_1,\ldots, D_{2m}\}$, 
\item[(c)] $\sigma_{w}$ intersects $m-k$ consecutive disks in $\{D_1,\ldots, D_{2m}\}$ at exactly two points.
\end{itemize}

\medskip

\emph{Step $k+1$.} 
Let $w$ be a word of length $k+1$ and let $j\in \N$ be such that $\sigma_w$ intersects disks $D_{j},\dots, D_{j+m-k}$. By the previous step, we may assume that $\sigma_{w}$ intersects disks $D_{j+1},\ldots, D_{j+m-k}$ at exactly two points.

Let $\sigma_{w 1}$ and $\sigma_{w2}$ be smooth pair-wise disjoint curves in $\tau_w$ with the following properties:
\begin{itemize}
\item[(1)] $\sigma_{w1}$ and $\sigma_{w2}$ link as in Figure \ref{fig:Bing}, that is, $(\tau_w, \sigma_{w1}\cup \sigma_{w2})$ and $(\tau_0,\sigma_1\cup \sigma_2)$ are diffeomorphic as pairs;
\item[(2)] $\sigma_{w1}$ and $\sigma_{w2}$ link between $D_j$ and $D_{j+1}$ and between $D_{j+m-k-1}$ and $D_{j+m-k}$; 
\item[(3)] there exists $j\in \{1,\ldots, 2m\}$ so that $\sigma_{w1}$ intersects exactly disks $D_j,\dots,D_{j+m-k-1}$ and $\sigma_{w2}$ exactly disks $D_{j+1},\dots,D_{j+m-k}$; and  
\item[(4)] if $m\geq k+2$, $\sigma_{wi}$ intersects each disk $D_{j+1},\dots D_{j+m-k-1}$ exactly at $2$ points for $i=1,2$.
\end{itemize}

Let now $\delta_{k+1} \in (0,\delta_k)$ be such that there exists smooth embeddings $\phi'_{wi}\colon \tau_w \to \tau_w$ for which $\phi'_{wi}(\tau_w)= B^3(\sigma_{wi},\delta_{k+1})$ and $(\tau_{w}; \phi'_{w1},\phi'_{w2})$ is equivalent to the initial package $\fI_B$. 

We conclude again that there exists a diffeomorphism $h_{k+1}\colon t\to t$ which agrees with $h_k$ in $t\setminus \bigcup_{|v|=k+1} \tau_v$ and satisfies $h_{k+1}(\phi_{wi}(t)) = \phi'_{wi}(\tau_w)$ for each $|w|=k$ and $i=1,2$. This concludes the induction step.

\medskip

\emph{Step $m$; end of the process.}
Suppose that we have reached the step $k=m$. Then, for each $w\in \cW_2$ of length $m+1$, the solid torus $\tau_w$ intersects exactly one of the disks $D_1,\ldots, D_{2m}$. Thus $\diam{h_{m}(t_w)} < \varepsilon$ for $|w|=m+1$. This completes the proof.

\subsubsection{Proof of Lemma \ref{lemma:BB_shrink}}

We apply Bing's idea to rearrange the Blankinship rings $\T_w$ (or equivalently rings $\vartheta(\T_w)$) so that the diameters decrease to zero as $|w|\to \infty$. We discuss the details of the proof of Lemma \ref{lemma:BB_shrink} only in dimension $n=4$ for brevity. The homeomorphism $\hat \varrho$ is obtained similarly in higher dimensions. 

We use notation and constructions from the proof of Lemma \ref{lemma:Bing_shrink}. In the forthcoming steps, we endow each curve $\sigma \subset \R^3$ with the restriction of the Euclidean metric and denote this metric by $d_{\sigma}$. We also consider product spaces $(\sigma,d_{\sigma})\times \B^2(\delta)$ and $(\sigma,d_{\sigma}) \times \B^2(\delta) \times (\hat{\sigma},d_{\hat{\sigma}})$ for $\delta>0$, which we endow with the $\ell^2$-metrics
\[
d_{\sigma,\delta}((x,z),(x',z')) = \left( d_{\sigma}(x,x')^2 + |z-z'|^2\right)^{1/2}
\]
and
\[
d_{\sigma,\delta,\hat{\sigma}}((x,z,y),(x',z',y')) = \left( d_{\sigma}(x,x')^2 + |z-z'|^2 + d_{\hat{\sigma}}(y,y')^2\right)^{1/2},
\]
respectively, where $(x,z),(x',z')\in (\sigma,d_{\sigma})\times \B^2(\delta)$ and $(x,z,y),(x',z',y')\in  (\sigma,d_{\sigma}) \times \B^2(\delta) \times (\hat{\sigma},d_{\hat{\sigma}})$. 

We begin with a simple observation on bilipschitz parametrizations of \emph{tubular neighborhoods} of smooth curves in $\R^3$, which we record as a lemma; see \cite[Theorem 9.20]{Spivak}. Recall that 
\[ B^3(\sigma,\d) = \{x\in\R^3 \colon \dist(x,\sigma) \leq \d\}\] 
denotes a $\delta$-neighborhood of $\sigma$ in $\R^3$. 

\begin{lem}\label{lemma:TNT}
Suppose $\sigma \subset \R^n$ is a closed smooth simple curve. Then, for each $L>1$, there exists $\delta>0$ and an $L$-bilipschitz diffeomorphism 
\[ 
e_{\sigma,\delta}\colon (\sigma,d_{\sigma})\times\B^{2}(\delta)\to B^3(\sigma,\delta)
\] 
such that $e_{\sigma,\delta}(x,0) = x$ for all $x\in\sigma$.
\end{lem}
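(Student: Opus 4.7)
The approach is to construct $e_{\sigma,\delta}$ as the tubular-neighborhood diffeomorphism associated to the (trivial) normal bundle of $\sigma\subset\R^3$, and then to estimate the bilipschitz distortion via the derivative at the zero section. Parametrize $\sigma$ by a unit-speed map $\gamma\colon\R/\ell_0\Z\to\sigma$ and choose a smooth orthonormal frame $(n_1(t),n_2(t))$ of the normal bundle along $\gamma$. Such a frame exists because the normal bundle is an orientable rank-$2$ vector bundle over $\S^1$ and is hence trivializable. Define
\[
F(t,z_1,z_2)=\gamma(t)+z_1 n_1(t)+z_2 n_2(t).
\]
By the tubular neighborhood theorem \cite[Theorem 9.20]{Spivak}, there exists $\delta_0>0$ (the normal injectivity radius of $\sigma$) so that for each $\delta\in(0,\delta_0]$, the restriction of $F$ is a smooth diffeomorphism $(\R/\ell_0\Z)\times\B^2(\delta)\to B^3(\sigma,\delta)$. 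Identifying $\R/\ell_0\Z$ with $\sigma$ via $\gamma$ yields the required map $e_{\sigma,\delta}$, which satisfies $e_{\sigma,\delta}(x,0)=x$ by construction.

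For the bilipschitz estimate, observe that $DF(t,0)$ sends the orthonormal frame $\partial_t,\partial_{z_1},\partial_{z_2}$ to the orthonormal frame $\dot\gamma(t),n_1(t),n_2(t)$ of $\R^3$, and is therefore a linear isometry. By compactness of $\sigma$ and smoothness of $F$, both $\|DF(t,z)\|$ and $\|DF(t,z)^{-1}\|$ tend to $1$ uniformly in $t$ as $|z|\to 0$. A standard mean-value argument along short coordinate arcs then shows that for each $L'>1$ there is $\delta_1\in(0,\delta_0)$ so that $F|_{(\R/\ell_0\Z)\times\B^2(\delta_1)}$ is $L'$-bilipschitz from the product of the \emph{arclength} metric on $\R/\ell_0\Z$ and the Euclidean metric on $\B^2(\delta_1)$ to the Euclidean metric on the image.

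To transfer this bound to the chord metric $d_\sigma$, note that for a smooth simple closed curve, the ratio $\ell_\sigma(x,x')/d_\sigma(x,x')$ tends to $1$ as $x\to x'$ along $\sigma$, and is bounded above by a constant $C_\sigma<\infty$ on $\sigma\times\sigma$ by compactness. For pairs close in arclength, the two metrics agree to first order; for pairs at arclength separation above any fixed threshold, $d_\sigma$ is uniformly bounded below by some $m>0$, and the elementary estimates
\[
|x-x'|-2\delta\le|F(x,z)-F(x',z')|\le|x-x'|+2\delta
\]
furnish bilipschitz constants $(1\pm 2\delta/m)$, which are close to $1$ once $\delta$ is small. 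Combining the two regimes with the arclength bilipschitz bound above yields the required $L$-bilipschitz estimate for $e_{\sigma,\delta}$ with respect to $d_{\sigma,\delta}$.

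The main technical obstacle is the intermediate regime in which $d_\sigma(x,x')$ and $|z-z'|$ are both comparable to $\delta$, where the infinitesimal derivative bound must combine cleanly with the small discrepancy between arclength and chord distance. This is handled by the local expansion $\ell_\sigma=d_\sigma(1+O(d_\sigma^2))$ for close pairs, which absorbs the error into the $L'$ factor; the overall distortion of $e_{\sigma,\delta}$ is then $1+O(\delta)$, and choosing $\delta$ small enough gives the stated $L$-bilipschitz constant.
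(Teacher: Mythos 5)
The paper gives no proof of this lemma---it is recorded with only a citation to Spivak's tubular-neighborhood theorem---and your argument is a correct and complete unpacking of exactly that reference. The construction via a trivializing orthonormal normal frame (trivial since the normal bundle of an embedded circle in $\R^3$ is orientable of rank $2$), the observation that $DF$ is a linear isometry on the zero section, the uniform estimate by compactness that $DF$ is close to isometric for small $\delta$, and the split into near-diagonal and far regimes to pass between arclength and the chordal metric $d_\sigma$ are all sound and constitute precisely what the cited theorem is meant to deliver.
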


Let $\sigma \subset \R^3$ be a smooth simple closed curve and $\delta>0$. Given a diffeomorphism $e_{\sigma,\delta}$ as in Lemma \ref{lemma:TNT}, we define a diffeomorphism $\hat{e}_{\sigma,\d} \colon \B^{2}(\d) \times (\sigma,d_{\sigma}) \to B^3(\sigma,\d)$ by $\hat{e}_{\sigma,\d}(u,x) = e_{\sigma,\d}(x,u)$. If $\hat \sigma$ is another smooth simple closed curve in $\R^3$, we call a diffeomorphism
\[ 
G_{\sigma,\d,\hat{\sigma}} \colon B^3(\sigma,\d) \times (\hat{\sigma},d_{\hat{\sigma}}) \to (\sigma,d_{\sigma}) \times B^3(\hat{\sigma},\d)
\]
defined by
\[
G_{\sigma,\d,\hat{\sigma}} = (\id_{\sigma}\times \hat{e}_{\hat{\sigma},\d}) \circ (e_{\sigma,\d}\times\id_{\hat{\sigma}})^{-1}
\]
a \emph{neighborhood switch}.

To simplify the notation in the proof of Lemma \ref{lemma:BB_shrink}, we define, for each word $w=i_1\cdots i_k$ and $\hat w = j_1\cdots j_\ell$ in $\cW_2$, their \emph{interlace} $[w,\hat w]\in \cW_2$ by 
\[
[w,\hat w] = \left\{ \begin{array}{ll}
i_1 j_1 i_2 j_2 \cdots i_k j_k j_{k+1}\cdots j_\ell, & \mathrm{if}\ \ell \ge k, \\
i_1 j_1 i_2 j_2 \cdots i_\ell j_\ell i_{\ell+1} \cdots i_k, & \mathrm{if} \ell < k. 
\end{array}\right.
\]

\begin{proof}[Proof of Lemma \ref{lemma:BB_shrink}]
Let $\varepsilon >0$. Since the diffeomorphism  $\phi_w \colon t\to t_w$ is absolutely continuous for each $w\in \cW_2$, it suffices to consider only the case $k=0$. In what follows, we denote $\T = \S^1\times\B^2\times\S^1$ and $\T_w = \tilde{\varphi}_w(\T)$ for $w\in\cW_2$. For completion, we set $\sigma_0 = \hat{\sigma}_0 = \S^1 \subset \R^3$.

We construct iteratively a finite ordered tree $(T_u)_{|u|\le m}$ equivalent to $\fI_{B,n}$, where $u\in \cW_2$ has length at most $2m$ and $\diam T_u< \varepsilon$ for each $|u|=2m$. The required $4$-tubes $T_u$ and the level $m$ are found by showing that there exists $\delta_0>0$ so that, if $m\in \N$ is the level associated to diameter $\varepsilon' = \varepsilon \delta_0/20$ in Lemma \ref{lemma:Bing_shrink}, we find $4$-tubes $T_u$ for which there exists $(\delta_0/8)$-bilipschitz diffeomorphisms $T_{[w,\hat w]} \to (\sigma_w,d_{\sigma_w}) \times \B^2(\delta_m) \times (\sigma_{\hat w}, d_{\sigma_{\hat w}})$ for each $w,\hat w\in \cW_2$ of length $m$, where $\sigma_w$ and $\sigma_{\hat w}$ are core curves of $3$-tubes $\tau_w$ and $\tau_{\hat w}$, respectively, as in the proof of Lemma \ref{lemma:Bing_shrink}. The diameter bound follows then from diameter bounds of curves $\sigma_w$.

Roughly speaking, the curves $\sigma_w$ and $\sigma_{\hat{w}}$ are obtained by applying Bing's shrinking rearrangement in the two $\S^1$ directions of $\mathbb{T}$. In Steps $(k,1)$ below, $k\in\N\cup\{0\}$, we apply the shrinking rearrangement for the first direction and Steps $(k,2)$ for the second.

Fix $\d_0 \in (0,1)$ small enough so that the diffeomorphism $e_{\sigma_0,\d_0}$ of Lemma \ref{lemma:TNT} is $2$-bilipschitz. Set $\tau_0 =  B^3(\sigma_0,\d_0)$ and define $R_0 \colon \T \to \tau_0 \times\hat{\sigma}_0$ by 
\[ 
(x,u,y) \mapsto (e_{\sigma_0,\d_0}(x,\d_0u),y).
\] 
Then $R_0$ is a $(2/\d_0)$-bilipschitz diffeomorphism.

\medskip

\emph{Step (0,1)}. Let $\tau_1 = B^3(\sigma_1,\d_1)$ and $\tau_2 = B^3(\sigma_2,\d_1)$ be $3$-tubes in $\tau_0$, where $\sigma_1$ and $\sigma_2$ are smooth simple curves linked in $\tau_0$ as in Figure \ref{fig:Bing}, and $\d_1 < \varepsilon \delta_0/20$. We choose $\d_1$ small enough so that the diffeomorphisms $e_{\sigma_{1},\d_1}$ and $e_{\sigma_{2},\d_1}$, in Lemma \ref{lemma:TNT}, and the diffeomorphism $\hat{e}_{\hat{\sigma}_0,\d_1}$, associated to $e_{\hat \sigma,\delta_1}$, are $2^{1/2}$-bilipschitz. For each $i=1,2$, let $\phi_{i} \colon \tau_0\to\tau_0$  be a diffeomorphism with $\phi_{i}(\tau_0) = \tau_i$. Define $\Phi_i \colon \T \to \T$ by $\Phi_i = R_0^{-1}\circ (\phi_{i}\times \id) \circ R_0$ and set $T_i = \Phi_i(\T) \subset \T$. Note that the initial package $(\T;\Phi_1,\Phi_2)$ is conjugate to $(\T;\tilde\varphi_1,\tilde\varphi_2)$ and that the diagram
\[\xymatrix{
\T \ar[d]^{R_0} \ar[r]^{\Phi_{i}}
& T_{i}
\\
\tau_0\times(\S^1,d_{\S^1}) \ar[r]^{\phi_{i}\times \id} 
&\tau_i\times(\S^1,d_{\S^1}) \ar[u]_{R_0^{-1}|\tau_i\times \S^1}
}\]
commutes.

\medskip

\emph{Step (0,2)}. Set $\hat{\tau}_0 = B^3(\hat{\sigma}_0,\d_1)$ and, for $i=1,2$, let $R_{i} \colon T_{i} \to (\sigma_{i},d_{\sigma_{i}})\times \hat{\tau}_0$ be the mapping
\[
R_{i} =  G_{\sigma_i,\d_1,\hat{\sigma}_0}\circ R_0.
\]
Let $\hat{\tau}_1 = B^3(\hat{\sigma}_1,\hat{\d}_1)$ and $\hat{\tau}_2 = B^3(\hat{\sigma}_2,\hat{\d}_1)$ be $3$-tubes in $\hat{\tau}_0$, where $\hat{\sigma}_1$ and $\hat{\sigma}_2$ are smooth simple curves, linked inside $\hat{\tau}_0$ as in Figure \ref{fig:Bing}, and $\hat{\d}_1 < \d_1$. Moreover, we may assume that $\hat{\d}_1$ is small enough so that the diffeomorphisms $\hat{e}_{\hat{\sigma}_{j},\hat{\d}_1}$ and $e_{\sigma_{i},\hat{\d}_1}$ are $2^{1/4}$-bilipschitz for $i,j\in\{1,2\}$. For each $j=1,2$, let $\hat{\phi}_{j} \colon \hat{\tau}_0 \to \hat{\tau}_0$ be a diffeomorphism satisfying $\hat{\phi}_{j}(\tau_0) = \hat{\tau}_{j}$. Define also $\Phi_{ij} \colon T_i \to T_i$ by $\Phi_{ij} = R_{i}^{-1}\circ (\id\times\phi_{j}) \circ R_i$ and set $T_{ij} = \Phi_{ij}(T_i) \subset T_{i}$. Then the initial package $(T_i;\Phi_{i1},\Phi_{i2})$ is conjugate to $(\T;\tilde \varphi_1,\tilde \varphi_2)$ and the diagram
\[
\xymatrix{
T_{i} \ar[d]^{R_{i}} \ar[r]^{\Phi_{ij}}
& T_{ij}
\\
(\sigma_{i},d_{\sigma_{i}})\times \hat{\tau}_0 \ar[r]^{\id\times\hat{\phi}_{j}} 
&(\sigma_{i},d_{\sigma_{i}})\times \hat{\tau}_j \ar[u]_{R_{i}^{-1}|\sigma_{i}\times \hat \tau_j}}
\]
commutes.

\medskip

Let $m\in\N$ be the number of steps needed in Bing's shrinking procedure for $\tau_0$ and $\varepsilon\d_0/20$. We may assume that the same number of $m$ steps is needed in Bing's shrinking procedure for the simple curve $\hat{\tau}_0$ and diameter $\varepsilon\d_0/20$. We now apply Steps $1$ to $m$ of Bing's shrinking procedure to both curves $\tau_0$ and $\hat \tau_0$. 

\medskip

Suppose that after Step $(k,1)$ and Step $(k,2)$ we have obtained radii 
\[
\delta_0 > \hat \delta_0 > \cdots > \delta_k > \hat \delta_k >0
\]
and, for each $\ell \in \{1,\ldots, k\}$, $w=i_1\cdots i_\ell$ and $\hat w = j_1\cdots j_{\ell-1}$ in $\cW_2$, and $j=1,2$, we have
\begin{enumerate}
\item smooth closed simple curves $\sigma_w$ and $\hat \sigma_{\hat w j}$ as in the Step $\ell$ of Bing's shrinking procedure so that $\tau_w = B^3(\sigma_w,\d_\ell)$ and $\hat{\tau}_{\hat w j} = B^3(\hat{\sigma}_{\hat w j},\hat{\d}_\ell)$ are $3$-tubes;
\item a $4$-tube $T_{[w,\hat w j]}$ and a diffeomorphic embedding
\[ 
\Phi_{[w,\hat w j]} \colon T_{[w,\hat w]} \to T_{[w,\hat w]}
\] 
for which $T_{[w,\hat w j]} = \Phi_{[w,\hat w j]}(T_{[w,\hat w]})$ and $(T_{[w,\hat w]}; \Phi_{[w,\hat w 1]}, \Phi_{[w,\hat w 2]})$ is conjugate to $(\T; \tilde \varphi_1, \tilde \varphi_2)$; 
\item diffeomorphisms $e_{\sigma_w,\hat{\d}_{k}}$ and $\hat{e}_{\hat{\sigma}_{\hat w j,\hat{\d}_{k}}}$ as in Lemma \ref{lemma:TNT} which are $2^{2^{-2k}}$-bilipschitz; and 
\item a diffeomorphism $R_{[w,\hat w]} \colon T_{[w,\hat w j]} \to (\sigma_{w},d_{\sigma_{w}})\times \hat{\tau}_{\hat w}$ satisfying
\[
R_{[w,\hat w]} = G_{\sigma_{w},\d_{k},\hat{\sigma}_{\hat w}} \circ R_{[w,\hat w j]}.
\]
\end{enumerate}

\medskip 

\emph{Step $(k+1,1)$}.
Let $w,\hat w\in \cW_2$ be words of length $|w|=k$ and $|\hat w|=k-1$. We define
\[ 
R_{[w,\hat wj]} \colon T_{[w,\hat wj]} \to \tau_{w} \times (\hat{\sigma}_{\hat wj},d_{\hat{\sigma}_{\hat w j}})
\] 
for $j=1,2$ by
\[
R_{[w,\hat w j]} = (G_{\sigma_{w}, \hat{\d}_{k},\hat{\sigma}_{\hat w j}})^{-1} \circ R_{[w,\hat w]}.
\]
Let $\sigma_{w1}$ and $\sigma_{w2}$ be smooth simple closed curves in $\tau_{w}$ as in Step $k+1$ of Bing's shrinking procedure, and $\d_{k+1} <\hat{\d}_{k}$ be small enough so that the diffeomorphisms $e_{\sigma_{wi},\d_{k+1}}$ and $\hat{e}_{\hat{\sigma}_{\hat w j},\d_{k+1}}$ of Lemma \ref{lemma:TNT} are $2^{2^{-2k-1}}$-bilipschitz for $i,j\in \{1,2\}$. 

For $i=1,2$, let $\tau_{w i} = B^3(\sigma_{wi},\d_{k+1})$ and $\phi_{wi} \colon \tau_{w} \to \tau_{wi}$ be a diffeomorphism. Define also, for $j=1,2$,
\[ 
\Phi_{[wi,\hat wj]} \colon T_{[w,\hat wj]} \to T_{[w,\hat wj]}
\]
by
\[ 
\Phi_{[wi,\hat wj]} = R_{[w,\hat wj]}^{-1}\circ (\phi_{wi}\times\id)\circ R_{[w,\hat w j]},
\] 
and set
\[ 
T_{[wi,\hat wj]} = \Phi_{[wi,\hat wj]}(T_{[w,\hat wj]}) \subset T_{[w,\hat wj]}.
\] 

\medskip

\emph{Step $(k+1,2)$}.
For $w$ and $\hat w$ in $\cW_2$ of length $k$, define
\[
R_{[wi,\hat w]} \colon T_{[wi,\hat w]} \to (\sigma_{wi},d_{\sigma_{wi}}) \times \hat{\tau}_{\hat w}\] 
for $i=1,2$,
by
\[
R_{[wi,\hat w]} = G_{\sigma_{wi}, \d_{k+1},\hat{\sigma}_{\hat w}} \circ R_{[w,\hat w]}.
\]
Let $\sigma_{\hat w 1}$ and $\sigma_{\hat w 2}$ be two smooth simple closed curves in $\hat{\tau}_{\hat w}$ as in Step $k+1$ of Bing's shrinking procedure, and $\hat{\d}_{k+1} <\d_{k+1}$ be small enough so that the diffeomorphisms $e_{\sigma_{wi},\hat{\d}_{k+1}}$ and $e_{\hat{\sigma}_{\hat w j},\hat{\d}_{k+1}}$ of Lemma \ref{lemma:TNT} are $2^{2^{-2k-2}}$-bilipschitz for $i=1,2$. 

For each $j\in\{1,2\}$, let $\hat{\tau}_{\hat w j} = B^3(\hat{\sigma}_{\hat w j},\hat{\d}_{k+1})$ and $\hat{\phi}_{\hat w j} \colon \hat{\tau}_{\hat w} \to \hat{\tau}_{\hat w j}$ be a diffeomorphism. Define now 
\[ 
\Phi_{[wi, \hat wj]} \colon T_{[wi, \hat w]} \to T_{[wi, \hat w]} 
\]
for $i,j\in \{1,2\}$ by
\[
\Phi_{[wi, \hat wj]} = R_{[wi,\hat w]}^{-1}\circ (\id\times\hat{\phi}_{wi}) \circ R_{[wi,\hat wj]} 
\] 
and set
\[ 
T_{[wi,\hat wj]} = \Phi_{[wi,\hat wj]}(T_{[wi,\hat w]}) \subset T_{[wi,\hat w]}
\] 
for each $i$ and $j$.

\medskip

Suppose we have completed Step $(m,2)$ and let $w=i_1\cdots i_m$ and $\hat w = j_1\cdots j_m$ be words in $\cW_2$. By the choice of radii $\delta_\ell$ and $\hat \delta_\ell$ for $\ell = 1,\ldots, m$, the diffeomorphism 
\[
R_{[w,j_1\cdots j_{m-1}]} \colon T_{[w,\hat w]} \to B^3(\sigma_{w},\d_m) \times (\hat{\sigma}_{\hat w}, d_{\hat{\sigma}_{\hat w}}),
\]
is $(8/\delta_0)$-bilipschitz. Indeed, denote, for each $\ell \in \{1,\ldots,m\}$,$w_\ell = i_1\cdots i_\ell$ and $\hat w_\ell = j_1\cdots j_\ell$. Then
\begin{eqnarray*}
R_{[w,\hat w_{m-1}]} &=& G_{\sigma_{w}, \d_{m},\hat{\sigma}_{\hat w_{m-1}}} \circ R_{[w_{m-1},\hat w_{m-1}]} \\
&=& G_{\sigma_{w}, \d_{m},\hat{\sigma}_{\hat w_{m-1}}} \circ (G_{\sigma_{w_{m-1}}, \hat{\d}_{m-1},\hat{\sigma}_{\hat w_{m-1}}})^{-1}\circ R_{[w_{m-1},\hat w_{m-2}]} \\
&=& G'_m \circ \cdots \circ G'_2 \circ G_{\sigma_{i_1}, \delta_1, \hat \sigma_0} \circ R_0,
\end{eqnarray*}
where
\[
G'_\ell = G_{\sigma_{w_\ell}, \d_{\ell},\hat{\sigma}_{\hat w_{\ell-1}}} \circ (G_{\sigma_{w_{\ell-1}}, \hat{\d}_{\ell-1},\hat{\sigma}_{\hat w_{\ell-1}}})^{-1}
\]
for each $\ell = 2,\ldots, m$.

Since $\diam{\sigma_{w}} < \varepsilon\d_0/20$ and $\diam{\hat{\sigma}_{\hat w}} < \varepsilon\d_0/20$, 
\[ 
\diam{T_{[w,\hat w]}} \leq 8(\d_0)^{-1}(\diam{\sigma_{w}} + \diam{\hat{\sigma}_{\hat w}} + \d_m) < \varepsilon.
\]
This concludes the proof of Lemma \ref{lemma:BB_shrink}.
\end{proof}

\begin{proof}[{Proof of Proposition \ref{prop:BB_Bing}}]
The proof of Proposition \ref{prop:BB_Bing} is identical to the discussion in \cite[Section 3.III]{BingR:Homb3s}. By Lemma \ref{lemma:BB_shrink}, there exists $k_1 \in \N$ and a diffeomorphism $\hat{\varrho}_1\colon \S^n \to \S^n$ which leaves each point of $\S^n \setminus (T_1\cup T_2)$ fixed and maps each torus $T_w$, for $|w|=k_1$, into a set of diameter less than $1$. Then, by uniform continuity of $(\hat{\varrho}_1)^{-1}$, there exists $\varepsilon>0$ for which a set $E\subset \S^n$ has diameter less than $1/2$ if $\diam{\hat{\varrho}_1(E)}< \varepsilon$. Reapplying Lemma \ref{lemma:BB_shrink}, we find an integer $k_2 > k_1$ and a diffeomorphism $\hat{\varrho}_2' \colon \S^n \to \S^n$ which leaves each point of $\S^n \setminus {\bigcup_{|w|=k_1} T_w}$ fixed and maps each torus $T_w$, for $|w|=k_2$, into a set of diameter less than $\varepsilon$. Then $\hat{\varrho}_2 = \hat{\varrho}_1\circ \hat{\varrho}_2'$ is a diffeomorphism of $\S^n$ into itself which maps each torus $T_w$, for $|w|=k_1$, to a set of diameter $1$ and each torus $T_w$, for $|w|=k_2$, to a set of diameter $1/2$.

Iterating this procedure, we find a sequence of diffeomorphisms $\hat{\varrho}_1,\hat{\varrho}_2,\dots$ and integers $k_1 < k_2 < \cdots$ for which $\hat{\varrho}_{m+1}|\S^n\setminus \bigcup_{|w|=k_m} \T_w = \hat{\varrho}_m|\S^n\setminus \bigcup_{|w|=k_m} \T_w$, and $\diam{\hat{\varrho}_m(\T_w)} < 1/m$ for each $|w|=k_m$. The limit $\hat \rho = \lim_{m\to\infty} \hat{\varrho}_m$ is a map $\hat \rho \colon \S^n \to \S^n$ for which $\hat \rho(\cS(\fI_{B,n}))$ is a Cantor set and the restriction $\hat \rho|S^n \setminus \cS(\fI_{B,n}) \colon \S^n \setminus \cS(\fI_{B,n}) \to \hat \rho(\S^n)\setminus \hat \rho(\cS(\fI_{B,n})$ is a diffeomorphism. The proof is complete.
\end{proof}

\subsection{An almost smooth metric on $\S^n$ associated to $\S^n/\mathrm{BB}$}
\label{sec:almost_smooth_metric}

As a direct corollary of Propositions \ref{prop:BB_Semmes} and \ref{prop:BB_Bing}, we obtain an almost smooth metric on $\S^n$ having the Bing--Blankinship Cantor set as a singular set.

\begin{cor}
\label{cor:metric_E}
Let $n \ge 3$, $\fI_{B,n}$ be a Bing--Blankinship initial package, and let $\hat \rho \colon \S^n \to \S^n$  be a map as in Proposition \ref{prop:BB_Bing}. Let also $E= \hat \rho(\cS(\fI_{B,n}))$.

Then, there exists a Cantor set $E\subset \S^n$ and a Riemannian metric $g$ in $\S^n\setminus E$ for which the completion of the associated length metric $d$ is Ahlfors $n$-regular and linearly locally contractible.
\end{cor}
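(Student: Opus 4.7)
The plan is to pull back the length structure from the embedding into $\S^{n+1}$ provided by Proposition \ref{prop:BB_Semmes}, transfer it to $\S^n \setminus E$ via the shrinking homeomorphism of Proposition \ref{prop:BB_Bing}, and recognize the completion as a Semmes metric on $\S^n/\mathrm{BB}$ so that Lemmas \ref{lem:Ahlfors} and \ref{lem:LLC} apply directly.

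First, I fix a scaling parameter $\lambda \in (0, 2^{-1/n})$ and let $\tilde\rho \colon \S^n \to \S^{n+1}$ be the map furnished by Proposition \ref{prop:BB_Semmes} for this $\lambda$, and $\hat\rho \colon \S^n \to \S^n$ the map of Proposition \ref{prop:BB_Bing}. By construction, $\hat\rho$ restricts to a diffeomorphism $\S^n \setminus \cS(\fI_{B,n}) \to \S^n \setminus E$, where $E = \hat\rho(\cS(\fI_{B,n}))$ is a Cantor set, and $\tilde\rho$ restricts to a smooth embedding on the same domain. I define $g$ on $\S^n \setminus E$ to be the pullback of the standard Riemannian metric of $\S^{n+1}$ under the smooth embedding
\[
\tilde\rho \circ \bigl(\hat\rho | \S^n \setminus \cS(\fI_{B,n})\bigr)^{-1} \colon \S^n \setminus E \to \S^{n+1}.
\]
Let $d_g$ denote the associated length distance on $\S^n \setminus E$ and $d$ its completion on $\S^n$.

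Second, I identify $(\S^n, d)$ isometrically with $(\S^n/\mathrm{BB}, d')$, where $d'$ is the quotient length metric induced on $\S^n/\mathrm{BB}$ by the Euclidean length structure of $\tilde\rho(\S^n) \subset \S^{n+1}$. Since $\hat\rho$ factors as the canonical quotient $\S^n \to \S^n/\mathrm{BB}$ followed by a homeomorphism $\S^n/\mathrm{BB} \to \S^n$, and the Cauchy completion of $g$ on $\S^n \setminus E$ collapses precisely the components of $\cS(\fI_{B,n})$ (because each Blankinship ring $\T_w$ has $\tilde\rho$-image of diameter at most $L\lambda^{|w|}\diam(\vartheta(\T))$ by Proposition \ref{prop:BB_Semmes}(ii)), this identification is natural. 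The scaling property (ii) of Proposition \ref{prop:BB_Semmes} then says exactly that $d'$ is a Semmes metric on the decomposition space $\T/G_{\fI_{B,n}} = \S^n/\mathrm{BB}$ with scaling constant $\lambda$.

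Third, Ahlfors $n$-regularity follows from Lemma \ref{lem:Ahlfors} applied to the initial package $\fI_{B,n}$ with $p = 2$, since the choice $\lambda < 2^{-1/n} = p^{-1/n}$ fulfills the hypothesis. Linear local contractibility is obtained from Lemma \ref{lem:LLC}; its hypothesis requires that $\tilde\varphi_{wi}(\T) \subset \tilde\varphi_w(\T)$ contract in the enclosing ring for each $w \in \cW_2$ and $i \in \{1,2\}$. By the self-similar structure, this reduces to contractibility of each of the two Blankinship rings in the ambient $n$-tube $\T = \B^2 \times (\S^1)^{n-2}$. The linking pattern of the Bing double was explicitly designed so that each component is null-homotopic in the ambient solid torus $\B^2 \times \S^1$; the remaining torus factors $(\S^1)^{n-3}$ are only cyclically permuted by $\psi$ and thus do not obstruct the contraction.

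The main obstacle is the second step: verifying rigorously that the completion of the pulled-back Riemannian length metric on $\S^n \setminus E$ coincides with the Semmes metric obtained as a quotient of the Euclidean length structure of $\tilde\rho(\S^n)$. This requires checking that sequences in $\S^n \setminus E$ that are Cauchy for $d_g$ and converge to a point of $E$ correspond, under the shrinking homeomorphism $\hat\rho$, to sequences whose $\tilde\rho$-images are Cauchy in $\S^{n+1}$ and converge to a single point of $\tilde\rho(\cS(\fI_{B,n}))$. The uniform diameter bound on rings forced by the $\lambda$-contracting scaling in Proposition \ref{prop:BB_Semmes}(ii) provides exactly the quantitative control needed, after which both Ahlfors regularity and LLC are automatic.
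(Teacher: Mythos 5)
Your overall strategy is the same as the paper's: pull back the round metric of $\S^{n+1}$ under $\tilde\rho\circ(\hat\rho|\S^n\setminus\cS(\fI_{B,n}))^{-1}$, take the completion $d$ of the induced length metric, transport it to a Semmes metric $d'$ on $\T/G_{\fI_{B,n}}$ with scaling constant $\lambda\in(0,2^{-1/n})$, and invoke Lemmas \ref{lem:Ahlfors} and \ref{lem:LLC}. However, your verification of the hypothesis of Lemma \ref{lem:LLC} contains a genuine error. You claim that $\T_i=\tilde\varphi_i(\T)$ contracts in $\T=\B^2\times(\S^1)^{n-2}$ because the Bing double cores are null-homotopic and ``the remaining torus factors $(\S^1)^{n-3}$ are only cyclically permuted by $\psi$ and thus do not obstruct the contraction.'' This is precisely the opposite of what happens for $n\ge 4$. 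Write $H_1(\T)\cong\Z^{n-2}$ with basis $e_1,\dots,e_{n-2}$ corresponding to the $\S^1$-factors. Tracking coordinates through $\tilde\varphi_i=(\varphi_i\times\id)\circ(\id\times\psi)$, one sees that $(\tilde\varphi_i)_*$ on $H_1$ is the nilpotent shift $e_j\mapsto e_{j+1}$ for $j<n-2$ and $e_{n-2}\mapsto 0$: only the $\S^1$-direction absorbed into $\varphi_i$ dies, while the other $n-3$ survive and are merely relabelled. This matrix has rank $n-3$, so the inclusion $\T_i\hookrightarrow\T$ is not null-homotopic, and the hypothesis of Lemma \ref{lem:LLC} is \emph{not} satisfied in one step for the Bing--Blankinship package when $n\ge 4$.

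The correct observation is that the shift matrix has nilpotency index $n-2$, so $\T_w\hookrightarrow\T$ is zero on $H_1$ (and hence null-homotopic, since $\T$ has abelian $\pi_1$ and contractible universal cover) exactly when $|w|\ge n-2$. This delayed contraction is still enough for linear local contractibility---the constant just picks up an extra factor $\lambda^{-(n-2)}$---but your proof as written invokes a one-step condition that fails. Note also that the paper itself only cites Lemma \ref{lem:LLC} without checking its hypothesis, which as literally stated requires one-step contraction; the necessary relaxation is what the reference \cite{PW} supplies. Your attempt to fill in this detail is welcome, but the filler is false and needs to be replaced by the $(n-2)$-step argument.
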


\begin{proof}
Let $\lambda \in (0,2^{-1/n})$ and let $\tilde \rho \colon \S^n\to \S^{n+1}$ be the mapping in Proposition \ref{prop:BB_Semmes}. We set $g$ to be the Riemannian metric $(\tilde \rho \circ \hat \rho^{-1}|\S^n\setminus E)^*g_0$, where $g_0$ is the Riemannian metric on $\S^{n+1}$. Let $d$ be the completion of the length metric associated to $g$. 

Let $\rho'\colon \S^n/\mathrm{BB}\to \S^n$ be the homeomorphism in Proposition \ref{prop:BB_Bing} and $d'$ the pull-back metric $d'(x,y) = d((\rho'\circ \vartheta')(x),(\rho'\circ \vartheta')(y))$ on $\T/G_{\fI_{B,n}}$. Then $d'$ is a Semmes metric on $\S^n/\mathrm{BB}$, with respect to the initial package $\fI_{B,n}$, of scaling constant $\lambda$. Thus $(\T/G_{\fI_{B,n}},d')$ is Ahlfors $n$-regular and LLC by Lemmas \ref{lem:Ahlfors} and \ref{lem:LLC}. Thus also $(\mathbb{S}^n,d)$ is Ahlfors $n$-regular and LLC; see e.g.\;\cite[Proposition 7.8]{PW} and \cite[Proposition 7.9]{PW} for details.
\end{proof}

\section{Virtually interior-essential maps}
\label{sec:viemaps}

In this section we prove a Freedman-Skora type result that, for a virtually interior essential map $\Phi \colon (\B^2,\partial \B^2)\to (\T,\partial \T)$, the number of essential intersections $\Phi(\B^2)\cap \bigcup_{|w|=k}\tilde \varphi_w(\T)$ is at least $2^k$. We begin by introducing terminology.

Let $\omega \subset \B^2$ be a compact and connected $2$-manifold with boundary. The smallest $2$-cell $D_\omega$ in $\B^2$ containing $\omega$ is the \emph{hull of $\omega$ in $\B^2$}, that is, $D_\omega$ is the unique $2$-cell in $\B^2$ containing $\omega$ for which $\partial D_\omega$ is a component of $\partial \omega$. We call $\partial D_\omega$ the \emph{outer boundary of $\omega$} and $\partial \omega \setminus \partial D_\omega$ the \emph{inner boundary of $\omega$}.

Let $M$ be an $n$-manifold with boundary. A map of pairs $\Phi \colon (\omega,\partial \omega) \to (M,\partial M)$ is \emph{interior-inessential} if there exists a map $\Phi' \colon \omega \to \partial M$ for which $\Phi'|\partial \omega = \Phi|\partial \omega$. Otherwise, $\Phi$ is \emph{interior-essential}. Further, a map of pairs $\Phi \colon (\omega, \partial \omega)\to (M,\partial M)$ is \emph{virtually interior-essential} if there exists an interior-essential extension $\hat \Phi \colon (D_\omega,\partial D_\omega) \to (M,\partial M)$ of $\Phi$ satisfying $\hat \Phi(D\setminus \omega)\subset \partial M$.

Let $N\subset \mathrm{int}M$ be an $n$-manifold with boundary and $\omega \subset \B^2$ a compact and connected $2$-manifold with boundary. A map $\Phi \colon (\omega,\partial \omega) \to (M,\partial M)$ \emph{intersects $N$ transversely} if $\Phi^{-1}(\partial N)$ is a closed $1$-manifold, i.e. $\Phi^{-1}(\partial N)$ is a finite pair-wise disjoint collection of circles in $\omega$. In particular, components of $\Phi^{-1}(N)$ are compact and connected $2$-manifolds with boundary if $\Phi$ intersects $N$ transversely. Note that each map $(\omega,\partial \omega) \to (M,\partial M)$ is homotopic, relative to the boundary $\partial \omega$, to a map which intersects $N$ transversely.

For a mapping $\Phi \colon \omega \to M$ intersecting $N$ transversely, we denote by $\Omega(\Phi;N)$ the set of all components $\omega'\subset \Phi^{-1}(N)$ for which $\Phi|\omega' \colon (\omega',\partial \omega') \to (N,\partial N)$ is interior essential. A component $\omega'\in \Omega(\Phi;N)$ is \emph{innermost} if $D_{\omega'}\setminus\omega'$ has no element in $\Omega(\Phi;N)$. We emphasize that $\Omega(\Phi;N)$ is a finite set, since $\Phi^{-1}(\partial N)$ has finitely many components.

The main result of this section is the following proposition. Note that, although not explicitly mentioned, we consider a fixed initial package $\fI_{B,n}=(\T;\tilde \varphi_1,\tilde \varphi_2)$ for the Bing-Blankinship decomposition.

\begin{prop}
\label{prop:vie}
Let $\omega\subset \B^2$ be a compact and connected $2$-manifold, and suppose $\Phi \colon (\omega,\partial \omega)\to (\T,\partial \T)$ is a virtually interior-essential map meeting $\T_1\cup \T_2$ transversely. Then there exists at least two virtually interior essential components in $\Omega(\Phi;\T_1\cup \T_2)$.
\end{prop}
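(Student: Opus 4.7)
The plan is to extend $\Phi$ via its virtual interior-essentiality to a map $\hat\Phi\colon(D_\omega,\partial D_\omega)\to(\T,\partial\T)$ which is interior-essential and satisfies $\hat\Phi(D_\omega\setminus\omega)\subset \partial\T$. Since $\hat\Phi$ sends $D_\omega\setminus\omega$ into $\partial\T$, which is disjoint from $\T_1\cup\T_2$, the transversality hypothesis on $\Phi$ passes directly to $\hat\Phi$ and $\hat\Phi^{-1}(\T_1\cup\T_2)=\Phi^{-1}(\T_1\cup\T_2)$. The argument then proceeds by a homological intersection count in the spirit of Freedman--Skora \cite[Lemma 2.5]{FreedmanM:Strags}.

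First I would simplify $\hat\Phi$. For any interior-inessential component $\omega'$ of $\hat\Phi^{-1}(\T_i)$, the restriction $\hat\Phi|\partial\omega'$ admits an extension into $\partial\T_i$, so a homotopy of $\hat\Phi$ supported in a neighborhood of $\omega'$ inside $\omega$ pushes $\omega'$ into $\partial\T_i$ and then off $\T_i$ through a collar. Performing this inductively, starting from innermost interior-inessential components, I reduce to the case where every component of $\hat\Phi^{-1}(\T_1\cup\T_2)$ lies in $\Omega(\Phi;\T_1\cup\T_2)$. A further surgery step on essential components of positive genus (compressing handles along null-homotopies in $\T_i$) then lets me assume each essential component is a topological disk; for such a disk, interior-essentiality and virtual interior-essentiality coincide, since there are no inner boundary circles to extend.

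Next I carry out the homological count. The class $[\hat\Phi]\in H_2(\T,\partial\T)\cong\Z$ is a generator, since $\hat\Phi$ is interior-essential. Under excision $H_2(\T,\T\setminus\interior\T_i)\cong H_2(\T_i,\partial\T_i)$ and Poincar\'e--Lefschetz duality, the natural map $H_2(\T,\partial\T)\to H_2(\T_i,\partial\T_i)\cong\Z$ is identified with the pullback $\tilde\varphi_i^*\colon H^{n-2}(\T)\to H^{n-2}(\T)$. Writing $\tilde\varphi_i=(\varphi_i\times\id_{(\S^1)^{n-3}})\circ(\id_{\B^2}\times\psi)$ and using the $H^1$-basis $e_1,\ldots,e_{n-2}$ of $(\S^1)^{n-2}$, a direct calculation gives $\tilde\varphi_i^*(e_1)=0$ (because the Bing doubling $\varphi_i$ sends the first $\S^1$-generator to a commutator, which vanishes in $H^1$) and $\tilde\varphi_i^*(e_j)=e_{j-1}$ for $j\ge 2$; hence $\tilde\varphi_i^*(e_1\cup\cdots\cup e_{n-2})=0$. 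Consequently, the signed sum of contributions from the disk components of $\hat\Phi^{-1}(\T_i)$ to $H_2(\T_i,\partial\T_i)$ is zero for each $i$.

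Finally, the Bing--Blankinship construction prevents the meridional class of $\T$ from being represented geometrically inside $\T\setminus\interior(\T_1\cup\T_2)$: adapting Bing's original non-contractibility argument through the cyclic permutation symmetry, no spanning disk of $\T$ can be homotoped off $\T_1\cup\T_2$. Hence at least one virtually interior-essential disk component must appear in $\Omega(\Phi;\T_1\cup\T_2)$. Combined with the vanishing signed sum above, which forces cancellation when any such component is present, there must be at least two virtually interior-essential components. I expect the main obstacle to be the preliminary surgery step reducing essential components to topological disks: ensuring that the surgeries stay transverse to $\T_1\cup\T_2$ and do not create spurious new essential components requires careful control, and bypassing it would demand a more intricate accounting of how higher-genus essential components contribute to the $H_2$-intersection count.
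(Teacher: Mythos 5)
Your overall strategy---extend $\Phi$ to an interior-essential $\hat\Phi$ on the hull disk, push its class into $H_2(\T_1\cup\T_2,\partial(\T_1\cup\T_2);\Z)$, observe it vanishes, and conclude from an intersection count---is the same Freedman--Skora framework the paper uses. Your cohomological derivation of the vanishing is a legitimate alternative to the paper's Lemma~\ref{lem:homology0}: the paper exhibits an explicit representative $g\colon (\B^2,\partial\B^2)\to(T,\partial T)$ meeting $T_1$ in two oppositely oriented $2$-cells, whereas you read off $\tilde\varphi_i^*(e_1\cup\cdots\cup e_{n-2})=0$ in $H^{n-2}(\T;\Z)$ from $\varphi_i^*(e_1)=0$ (the Bing-double components have winding number zero in $\B^2\times\S^1$) together with the cyclic permutation $\psi$. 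Both are recording the same fact. (One small inaccuracy: interior-essentiality of $\hat\Phi$ only gives $[\hat\Phi]=m[G]$ with $m\neq 0$, not that $[\hat\Phi]$ generates; this does not affect the argument.)

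The gap you flag at the end is, however, exactly where the substantive work lives, and your treatment of it does not close. Two separate problems. First, compressing an essential planar component to a disk along null-homotopies in $\T_i$ requires knowing its inner boundary circles actually contract in $\T_i$ rather than in the complement or in the other tube; this is a genuine $3$-manifold fact about the Bing pattern, not a formality, and the paper proves it by lifting to the universal cover of $\B^2\times\S^1\times\R^{n-3}$ and tracking which chain components $t_{1,k}$, $t_{2,k}$ the circles $C^+_j$ must land near. Second, and more structurally, the surgered map is no longer $\Phi$, so the disk components of its preimage of $\T_1\cup\T_2$ are not elements of $\Omega(\Phi;\T_1\cup\T_2)$; your final sentence asserts a conclusion about the original $\Phi$ from data about a different map. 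The paper avoids both problems by arguing by contradiction: it first shows (Lemma~\ref{lem:ie}) that any \emph{innermost} member of $\Omega(\Phi;\T_1\cup\T_2)$ is automatically virtually interior-essential without any surgery; it then assumes there is only one innermost member, observes that the remaining $\Omega$-members then stack as a nested chain of annuli around a single disk, homotopes those annuli away one at a time (this is where the universal-cover linking argument appears), and obtains a modified map whose $\Omega$-set is a single disk, contradicting the vanishing of the pushed-forward homology class via Lemma~\ref{lem:not_0}. In that framing the modified map is only a device for the contradiction, and the counted components are the original $\Phi$'s innermost ones. Your proposal would need to be recast in this contradiction form and the annulus-compression step carried out in detail for it to go through.
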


Since $(\T_w,\T_{w1}\cup \T_{w2})$ is homeomorphic, as pairs, to $(\T,\T_1\cup \T_2)$, a simple induction argument yields the following corollary.
\begin{cor}
\label{cor:vie}
Let $\Phi \colon (\B^2,\partial \B^2) \to (\T, \partial \T)$ be an interior essential map which meets each $\T_w$, for $w\in \W_2$, transversely. Then
\[
\# \Omega(\Phi; \bigcup_{|w|=k} \T_w) \ge 2^k
\]
for each $k\ge 0$.
\end{cor}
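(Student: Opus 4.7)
The plan is to prove the corollary by induction on $k$, and specifically to induct on the \emph{strengthened} statement that $\Omega(\Phi; \bigcup_{|w|=k}\T_w)$ contains at least $2^k$ components which are virtually interior-essential (not merely interior-essential). Without this strengthening the induction cannot feed Proposition \ref{prop:vie}, whose hypothesis is virtual interior-essentiality. The base case $k=0$ is immediate: the only component of $\Phi^{-1}(\T)=\B^2$ is $\B^2$ itself, its hull $D_{\B^2}$ is $\B^2$, and the trivial extension witnesses that $\Phi$ is virtually interior-essential.

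For the inductive step, given a virtually interior-essential component $\omega' \in \Omega(\Phi; \bigcup_{|w|=k}\T_w)$, I would first observe that by disjointness of the rings at level $k$, $\omega'$ is contained in $\Phi^{-1}(\T_w)$ for a unique word $w$ of length $k$. The diffeomorphism $\tilde \varphi_w \colon \T \to \T_w$ carries the pair $(\T,\T_1\cup \T_2)$ onto $(\T_w, \T_{w1}\cup \T_{w2})$, so the conjugated map $\tilde \varphi_w^{-1} \circ \Phi|\omega' \colon (\omega', \partial \omega') \to (\T, \partial \T)$ inherits virtual interior-essentiality and transversality to $\T_1\cup \T_2$ from the corresponding properties of $\Phi$. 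Applying Proposition \ref{prop:vie} to the conjugated map then produces at least two virtually interior-essential sub-components of $\Omega(\Phi|\omega'; \T_{w1}\cup \T_{w2})$. Summing over the $2^k$ components available at level $k$ supplies $2^{k+1}$ sub-components at level $k+1$.

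The one bookkeeping point that needs verification — and which I expect to be the main (though mild) obstacle — is that the sub-components produced locally inside each $\omega'$ genuinely are components of the \emph{global} preimage $\Phi^{-1}(\bigcup_{|w'|=k+1}\T_{w'})$, and that contributions from distinct $\omega'$ are pairwise distinct. Both facts follow from the disjointness of the rings $\T_{w'}$ at a fixed level together with the fact that each $\omega'$ is a full component of $\Phi^{-1}(\T_w)$: any component of $\Phi^{-1}(\T_{wj})$ meeting $\omega'$ is forced to lie inside $\omega'$, so components of $\omega' \cap \Phi^{-1}(\T_{wj})$ coincide with components of $\Phi^{-1}(\T_{wj})$ contained in $\omega'$. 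Distinct $\omega'$ at level $k$ are disjoint, so their sub-components are pairwise disjoint as well, completing the count.
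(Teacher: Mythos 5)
Your proposal is correct and is precisely the ``simple induction argument'' the paper alludes to but does not write out; you have correctly identified the only subtlety, namely that the inductive invariant must track \emph{virtual} interior-essentiality (not merely interior-essentiality) so that Proposition~\ref{prop:vie} can be invoked at each step, a point the paper leaves implicit. The base case, the use of the pair homeomorphism $(\T_w,\T_{w1}\cup\T_{w2})\cong(\T,\T_1\cup\T_2)$ to conjugate to the standard package, and the disjointness bookkeeping all match the intended argument.
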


The proof of Proposition \ref{prop:vie} is based on a homological argument as in Freedman and Skora \cite[Lemma 2.5]{FreedmanM:Strags}. Before the proof we make first some well-known preliminary observations. 

\begin{lem} 
\label{lem:link1}
The homomorphism $\pi_1(\partial \T)\to \pi_1(\T\setminus (\T_1\cup \T_2))$, induced by the inclusion $\partial \T \to \T \setminus (\T_1\cup \T_2)$, is a monomorphism. Furthermore, the homomorphism $\pi_1(\partial \T_i)\to \pi_1(\T\setminus \interior(\T_1 \cup \T_2))$, induced by the inclusion $\partial \T_i \to \T\setminus \interior(\T_1\cup \T_2)$, is a monomorphism for $i=1,2$. 
\end{lem}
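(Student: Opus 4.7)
The strategy is to exploit the product structure of the Bing--Blankinship construction to reduce both claims to the three-dimensional case, and then to recognize the resulting $3$-manifold as the exterior of the classical three-component augmented Bing link in $\S^3$, whose boundary tori are well known to be incompressible. Unwinding the definition $\tilde\varphi_i = (\varphi_i\times\id_{(\S^1)^{n-3}})\circ(\id_{\B^2}\times\psi)$, one reads off that, viewing $\T = T_B\times(\S^1)^{n-3}$ with $T_B = \B^2\times\S^1$, we have $\T_i = \varphi_i(T_B)\times(\S^1)^{n-3}$ for $i=1,2$. Consequently
\[
\T\setminus(\T_1\cup\T_2) = C_B\times(\S^1)^{n-3} \quad\text{and}\quad \T\setminus\interior(\T_1\cup\T_2) = C'_B\times(\S^1)^{n-3},
\]
where $C_B = T_B\setminus(\varphi_1(T_B)\cup\varphi_2(T_B))$ and $C'_B = T_B\setminus\interior(\varphi_1(T_B)\cup\varphi_2(T_B))$; similarly $\partial\T = \partial T_B\times(\S^1)^{n-3}$ and $\partial\T_i = \partial\varphi_i(T_B)\times(\S^1)^{n-3}$. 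All relevant inclusions respect these product decompositions, and since $\pi_1$ commutes with direct products, the lemma reduces to showing that the inclusions $\partial T_B\hookrightarrow C_B$ and $\partial\varphi_i(T_B)\hookrightarrow C'_B$ are $\pi_1$-injective for $i=1,2$.

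For these three-dimensional statements I would embed $T_B$ as the standard unknotted solid torus in $\S^3$ and let $K_0$ denote the core of the complementary solid torus $\S^3\setminus\interior T_B$. Writing $K_i$ for the core of $\varphi_i(T_B)$, the three-component link $L = K_0\cup K_1\cup K_2$ is the classical \emph{augmented Bing link}, and $C'_B$ is precisely the exterior of $L$ in $\S^3$, with three torus boundary components $\partial T_B$, $\partial\varphi_1(T_B)$ and $\partial\varphi_2(T_B)$. It is a classical fact of $3$-manifold topology that $L$ is non-split and that its exterior is an irreducible (in fact hyperbolic) $3$-manifold whose three boundary tori are incompressible. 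Since $C_B$ is homotopy equivalent to $C'_B$, both monomorphism claims follow at once.

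The main obstacle is this input from $3$-manifold topology. In the actual write-up I would either cite a standard reference on Bing doubling or prove incompressibility directly via Papakyriakopoulos' Loop Theorem: a hypothetical compression disk for one of the boundary tori, combined with an appropriate meridional disk inside the corresponding solid torus, would produce an embedded $2$-sphere in $\S^3\setminus L$ separating one component of $L$ from the other two, contradicting the non-splittability of $L$.
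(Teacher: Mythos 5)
Your reduction to $n=3$ via the product decomposition $\T = (\B^2\times\S^1)\times(\S^1)^{n-3}$, $\T_i = \varphi_i(\B^2\times\S^1)\times(\S^1)^{n-3}$, and the fact that $\pi_1$ respects products is exactly the reduction the paper makes; the paper then simply cites DeGryse--Osborne (Lemmas 2.4 and 2.5) for the three-dimensional statement, which is the fallback you also offer. So the overall strategy matches.

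Your alternative sketch for the $3$-dimensional input is essentially correct in outline: identifying $C'_B$ with the exterior of the augmented Bing link (a realization of the Borromean rings) and appealing to incompressibility of its boundary tori is a legitimate route. However, the Loop-Theorem paragraph is not quite right as stated. If the compressing disk $D$ has $\partial D$ a meridian of the relevant solid torus, then capping $D$ with the meridional disk produces a sphere that meets $L$ in exactly one point --- that sphere is not contained in $\S^3\setminus L$, and the contradiction there is the odd intersection parity, not splittability. The splittability contradiction belongs to the case where $\partial D$ is a longitude, but in that case there is no meridional disk to attach; instead one takes the boundary of a regular neighborhood of $D\cup K_i$ to obtain the separating sphere. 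A complete compressibility argument needs both cases (plus ruling out other slopes via the vanishing pairwise linking numbers), and your phrase ``combined with an appropriate meridional disk'' conflates them. This does not undermine the approach --- and the citation route you also propose is exactly what the paper does --- but the sketch as written does not close.
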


\begin{proof}
For $n=3$, i.e.\;for the Bing double, see \cite[Lemmas 2.4 and 2.5]{DeGryseD:WilCs}. For $n\ge 3$, it suffices to observe that 
\[
\T = (\B^2\times \S^1)\times (\S^1)^{n-3}
\]
and
\[
\T_i = \varphi_i(\B^2\times \S^1)\times (\S^1)^{n-3},
\]
where $\varphi_i \colon \B^2 \times \S^1 \to \B^2\times \S^1$, for $i=1,2$, is the embedding in the initial package of the Bing double.
\end{proof}

\begin{cor}\label{cor:ie}
Let $\Phi\colon (\B^2,\partial \B^2) \to (\T,\partial \T)$ be an interior essential map. Then $\Phi^{-1}(\T_1\cap \T_2)\ne \emptyset$. Furthermore, $\Omega(\Phi;\T_1\cup \T_2)\ne \emptyset$ if $\Phi$ meets $\T_1\cup \T_2$ transversely.
\end{cor}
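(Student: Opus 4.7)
Since $\T_1\cap \T_2=\emptyset$ by the disjointness of the images of $\tilde\varphi_1$ and $\tilde\varphi_2$, I read the first assertion as $\Phi^{-1}(\T_1\cup\T_2)\neq \emptyset$. My plan is to prove both parts by contradiction via a common mechanism: assuming the conclusion fails, I will construct a continuous extension $\tilde\Phi\colon \B^2\to \T\setminus \interior(\T_1\cup\T_2)$ of $\Phi|\partial\B^2$. This places $[\Phi|\partial\B^2]$ in the kernel of $\pi_1(\partial\T)\to \pi_1(\T\setminus\interior(\T_1\cup\T_2))$. Since the inclusion $\T\setminus(\T_1\cup\T_2)\hookrightarrow \T\setminus\interior(\T_1\cup\T_2)$ is a homotopy equivalence (obtained from a bicollar of each $\partial\T_i$ in $\T$), the first part of Lemma \ref{lem:link1} upgrades this to $[\Phi|\partial\B^2]=0$ in $\pi_1(\partial\T)$. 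Finally, $\partial\T\cong(\S^1)^{n-1}$ is a $K(\Z^{n-1},1)$, so any loop in $\partial\T$ that is null-homotopic in $\partial\T$ extends to a disk; in particular, $\Phi|\partial\B^2$ extends to a map $\B^2\to\partial\T$, contradicting the interior-essentiality of $\Phi$.

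For the first assertion, if $\Phi^{-1}(\T_1\cup\T_2)=\emptyset$, one takes $\tilde\Phi=\Phi$ directly, whose image lies in $\T\setminus(\T_1\cup\T_2)\subset\T\setminus\interior(\T_1\cup\T_2)$. For the second, assume transversality and $\Omega(\Phi;\T_1\cup\T_2)=\emptyset$. Then $\Phi^{-1}(\T_1\cup\T_2)$ is a finite disjoint union of compact connected $2$-manifolds with boundary $\omega_1,\ldots,\omega_N$; since $\T_1$ and $\T_2$ are disjoint and each $\omega_k$ is connected, $\Phi(\omega_k)\subset \T_{i_k}$ for a unique $i_k\in\{1,2\}$. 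Interior-inessentiality of each $\Phi|\omega_k\colon(\omega_k,\partial\omega_k)\to(\T_{i_k},\partial\T_{i_k})$ yields a map $\Phi'_k\colon \omega_k\to\partial\T_{i_k}$ with $\Phi'_k|\partial\omega_k=\Phi|\partial\omega_k$. Define $\tilde\Phi$ to equal $\Phi'_k$ on each $\omega_k$ and $\Phi$ on $\B^2\setminus\bigcup_k\interior(\omega_k)$; the pasting lemma gives continuity, and by construction $\tilde\Phi(\B^2)\subset\T\setminus\interior(\T_1\cup\T_2)$ with $\tilde\Phi|\partial\B^2=\Phi|\partial\B^2$ (using that $\T_1,\T_2\subset\interior\T$ in the Bing--Blankinship package, so the components $\omega_k$ lie in $\interior\B^2$).

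The argument is essentially formal once Lemma \ref{lem:link1} and the asphericity of the torus $\partial\T$ are invoked. The only step beyond elementary algebraic topology is the deformation retraction $\T\setminus\interior(\T_1\cup\T_2)\to \T\setminus(\T_1\cup\T_2)$ needed to align the statement of Lemma \ref{lem:link1} with the space in which $\tilde\Phi$ naturally lives; this is standard via the collar neighborhood theorem applied to the smooth codimension-$0$ submanifolds $\T_1,\T_2\subset\T$. I expect this alignment to be the only potentially distracting bookkeeping; the main content of the proof is the combination of the first part of Lemma \ref{lem:link1} with the fact that $\partial\T$ is a $K(\Z^{n-1},1)$.
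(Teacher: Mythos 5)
Your proposal is correct and follows essentially the same route as the paper: construct from the hypothesis a map of $\B^2$ into $\T\setminus\interior(\T_1\cup\T_2)$ extending $\Phi|\partial\B^2$, then invoke the first part of Lemma~\ref{lem:link1} to conclude $\Phi|\partial\B^2$ is null-homotopic in $\partial\T$, contradicting interior-essentiality. You simply supply details the paper leaves implicit — the patching of the inessential extensions over the components of $\Phi^{-1}(\T_1\cup\T_2)$, and the collar-based identification $\T\setminus\interior(\T_1\cup\T_2)\simeq\T\setminus(\T_1\cup\T_2)$ needed to line the construction up with Lemma~\ref{lem:link1} — and your reading of the typo $\T_1\cap\T_2$ for $\T_1\cup\T_2$ is the intended one.
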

\begin{proof}
Suppose $\Omega(\Phi;\T_1\cup \T_2)=\emptyset$. Then there is a map $\Phi' \colon (\B^2,\partial \B^2)\to (\T\setminus (\T_1\cup \T_2), \partial \T)$ for which $\Phi'|\partial \B^2 = \Phi|\partial \B^2$. Thus $\Phi'|\S^1$ is contractible in $\T\setminus (\T_1\cup \T_2)$, and, by Lemma \ref{lem:link1}, contractible in $\partial \T$. This contradicts the interior essentiality of $\Phi$.
\end{proof}

\begin{lem}
\label{lem:ie}
Suppose that $\Phi \colon (\B^2,\partial \B^2) \to (\T,\partial \T)$ meets $\T_1 \cup \T_2$ transversely and let $\omega\in \Omega(\Phi;\T_1\cup \T_2)$ be an innermost component. Then the restriction $\Phi|\omega \colon (\omega,\partial \omega)\to (\T_1\cup \T_2,\partial (\T_1\cup \T_2))$ is virtually interior essential. 
\end{lem}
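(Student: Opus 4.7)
The plan is to build the required extension $\hat\Phi$ by filling each complementary ``hole" disc of $\omega$ in its hull $D_\omega$ with a null-homotopy in $\partial(\T_1\cup\T_2)$, and then to derive interior essentiality of $\hat\Phi$ from the interior essentiality built into the hypothesis $\omega\in\Omega(\Phi;\T_1\cup\T_2)$ via Lemma \ref{lem:link1}.

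I would first observe that, since $\omega$ is connected and $\T_1,\T_2$ are disjoint, there is a single $s\in\{1,2\}$ with $\Phi(\omega)\subset \T_s$; in particular $\Phi$ sends every boundary circle of $\omega$ into $\partial\T_s$, and the same is true of any candidate inessential filling into $\partial(\T_1\cup\T_2)$. Write $D_\omega \setminus \omega = H_1 \sqcup \cdots \sqcup H_k$ for the open complementary discs of $\omega$, so that $\partial H_i$ is the $i$-th inner boundary circle of $\omega$. The key step is to show that each $\Phi|\partial H_i$ is null-homotopic in $\partial\T_s$. For this I would use innermost-ness: every component $\omega_j$ of $\Phi^{-1}(\T_1\cup\T_2)\cap H_i$ is a component of $\Phi^{-1}(\T_1\cup\T_2)$ lying in $D_\omega\setminus \omega$, hence is absent from $\Omega(\Phi;\T_1\cup\T_2)$ and is interior-inessential; pick $\Psi_j\colon \omega_j\to \partial(\T_1\cup\T_2)$ with $\Psi_j|\partial\omega_j = \Phi|\partial\omega_j$. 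Replacing $\Phi|\omega_j$ by $\Psi_j$ on each $\omega_j$ and leaving $\Phi$ untouched elsewhere in $H_i$ yields a map $\Phi''\colon H_i\to \T\setminus\interior(\T_1\cup\T_2)$ with $\Phi''|\partial H_i = \Phi|\partial H_i$. This exhibits a null-homotopy of $\Phi|\partial H_i$ in $\T\setminus\interior(\T_1\cup\T_2)$, and the second assertion of Lemma \ref{lem:link1} upgrades it to a null-homotopy $F_i\colon H_i\to \partial\T_s$. Setting $\hat\Phi|\omega=\Phi|\omega$ and $\hat\Phi|H_i=F_i$ then produces an extension $\hat\Phi\colon (D_\omega,\partial D_\omega)\to (\T_1\cup\T_2,\partial(\T_1\cup\T_2))$ with $\hat\Phi(D_\omega\setminus\omega)\subset \partial(\T_1\cup\T_2)$.

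To verify that $\hat\Phi$ is interior essential, I would argue by contradiction. Suppose there is $F\colon D_\omega\to \partial(\T_1\cup\T_2)$ with $F|\partial D_\omega = \Phi|\partial D_\omega$. Connectedness of $D_\omega$ together with $\Phi(\partial D_\omega)\subset \partial\T_s$ forces $F(D_\omega)\subset \partial\T_s$. Then both $F|\partial H_i$ and $\Phi|\partial H_i$ bound discs in $\partial\T_s$, namely $F|H_i$ and $F_i$, so the two loops are freely homotopic in $\partial\T_s$. On pairwise disjoint collar neighborhoods of the circles $\partial H_i$ inside $\omega$, I would interpolate $F|\omega$ along these homotopies to produce a map $\tilde F\colon \omega\to \partial\T_s\subset \partial(\T_1\cup\T_2)$ with $\tilde F|\partial D_\omega = \Phi|\partial D_\omega$ and $\tilde F|\partial H_i = \Phi|\partial H_i$ for each $i$. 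Such a $\tilde F$ extends $\Phi|\partial\omega$ to $\omega$ inside $\partial(\T_1\cup\T_2)$, contradicting $\omega\in\Omega(\Phi;\T_1\cup\T_2)$.

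The main obstacle I anticipate is extracting a genuinely $\partial\T_s$-valued null-homotopy of $\Phi|\partial H_i$: the ``surgery on inessential components" construction only a priori produces a null-homotopy in the larger space $\T\setminus\interior(\T_1\cup\T_2)$, and one must invoke the second injectivity statement of Lemma \ref{lem:link1} to push it down onto $\partial\T_s$. Once this is in hand, the rest is routine surgery on $2$-dimensional cobordisms together with the observation that the image of the hypothetical inessential filling must, for connectedness reasons, lie in the single boundary component $\partial\T_s$.
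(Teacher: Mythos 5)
Your argument is essentially the paper's: the same surgery on interior-inessential preimage components inside each complementary disc of $\omega$, followed by an application of the second injectivity statement of Lemma \ref{lem:link1} to upgrade the resulting null-homotopy of $\Phi|\partial H_i$ in $\T\setminus\interior(\T_1\cup\T_2)$ to one inside $\partial\T_s$. The only addition is that you explicitly verify that the constructed extension $\hat\Phi$ is interior essential -- a point the paper asserts without elaboration -- and your collar-interpolation contradiction argument for this (using that $D_\omega$ is connected, so a hypothetical inessential filling lands in the single component $\partial\T_s$, and that two null-homotopic loops in a path-connected space are freely homotopic) is correct.
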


\begin{proof}
We may assume that $\omega \subset \Phi^{-1}(\T_1)$. Let $D\subset D_\omega\setminus \omega$ be a component and $E_i=\Phi^{-1}(\T_i) \cap \interior D$ for $i=1,2$. Since $\omega$ is innermost interior essential component in $\Omega(\Phi;\T_1\cup \T_2)$, there exists a map $\Phi'_D \colon D \to \T\setminus \interior (\T_1\cup \T_2)$ satisfying $\Phi'_D|D\setminus (E_1\cup E_2) = \Phi|D\setminus (E_1\cup E_2)$ and $\Phi'_D(E_i)\subset \partial \T_i$ for $i=1,2$. We conclude that $\Phi'_D$ contracts $\partial D$ in $\T\setminus \interior (\T_1\cup \T_2)$. 

Note that $\partial \T_1$ and $\partial \T_2$ are bi-collared in $\T$, that is, for each $i=1,2$, there exists an embedding $b_i\colon \partial \mathbb{T}_i \times [-1,1] \to \mathbb{T}$ such that $b(x,0)=x$. Thus, we conclude that $\Phi'_D|\partial D$ contracts in $\partial \T_1$ by Lemma \ref{lem:link1}. In particular, there exists a map $\Phi_\omega \colon D_\omega \to \T_1$ which extends $\Phi|\omega$ and satisfies $\Phi_\omega(D_\omega\setminus \omega)\subset \partial \T_1$. Thus $\Phi|\omega$ is virtually interior essential.
\end{proof}

The proof of Proposition \ref{prop:vie} is based on the observation that there exists at least two innermost interior essential components in $\Omega(\Phi;\T_1\cup \T_2)$. As a first step, we prove that a virtually interior essential map $(\omega,\partial \omega)\to (\T,\partial \T)$ is homologically non-trivial in $H_2(\T,\partial \T;\Z)$ but intersects $\T_1\cup \T_2$ homologically trivially; cf.\;\cite[Lemma 2.5]{FreedmanM:Strags} and \cite[Lemma 7.2]{HW}. We begin with two general observations on the relative homology of $n$-tubes.

\begin{lem}
\label{lem:homology}
The relative homology group $H_2(\T,\partial \T;\Z)$ is infinite cyclic and generated by the class of the map $G \colon \B^2 \to \T$, $x \mapsto (x,y_0,\dots,y_0)$, where $y_0\in \S^1$.
\end{lem}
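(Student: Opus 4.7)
The plan is to compute $H_2(\T,\partial \T;\Z)$ from the long exact sequence of the pair, using the fact that $\T=\B^2\times (\S^1)^{n-2}$ is a product in which the disk factor contributes contractibility. First I would observe that the inclusion $\{0\}\times (\S^1)^{n-2}\hookrightarrow \T$ is a deformation retract, so by the K\"unneth formula
\[
H_k(\T;\Z)\cong H_k((\S^1)^{n-2};\Z)\cong \Z^{\binom{n-2}{k}}.
\]
Similarly $\partial\T=\S^1\times (\S^1)^{n-2}=(\S^1)^{n-1}$, so $H_k(\partial\T;\Z)\cong \Z^{\binom{n-1}{k}}$. Moreover, the inclusion $i\colon\partial\T\hookrightarrow \T$ is homotopic, via the retraction above, to the projection $(\S^1)^{n-1}\to (\S^1)^{n-2}$ that forgets the first $\S^1$-factor (the one corresponding to $\partial\B^2$).

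Next I would write down the relevant portion of the long exact sequence,
\[
H_2(\partial\T;\Z)\xrightarrow{i_*} H_2(\T;\Z)\longrightarrow H_2(\T,\partial\T;\Z)\xrightarrow{\partial} H_1(\partial\T;\Z)\xrightarrow{i_*} H_1(\T;\Z),
\]
and verify the following two K\"unneth-level claims. Write $[\alpha_0],[\alpha_1],\ldots,[\alpha_{n-2}]$ for the generators of $H_1(\partial\T;\Z)$ corresponding to the $n-1$ circle factors, with $[\alpha_0]=[\partial\B^2\times\{y_0\}^{n-2}]$. First, the map $i_*\colon H_1(\partial\T;\Z)\to H_1(\T;\Z)$ kills $[\alpha_0]$ and sends $[\alpha_1],\ldots,[\alpha_{n-2}]$ bijectively onto a basis of $H_1(\T;\Z)$; hence $\ker i_*=\Z\langle[\alpha_0]\rangle$. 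Second, the map $i_*\colon H_2(\partial\T;\Z)\to H_2(\T;\Z)$ is surjective, since the $\binom{n-2}{2}$ basis classes $[\alpha_j]\times[\alpha_k]$ with $1\le j<k\le n-2$ map to a basis of $H_2(\T;\Z)$.

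Combining these two statements with exactness, the connecting homomorphism $\partial$ is injective with image $\ker i_*=\Z\langle[\alpha_0]\rangle$. Consequently $H_2(\T,\partial\T;\Z)\cong \Z$, and a class $c\in H_2(\T,\partial\T;\Z)$ is a generator if and only if $\partial c=\pm[\alpha_0]$.

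Finally I would check that $[G]$ has this property. The map $G\colon(\B^2,\partial\B^2)\to (\T,\partial\T)$ is by construction a map of pairs, and its class in $H_2(\T,\partial\T;\Z)$ is $G_*[\B^2,\partial\B^2]$. Naturality of $\partial$ gives
\[
\partial[G]=G_*\bigl(\partial[\B^2,\partial\B^2]\bigr)=G_*[\partial\B^2]=[\partial\B^2\times\{y_0\}^{n-2}]=[\alpha_0],
\]
so $[G]$ generates $H_2(\T,\partial\T;\Z)$. There is no real obstacle in this argument: the computation is entirely K\"unneth-and-long-exact-sequence, and the only point requiring a small amount of care is the identification of $i\colon \partial\T\hookrightarrow\T$ with the forgetful projection at the level of homology, which is exactly what makes the first circle factor bound and the others survive.
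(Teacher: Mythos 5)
Your argument is correct, but it takes a different route from the paper's. You compute $H_2(\T,\partial\T;\Z)$ via the long exact sequence of the pair together with K\"unneth, identifying the connecting homomorphism $\partial\colon H_2(\T,\partial\T)\to H_1(\partial\T)$ as an isomorphism onto $\ker\bigl(i_*\colon H_1(\partial\T)\to H_1(\T)\bigr)=\Z\langle[\alpha_0]\rangle$, and then observe $\partial[G]=[\alpha_0]$ by naturality. The paper instead gets the group count in one stroke from duality: after replacing $\partial\T$ by the complement of a smaller core $K$ (a homotopy equivalence of pairs), it applies $H_2(\T,\T\setminus K;\Z)\cong H^{n-2}(K;\Z)\cong\Z$ (Hatcher, Prop.\ 3.46), and then pins down the generator not through $\partial$ but through the projection $\pi\colon\T\to\B^2$: since $\pi(\partial\T)=\partial\B^2$, the induced $\pi_*\colon H_2(\T,\partial\T;\Z)\to H_2(\B^2,\partial\B^2;\Z)$ is defined and $\pi\circ G=\id_{\B^2}$ forces $[G]$ to be a generator. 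Your route is more elementary (no duality, only K\"unneth and the LES) at the cost of bookkeeping the maps $i_*$ in degrees $1$ and $2$; the paper's route is shorter on the group computation and identifies the generator by a one-line projection trick rather than the connecting map. Both are complete; the one small point worth stating explicitly in your version, which you do implicitly, is that surjectivity of $i_*$ in degree $2$ is what kills the left map into $H_2(\T,\partial\T)$ and makes $\partial$ injective.
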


\begin{proof}
Let $K=\B^2(0,1/2)\times (\S^1)^{n-2}$. Since the inclusion $(\T,\partial \T) \to (\T,\T\setminus K)$ is a homotopy equivalence of pairs, we have (see e.g.\;\cite[Proposition 3.46]{HatcherA:Algt}), that
\[
H_2(\T,\partial \T;\Z) \cong H_2(\T,\T-K;\Z) \cong H^{n-2}(K;\Z) \cong H^{n-2}((\S^1)^{n-2};\Z) \cong \Z.
\]

To show that $H_2(\T,\partial \T;\Z) = \langle [G] \rangle$, let $\pi \colon \T\to \B^2$ be the natural projection $(x,y_1,\ldots, y_{n-2})\mapsto x$. Since $\pi(\partial \T) = \pi(\partial \B^2 \times (\S^1)^{n-2}) = \partial \B^2$, the map $\pi_* \colon H_2(\T,\partial \T, \Z)\to H_2(\B^2,\partial \B^2;\Z)$ is well-defined. Since $\pi \circ G = \id_{\B^2}$, we conclude that $\langle [G]\rangle = H_2(\T,\partial \T;\Z)$.
\end{proof}

\begin{lem}
\label{lem:not_0}
Let $T$ be an $n$-tube, $\omega \subset \B^2$ a compact and connected $2$-manifold with boundary, and let $\Phi \colon (\omega,\partial \omega) \to (T,\partial T)$ be a virtually interior essential map. Then $[\Phi]\ne 0$ in $H_2(T,\partial T;\Z)$.
\end{lem}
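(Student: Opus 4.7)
The plan is to reduce the question to a computation with the projection $\pi \colon T \to \B^2$ from Lemma \ref{lem:homology} and then use the product structure $\partial T = \S^1 \times (\S^1)^{n-2}$ to detect non-triviality coordinate by coordinate. First I would replace $\Phi$ by its interior essential extension $\hat \Phi \colon (D_\omega,\partial D_\omega) \to (T,\partial T)$ furnished by virtual interior-essentiality. The relative $2$-chain $\hat \Phi - \Phi$ is supported on $D_\omega \setminus \omega$, on which $\hat \Phi$ takes values in $\partial T$; hence $\hat \Phi - \Phi$ represents the zero class in $H_2(T,\partial T;\Z)$ and $[\Phi] = [\hat\Phi]$. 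It therefore suffices to show $[\hat \Phi] \ne 0$.

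Assume, for contradiction, that $[\hat \Phi] = 0$. Fix a product structure $T = \B^2 \times (\S^1)^{n-2}$ and write $\pi \colon T \to \B^2$ and $p_j \colon T \to \S^1$, for $j = 1, \ldots, n-2$, for the coordinate projections. By Lemma \ref{lem:homology} and its proof, the homomorphism $\pi_* \colon H_2(T,\partial T;\Z) \to H_2(\B^2,\partial \B^2;\Z)$ is an isomorphism, so the relative degree of the map $\pi \circ \hat \Phi \colon (D_\omega,\partial D_\omega) \to (\B^2, \partial \B^2)$ vanishes. Since $D_\omega$ is a $2$-cell, this relative degree agrees with the degree of the boundary restriction $\pi \circ \hat\Phi|_{\partial D_\omega} \colon \partial D_\omega \to \partial \B^2 \cong \S^1$; consequently that boundary map is null-homotopic in $\partial \B^2$.

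For each $j = 1, \ldots, n-2$, the composition $p_j \circ \hat \Phi \colon D_\omega \to \S^1$ is itself an explicit null-homotopy of $p_j \circ \hat \Phi|_{\partial D_\omega}$ in $\S^1$. Combining these $n-2$ null-homotopies with the one above, and using the product identification $\partial T = \partial \B^2 \times (\S^1)^{n-2}$, we conclude that $\hat \Phi|_{\partial D_\omega} \colon \partial D_\omega \to \partial T$ is null-homotopic, and hence extends to a map $D_\omega \to \partial T$. This contradicts the interior essentiality of $\hat \Phi$ and completes the proof. The only delicate point is the chain-level identity $[\Phi] = [\hat\Phi]$ and the coordinate-wise assembly of the null-homotopy in $\partial T$; neither of these is a real obstacle once the product decomposition is in place.
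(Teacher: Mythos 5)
Your proof is correct, and it takes a noticeably different route from the paper's. The paper gives a direct computation: it uses interior essentiality to identify $[\hat\Phi|\partial D_\omega]$ as a nonzero power $[g]^m$ of the generator $g$ of $\ker(\pi_1(\partial\T)\to\pi_1(\T))$, then lifts $\hat\Phi$ and the comparison map $G_m$ to the universal cover $\B^2\times\R^{n-2}$ to conclude $[\hat\Phi]=m[G]$ with $m\ne 0$. You instead argue by contradiction, factor by factor: assuming $[\hat\Phi]=0$, you push forward along the projection $\pi\colon T\to\B^2$ (which is a homological isomorphism on $H_2$ by Lemma~\ref{lem:homology}) to force the $\B^2$-coordinate of $\hat\Phi|\partial D_\omega$ to be null-homotopic, observe that each $\S^1$-coordinate is automatically null-homotopic because it extends over the disk $D_\omega$, and assemble these into a null-homotopy of $\hat\Phi|\partial D_\omega$ in $\partial T$, contradicting interior essentiality. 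Your argument is somewhat lighter —- it avoids the universal cover and the homotopy normalization of $\hat\Phi$ on $\partial D_\omega$ —- while the paper's computation extracts more, namely the precise identity $[\hat\Phi]=m[G]$ rather than just nonvanishing. One minor imprecision worth tidying: the phrase ``the relative $2$-chain $\hat\Phi-\Phi$'' is informal since the two maps have different domains; the clean version is to decompose a fundamental chain of $(D_\omega,\partial D_\omega)$ into the part over $\omega$ and the part over $\mathrm{cl}(D_\omega\setminus\omega)$, noting the latter is pushed into $\partial T$ and hence contributes zero to the relative class. But the intent is clear and the conclusion $[\Phi]=[\hat\Phi]$ is correct.
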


\begin{proof}
Since $T$ is homeomorphic to $\T$, it is enough if we show the lemma for $\T$ only. Let $\hat \Phi \colon D_\omega \to \T$ be an extension of $\Phi$ for which $\hat\Phi(D_\omega \setminus \omega)\subset \partial \T$. We may assume, by extending $\hat \Phi$ further if necessary, that $D_\omega = \B^2$.

Then $[\Phi]=[\hat \Phi]$ in $H_2(\T,\partial \T;\Z)$. Let $\iota \colon \partial \T\to \T$ be the inclusion. Then $[\hat \Phi|\partial D_\omega]\in \ker(\iota_* \colon \pi_1(\partial \T)\to \pi_1(\T))$, since $\hat \Phi$ is interior essential; here we tacitly identify $\partial D_\omega$  with $\S^1$. Thus $[\hat \Phi|\partial D_\omega]=[g]^m$ for $m\ne 0$, where $g\colon \S^1 \to \partial \T$, $x\mapsto (x,y_0,\ldots,y_0)$, and $y_0\in\S^1$. Thus we may assume that $\hat \Phi(z) = (z^m,y_0,\ldots, y_0)=g(z^m)$ for $z\in \S^1\subset \mathbb C$.

Let $G\colon \B^2 \to \T$ be as in Lemma \ref{lem:homology}. We claim that $[\hat \Phi] = m[G]$. Indeed, let $G_m \colon \B^2\to \T$ be the map $z\mapsto G(z^m)$ and $\pi \colon \B^2\times \R^{n-2}\to \T$ the universal cover of $\T$. Let also $\tilde \Phi \colon \B^2\to \B^2\times \R^{n-2}$ and $\tilde G_m\colon \B^2\to \B^2\times \R^{n-2}$ be lifts of $\hat{\Phi}$ and $G_m$, respectively, in $\pi$ so that $\tilde \Phi(z)=\tilde G_m(z)$ for every $z\in \S^1$.
 
Since $\B^2\times \R^{n-2}$ is contractible and $\tilde \Phi - \tilde G_m$ is a $2$-cycle, there exists a $3$-chain $\tilde\sigma$ in $\B^2\times \R^{n-2}$ for which $\tilde \Phi - \tilde G_m = \partial \tilde \sigma$. Thus $\hat{\Phi} - \pi\circ \tilde G_m = \partial \pi_\# \tilde \sigma$. Since $[\pi \circ \tilde G_m] = m[G]$, the claim follows.
\end{proof}

\begin{lem}
\label{lem:homology0}
Let $\omega \subset \B^2$ be a compact and connected $2$-manifold with boundary and let $\Phi \colon (\omega,\partial \omega) \to (\T,\partial \T)$ be a virtually interior-essential map meeting $\T_1\cup \T_2$ transversely. Then $[\Phi|\Phi^{-1}(\T_i)] = 0$ in $H_2(\T_i,\partial \T_i;\Z)$ for $i=1,2$.
\end{lem}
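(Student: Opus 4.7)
The plan is to reduce the vanishing of $[\Phi|\omega_i]$ to a single intersection-number computation for the meridian disk $G$, using the classical fact that each component of the Bing double is individually unknotted in $\B^2\times \S^1$. Following the proof of Lemma \ref{lem:not_0}, I extend $\Phi$ to a map $\hat\Phi\colon (\B^2,\partial\B^2)\to (\T,\partial\T)$ with $\hat\Phi(\B^2\setminus \omega)\subset \partial\T$, and (after a small perturbation off $\omega$) assume that $\hat\Phi$ is transverse to $\T_i$. Since $\T_i\subset \interior\T$, the extension does not hit $\T_i$ outside $\omega$, so $\hat\Phi^{-1}(\T_i)=\omega_i$ and $[\hat\Phi|\omega_i]=[\Phi|\omega_i]$ in $H_2(\T_i,\partial\T_i;\Z)$. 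By Lemma \ref{lem:homology}, $H_2(\T,\partial\T;\Z)$ is generated by $[G]$, where $G(x)=(x,y_0,\ldots,y_0)$; hence $[\hat\Phi]=m[G]$ for some $m\in \Z$.

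Next I define $\rho_i\colon H_2(\T,\partial\T;\Z)\to H_2(\T_i,\partial\T_i;\Z)$ as the composition of the inclusion-of-pairs map $H_2(\T,\partial\T;\Z)\to H_2(\T,\T\setminus \interior\T_i;\Z)$ with the inverse of the excision isomorphism $H_2(\T_i,\partial\T_i;\Z)\cong H_2(\T,\T\setminus\interior\T_i;\Z)$. A standard transversality argument identifies $\rho_i([\hat\Phi])=[\hat\Phi|\omega_i]$, so showing $[\Phi|\omega_i]=0$ reduces to showing $\rho_i([G])=0$. For this, use that $\T_i=\varphi_i(\B^2\times \S^1)\times (\S^1)^{n-3}$ by the definition of $\tilde\varphi_i$. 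For a generic $y_0\in \S^1$, the map $G$ meets $\T_i$ transversely, and $G^{-1}(\T_i)\subset \B^2$ is a finite disjoint union of $2$-disks, one for each component of the meridian intersection $(\B^2\times \{y_0\})\cap \varphi_i(\B^2\times \S^1)$ in $\B^2\times \S^1$. Under the meridian projection of $\T_i$---which identifies $H_2(\T_i,\partial\T_i;\Z)$ with $\Z$ via Lemma \ref{lem:homology}---the class $\rho_i([G])$ equals the algebraic signed count of these disks, which in turn equals the algebraic intersection number of the core of $\varphi_i(\B^2\times \S^1)$ with the meridian disk $\B^2\times \{y_0\}$. Since each component of the Bing double is individually unknotted in the ambient solid torus $\B^2\times \S^1$---its core bounds an embedded disk there---this intersection number vanishes, so $\rho_i([G])=0$ and $[\Phi|\omega_i]=0$.

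The most delicate point is the compatibility of $\rho_i$ with transverse restriction, namely the identity $\rho_i([\hat\Phi])=[\hat\Phi|\omega_i]$. This is a standard consequence of transversality combined with excision, but requires some care because $\omega_i$ is generally disconnected; it is handled by interpreting $\hat\Phi_*[\B^2,\partial\B^2]$ as a relative $2$-cycle in $(\T,\partial\T)$, excising to obtain a relative cycle in $(\T_i,\partial\T_i)$, and verifying that this cycle agrees with $(\hat\Phi|\omega_i)_*[\omega_i,\partial\omega_i]$. Once this identification is secured, the remainder is a direct unpacking of the Bing--Blankinship package together with the (classical) null-homology of each Bing-double component in $\B^2\times \S^1$.
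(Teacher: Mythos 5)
Your proof is correct and follows essentially the same strategy as the paper's: extend $\Phi$ to a disk $\hat\Phi$, use $[\hat\Phi]=m[G]$, and kill the contribution of the generator using the fact that the core of each Bing ring has winding number zero in $\B^2\times\S^1$. The only packaging difference is that you formalize the restriction as the homomorphism $\rho_i$ (inclusion followed by inverse excision) and evaluate $\rho_i([G])$ directly as an intersection number, whereas the paper constructs an adapted meridian disk $g$ meeting only $\T_1$ in two oppositely signed disks and restricts the chain identity $\hat\Phi - mG + \tau = \partial\sigma$; both implementations hinge on the same null-homology of the Bing double cores, though your appeal to each core ``bounding an embedded disk'' is slight overkill (null-homology, i.e.\ winding number zero, is what is used).
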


\begin{proof}
Since
\[
H_2(\T_1\cup \T_2, \partial (\T_1\cup \T_2);\Z) = H_2(\T_1,\partial \T_1;\Z) \oplus H_2(\T_2,\partial \T_2;\Z)
\]
it suffices to show that $[\Phi|\Phi^{-1}(\T_1\cup \T_2)] = 0$ in $H_2(\T_1\cup \T_2, \partial (\T_1\cup \T_2);\Z)$.

Denote $T=\B^2\times \S^1$ and let $(T;\varphi_1,\varphi_2)$ be the initial package for the Bing double which is the base of the initial package $\fI_{B,n}$, that is, satisfying $\tilde \varphi_i = (\varphi_i\times \id)\circ \psi$ for $i=1,2$. Let also $T_i = \varphi_i(T)$ for $i=1,2$, as before.

Let $g\colon (\B^2,\partial \B^2)\to (T,\partial T)$ be a map having the following properties:
\begin{enumerate}
\item $g(\B^2)\cap T_2 = \emptyset$; 
\item $g$ meets $T_1$ transversely;
\item $g^{-1}(T_1)$ consists of exactly two $2$-cells $D_1$ and $D_2$; 
\item $\langle [g] \rangle = H_2(T,\partial T;\Z)$; 
\item $\langle [g|D_1]\rangle = H_2(T_1,\partial T_1;\Z)$; and 
\item $[g|D_1]=-[g|D_2]$ in $H_2(T_1\cup T_2, \partial (T_1\cup T_2);\Z)$.
\end{enumerate}
Let $y_0\in \S^1$ and define $G\colon (\B^2,\partial \B^2) \to (\T,\partial \T)$ to be the map
\[
x\mapsto (g(x),y_0,\dots,y_0).
\]
Then $G$ satisfies the properties (1)-(6) with $T_1$, $T_2$, and $T$ replaced by $\T_1$, $\T_2$, and $\T$, respectively. In particular,
\[ 
[G|D_1] + [G|D_2] = 0
\]
in $H_2(\T_1 \cup \T_2,\partial (\T_1\cup \T_2);\Z)$.

Let $\hat \Phi \colon D_\omega \to \T$ be an extension of $\Phi$ satisfying $\hat \Phi(D_\omega \setminus \omega) \subset \partial \T$. Since $[\hat \Phi]\ne 0$ in $H_2(\T,\partial \T;\Z)$, there exists $m\ne 0$ for which $[\hat \Phi] = m [G]$ by Lemma \ref{lem:homology}. Thus
\begin{equation}
\label{eq:1}
\Phi -m G + \tau = \partial \sigma
\end{equation}
as $2$-chains, where $\sigma$ is a $3$-chain in $\T$ and $\tau$ is a $2$-chain $\partial \T$. In particular,
\[
\Phi|\Phi^{-1}(\T_1\cup \T_2) - mG|(D_1\cup D_2) + \tau' = 0,
\]
where $\tau'$ is a $2$-chain in $\T\setminus \interior(\T_1\cup \T_2)$. Thus
\[
[\Phi|\Phi^{-1}(\T_1\cup \T_2)] = m[G|(D_1\cup D_2)] = m[G|D_1] + m[G|D_2] = 0
\]
in $H_2(\T_1\cup \T_2, \partial (\T_1\cup \T_2); \Z)$. 
\end{proof}

Finally, before the proof of Proposition \ref{prop:vie}, we note that, for a virtually interior essential map $\Phi \colon (\B^2,\partial B^2)\to (\T,\partial \T)$, the elements in $\Omega(\Phi;\T_1\cup \T_2)$ are not annuli. 

\begin{proof}[Proof of Proposition \ref{prop:vie}]
By Lemma \ref{lem:ie} it suffices to show that $\Omega(\Phi;\T_1\cup \T_2)$ has two innermost components. By Corollary \ref{cor:ie}, $\Omega(\Phi;\T_1\cup \T_2)\ne \emptyset$ and there exists at least one innermost component $\omega_1$. 

Suppose $\omega_1$ is the only innermost component in $\Omega(\Phi;\T_1\cup \T_2)$. We may assume that $\Phi(\omega_1) \subset \T_1$. We show that then there exists a map $\Phi' \colon D_\omega \to \T$ for which $\Phi'|\partial D_\omega = \Phi|\partial D_\omega$ and $\Omega(\Phi';\T_1\cup \T_2)=\{D\}$, where $D=(\Phi')^{-1}(\T_1\cup \T_2)$ is a disk. This is a contradiction. Indeed, on one hand, by Lemma \ref{lem:not_0}, $[\Phi'|D]\ne 0$ either in $H_2(\T_1,\partial \T_1;\Z)$ or in $H_2(\T_2,\partial \T_2;\Z)$. Hence $[\Phi'|D]\ne 0$ in $H_2(\T_1\cup\T_2,\partial (\T_1\cup\T_2);\Z)$. On the other hand, $[\Phi'|D] = [\Phi'|(\Phi')^{-1}(\T_1\cup \T_2)] = 0$ in $H_2(\T_1\cup \T_2,\partial (\T_1\cup \T_2);\Z)$ by Lemma \ref{lem:homology0}.

Let $\bar \Phi \colon (D_\omega,\partial D_\omega) \to (\T,\partial \T)$ be an interior essential map, satisfying $\bar \Phi(D_\omega \setminus \omega) \subset \partial \T$, which is an extension of $\Phi$. Note that, we may assume that $\Phi$, and hence also $\bar \Phi$, meets $\T_1\cup \T_2$ transversally.

Since $\omega_1$ is the unique innermost component, there is an enumeration $\omega_1,\ldots, \omega_k$, satisfying $D_{\omega_j} \subset \interior D_{\omega_{j+1}}$ for each $j=1,\ldots, k-1$, for the components in $\Omega(\Phi_1;\T_1\cup \T_2)$. 

Since $\Phi|\omega_1$ is virtually interior essential by Lemma \ref{lem:ie}, we may assume that $\omega_1$ is a disk, that is, $\omega_1=D_{\omega_1}$. By adapting the argument of Lemma \ref{lem:ie} we may also assume that each $\omega_j$ for $j=2,\ldots, k$ is an annulus. Then $A_j = \mathrm{cl}(D_{\omega_j}\setminus (\omega_j \cup D_{\omega_{j-1}}))$ is an annulus with boundary components $C_j^+ = A_j \cap \omega_j$ and $C_j^-=A_j \cap \omega_{j-1}$ for each $j=2,\ldots, k$. 

We show that, for each $j=2,\ldots, k$, the homomorphism $\pi_1(C^+_j) \to \pi_1(\T_1)$ induced by the inclusion is trivial; note that $C^+_j\subset \partial \T_1$. Thus, for each $j=2,\ldots, k$, there exists interior essential maps $\Phi_j \colon (\B^2,\partial \B^2) \to (\T,\partial \T)$ for which $\Omega(\Phi_j;\T_1\cup \T_2) $ consists of annuli $\omega_{j+1},\ldots, \omega_k$ and the disk $D_{\omega_j}$. In particular, $\Omega(\Phi_k;\T_1\cup \T_2)$ is a disk and we may take $\Phi' = \Phi_k$.

Let $t=\B^2\times \S^1$ and we may assume that there exists $3$-tubes $t_1$ and $t_2$ for which $\T_i = t_i \times (\S^1)^{n-3}$. Thus we may further assume that $\Phi(A_k\cup D_{\omega_{k-1}}) \subset t \times \{x_0\}^{n-3}$, where $x_0\in \S^1$, and we may consider $\Phi|(A_k\cup D_{\omega_{k-1}})$ a map into $t$. Indeed, since $\T_1 \cup \T_2 = (t_1\cup t_2) \times (\S^1)^{n-3}$, it suffices to homotope a lift of $\Phi|(A_k\cup D_{\omega_{k-1}})$ to the cover $(\B^2\times \S^1)\times \R^{n-3}$ to obtain a homotopy of $\Phi|(A_k\cup D_{\omega_{k-1}})$ which ends to a map with the required property.

It suffices to construct $\Phi_2$; the other maps are obtained inductively. Since $A_2\cup D_{\omega_1}$ is a disk, the curve $\Phi|C^+_2$ contracts in $t\setminus \interior (t_2)$. By fixing a homeomorphism, $\S^1\to \partial C^+_2$, we may consider $\Phi|C^+_2$ as a loop. We show first that $\Phi(C^+_2)\subset \partial t_1$. 

Suppose $\Phi(C^+_2)\subset \partial t_2$ and consider a lift $\tilde \Phi$ of $\Phi|C^+_2$ in the universal cover $\pi \colon \B^2\times \R \to t$. We may label, the components $t_{2,k}$ ($k\in \Z$) of $\pi^{-1}t_2$ so that they form a chain with components $t_{1,k}$ of $\pi^{-1}t_1$, that is, $t_{2,k}$ is linked with both $t_{1,k}$ and $t_{1,k+1}$ for each $k\in \Z$; note that with this labelling, homomorphisms $\pi_1(\partial t_{2,k}) \to \pi_1((\B^2\times \R) \setminus \interior( t_{1,j}\cup t_{2,k}))$, for $j=k,k+1$, induced by inclusion, are monomorphisms. Suppose there exists $k_0\in \Z$ for which $\tilde\Phi(C^+_2)\subset t_{2,k_0}$ and let $k_1\in \Z$ be such that $\tilde \Phi(D_{\omega_1}) \subset t_{1,k_1}$; note that $|k_1-k_0|\le 1$. This is a contradiction, since $\tilde \Phi|C^+_2$ is contractible in $\B^2\times \R\setminus \interior(t_{1,k_2}\cup t_{2,k_0})$ for $k_2\in \{k_0,k_0+1\}\setminus \{k_1\}$. Thus $\Phi(C^+_2)\subset \partial t_1$.

By considering $\Phi|C^+_2$ as a loop in $\partial t_1$, we have $[\Phi|C^+_2]=[\alpha]^m[\beta]$ in $\pi_1(\partial t_1)$, where $([\alpha],[\beta])$ is a (standard) basis of $\pi_1(\partial t_1)$, that is, $\alpha$ contracts in $t_1$ and $\beta$ in $t\setminus \interior (t_1)$. Using again the fact that $\Phi|C^+_2$ contracts in $t\setminus t_2$, we conclude that $\ell =0$, that is, $[\Phi|C^+_2] = [\alpha]^m$. In particular, there exists a map $D_{\omega_2}\to t_1$ which extends $\Phi|\omega_2$. This concludes the construction of $\Phi_2$ and the proof.
\end{proof}

\section{Modulus estimates}
\label{sec:modulus}

In this section we show a lower bound for moduli of certain families of $(n-2)$-tori in the Semmes space $(\S^n,d)$ and an upper bound for the corresponding families in the Euclidean sphere $\S^n$. For the statement, we introduce some terminology.

Let $T$ be an $n$-tube in $\S^n$. An $(n-2)$-torus $t\subset T$ in $T$ is a \emph{core torus of $T$} if there exists a homeomorphism $\theta \colon \B^2 \times (\S^1)^{n-2}\to T$ for which $t = \theta(\{0\}\times (\S^1)^{n-2})$. 

Let $\fI_{B,n}$ be an initial package for the Bing-Blankinship necklace and $(\T_w)_{w\in \cW_2}$ the associated defining tree. For each $w\in \cW_2$, we denote by $\cS_w$ the family of all core tori $t$ in $\T_w$ for which $t\subset \T_w\setminus (\T_{w1}\cup \T_{w2})$.

\subsection{Modulus lower bound in the Semmes space}
 
Let $\vartheta \colon \T_\emptyset\to \S^n$ be a smooth embedding and let $\hat \rho \colon \S^n\to \S^n$ be the Bing--Blankinship shrinking map in Proposition \ref{prop:BB_Bing}. Let $(T_w)_{w\in \cW_2}$ be the ordered tree with $T_w = \hat\rho(\vartheta(\T_w))$ for $w\in \cW_2$ and $\sS_w = \{\hat \rho(t)  \colon t\in \cS_w\}$ for each $w\in \cW_2$. Let $d$ be a Semmes metric as in Corollary \ref{cor:metric_E} with the scaling constant $\lambda \in (0,1)$. 

We summarize in the following lemma the basic properties of the metric space $(\S^n,d)$ and the families $(\sS_w)_{w\in \cW_2}$ which will be used in the forthcoming discussion. These properties are direct consequences of the shrinking map $\hat \rho$ and the construction of the metric $d$; see Section \ref{sec:BB} and the references therein. 

For brevity, we call $(\S^n,\fI_{B,n},\rho,\lambda,d)$ the \emph{data of the Semmes space $(\S^n,d)$}, and $(\S^n,\fI_{B,n},\rho,\lambda,d; (T_w)_w,(\sS_w)_w)$ an \emph{extended data}; note that $(T_w)_w$ and $(\sS_w)_w$ are fully determined by the other data.

\begin{lem}
\label{lemma:basic_properties}
Let $(\S^n,\fI_{B,n},\rho,\lambda,d;(T_w)_w,(\sS_w)_w)$ be an extended data. Then
\begin{itemize}
\item[(i)] for each $w\in \cW_2$, the family $\sS_w$ consists of $(n-2)$-tori,
\item[(ii)] $\lim_{k\to \infty} \sup_{|w|\ge k}\diam_d{T_w} = 0$, and
\item[(iii)] there exists $L\ge 1$ and $\delta>0$ depending only on the data so that, for each $w\in \cW_2$, $B_d(\partial T_w,\delta\lambda^{|w|+1})\cap T_w$ is (smoothly) $(L,\lambda^{|w|})$-quasisimilar to $B_d(\partial T_\emptyset,\delta)$.
\end{itemize}
\end{lem}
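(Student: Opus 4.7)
The plan is to derive each property directly from the Semmes scaling encoded in the construction of $d$ (Corollary \ref{cor:metric_E}) together with the fact, from Proposition \ref{prop:BB_Bing}, that $\hat\rho\circ\vartheta$ restricts to a diffeomorphism on the complement of the singular set $\cS(\fI_{B,n})$.

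Properties (i) and (ii) are essentially bookkeeping. For (i), each $t\in \cS_w$ is by definition a smoothly embedded $(n-2)$-torus contained in $\T_w\setminus (\T_{w1}\cup \T_{w2})$, hence disjoint from $\cS(\fI_{B,n})$; the map $\hat\rho\circ\vartheta$ restricts to a smooth embedding on a neighborhood of $t$, so $(\hat\rho\circ\vartheta)(t)$ is a smoothly embedded $(n-2)$-torus in $T_w$. For (ii), the pullback of $d$ to $\T/G_{\fI_{B,n}}$ via $\rho'\circ\vartheta'$ is a Semmes metric $d'$ with scaling constant $\lambda$, as verified in the proof of Corollary \ref{cor:metric_E}. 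Hence there exists $L\ge 1$ with
\[
d'(\tilde\varphi_w(x),\tilde\varphi_w(y))\le L\lambda^{|w|}\,d'(x,y)
\]
for all $x,y$ and $w\in \cW_2$, which yields $\diam_d T_w \le L\lambda^{|w|}\diam_d T_\emptyset$ and hence (ii) since $\lambda<1$.

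For (iii), I would introduce the transport map $\Psi_w := (\hat\rho\circ\vartheta)\circ\tilde\varphi_w\circ(\hat\rho\circ\vartheta)^{-1}$, which is a smooth diffeomorphism of $T_\emptyset\setminus\cS(\fI_{B,n})$ onto $T_w\setminus\cS(\fI_{B,n})$ carrying $\partial T_\emptyset$ to $\partial T_w$ and satisfying, by the Semmes scaling,
\[
\frac{\lambda^{|w|}}{L}\,d(x,y) \le d(\Psi_w(x),\Psi_w(y)) \le L\lambda^{|w|}\,d(x,y)
\]
for all $x,y$ in its domain. Since $T_1\cup T_2$ is compact and disjoint from $\partial T_\emptyset$, choose $\delta_0>0$ so that $B_d(\partial T_\emptyset,\delta_0)\cap T_\emptyset$ lies in $T_\emptyset\setminus(T_1\cup T_2)$, set $\delta = \delta_0/L$, and enlarge $L$ if necessary so that $L\lambda\le 1$. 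The distortion bounds then show that $\Psi_w$ restricts to a smooth $(L,\lambda^{|w|})$-quasisimilar embedding of $B_d(\partial T_\emptyset,\delta)\cap T_\emptyset$ whose image contains $B_d(\partial T_w,\delta\lambda^{|w|+1})\cap T_w$, and the required quasisimilarity follows by further restriction. The main technical step is (iii), in particular the uniform choice of $\delta$ and $L$ so that the quasisimilar correspondence between the prescribed annular neighborhoods of $\partial T_w$ and $\partial T_\emptyset$ holds simultaneously for every $w\in \cW_2$; once the first-step estimates at $w=\emptyset$ are in hand, the self-similarity of $\Psi_w$ propagates them to all levels.
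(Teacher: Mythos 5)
The paper itself does not give an explicit proof of this lemma; it merely asserts that the three properties are ``direct consequences of the shrinking map $\hat\rho$ and the construction of the metric $d$'' and refers back to Section~3. What you have done is reconstruct that implicit argument, and your reconstruction matches the paper's intent: (i) from the fact that each $t\in\cS_w$ avoids the singular set so $\hat\rho\circ\vartheta$ is a smooth embedding near $t$, (ii) from the Semmes-metric scaling bound $\diam_d T_w\le L\lambda^{|w|}\diam_d T_\emptyset$, and (iii) from transporting a fixed collar of $\partial T_\emptyset$ by the quasisimilar conjugates $\Psi_w$.

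One small slip in (iii): you write ``set $\delta=\delta_0/L$, and enlarge $L$ if necessary so that $L\lambda\le 1$.'' That adjustment goes the wrong way: enlarging $L$ only increases $L\lambda$. The constant $L$ is fixed by the Semmes metric, and there is no a priori reason it satisfies $L\lambda\le 1$. The correct way to absorb a possibly large $L$ is not to modify $L$ but to use the extra power of $\lambda$ already present in the statement (or to shrink the collar multiplier to something like $\lambda^{|w|+1}/L$), observing that $\Psi_w^{-1}$ has $d$-Lipschitz constant at most $L\lambda^{-|w|}$, so that $\Psi_w^{-1}\bigl(B_d(\partial T_w,\delta\lambda^{|w|+1})\cap T_w\bigr)\subset B_d(\partial T_\emptyset,L\lambda\,\delta)\cap T_\emptyset$; one then takes $\delta\le\delta_0/(L^2\lambda)$, say, so this lands inside the $\delta_0$-collar where $\Psi_w$ is defined and $L$-quasisimilar at scale $\lambda^{|w|}$. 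Aside from this bookkeeping, your argument is correct, and in particular the crucial facts you isolate --- the quasisimilarity of $\Psi_w$ on $T_\emptyset\setminus(T_1\cup T_2)$ and that a uniform collar of $\partial T_\emptyset$ lies in that annulus --- are exactly what the paper relies on downstream in Proposition~\ref{prop:Semmes_modulus}.
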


Note that, here and in what follows, we indicate the use of the Semmes metric $d$ by a subscript in the metric notions such as diameter and neighborhood.

The modulus lower bound for families $\sS_w$ in the Semmes space $(\S^n,d)$ is a direct corollary of (iii) in Lemma \ref{lemma:basic_properties}.

\begin{prop}
\label{prop:Semmes_modulus}
Let $(\S^n,\fI_{B,n},\rho,\lambda,d; (T_w)_w, (\sS_w)_w)$ be an extended data. Then there exists $c_0>0$ depending only the data so that 
\[
\Mod_{\frac{n}{n-2},d}(\sS_w) \ge c_0
\] 
for each $w\in \cW_2$.
\end{prop}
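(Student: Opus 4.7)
The plan is to reduce the general case to the base case $w=\emptyset$ using the quasi-self-similarity provided by Lemma \ref{lemma:basic_properties}(iii), and then to estimate the base modulus directly by working in the smooth collar of $\partial T_\emptyset$. The key conceptual observation is that the exponent $n/(n-2)$ is precisely the exponent for which the modulus of a family of $(n-2)$-manifolds is scale-invariant, so the factor $\lambda^{|w|}$ in the quasisimilarity contributes nothing and only the bilipschitz constant $L$ matters.

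First, fix $w\in \cW_2$ and let $f_w \colon B_d(\partial T_\emptyset,\delta) \to B_d(\partial T_w,\delta\lambda^{|w|+1})\cap T_w$ be the smooth $(L,\lambda^{|w|})$-quasisimilarity of Lemma \ref{lemma:basic_properties}(iii). Since $B_d(\partial T_\emptyset,\delta)$ is disjoint from the singular set, I would fix a smooth parameterization $\theta_0\colon \B^2\times(\S^1)^{n-2}\to T_\emptyset$ and choose an annulus $A\subset \B^2$ sufficiently close to $\partial \B^2$ so that $\theta_0(A\times(\S^1)^{n-2})\subset B_d(\partial T_\emptyset,\delta/2)$. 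Then $\sS'_\emptyset=\{\theta_0(\{x\}\times(\S^1)^{n-2}):x\in A\}$ is a subfamily of $\sS_\emptyset$ lying in the collar. The partial parameterization $f_w\circ \theta_0$ extends to a global parameterization $\theta_w\colon \B^2\times(\S^1)^{n-2}\to T_w$ because the complement of a topological collar in $T_w$ is itself a tube and any boundary homeomorphism extends inward; hence $\sS'_w=f_w(\sS'_\emptyset)=\{\theta_w(\{x\}\times(\S^1)^{n-2}):x\in A\}$ is a subfamily of $\sS_w$.

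Second, I would record the scale invariance: under a pure homothety of ratio $\lambda^{|w|}$ the measure $\haus^{n-2}$ on any $(n-2)$-manifold is multiplied by $\lambda^{|w|(n-2)}$ and $\haus^n$ is multiplied by $\lambda^{|w|n}$, so rescaling an admissible function by $\lambda^{-|w|(n-2)}$ preserves admissibility and leaves the $n/(n-2)$-energy unchanged. Combining this with the bilipschitz part, $f_w$ distorts the $n/(n-2)$-modulus of $(n-2)$-manifold families by at most a factor $C(L,n)$. Monotonicity of modulus then gives
\[
\Mod_{\frac{n}{n-2},d}(\sS_w) \ge \Mod_{\frac{n}{n-2},d}(\sS'_w) \ge C(L,n)^{-1}\,\Mod_{\frac{n}{n-2},d}(\sS'_\emptyset),
\]
so it remains only to bound $\Mod_{n/(n-2),d}(\sS'_\emptyset)$ from below by a positive constant.

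Finally, for the base case, since the collar $B_d(\partial T_\emptyset,\delta)$ lies in the smooth locus $\S^n\setminus E$, the parameterization $\theta_0$ can be chosen bilipschitz onto its image with respect to the product Euclidean metric on $\B^2\times(\S^1)^{n-2}$. The computation thus reduces, up to another bilipschitz factor depending only on the data, to the Euclidean $n/(n-2)$-modulus of the family $\Sigma=\{\{x\}\times(\S^1)^{n-2}:x\in A\}$ in $A\times(\S^1)^{n-2}$, which is bounded below by a strictly positive constant by testing against a constant function supported on $A\times(\S^1)^{n-2}$. The main obstacle in the argument is the topological step of checking that $f_w$ maps the distinguished subfamily $\sS'_\emptyset$ into $\sS_w$, that is, that the images $f_w(t)$ are genuine core tori of $T_w$; this is resolved, as indicated above, by extending the boundary-collar parameterization to a global parameterization of the tube $T_w$.
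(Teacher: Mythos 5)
Your overall strategy is the same as the paper's: use Lemma~\ref{lemma:basic_properties}(iii) to reduce to the collar of $\partial T_\emptyset$, use the bilipschitz structure of the collar to pass to the Euclidean product $A\times(\S^1)^{n-2}$, and then bound the $n/(n-2)$-modulus of the vertical fibers there. Your explicit verification that $f_w$ carries the collar subfamily into genuine core tori of $T_w$ (by extending the collar parameterization inward) is a sensible point of care that the paper leaves implicit, and your observation that the exponent $n/(n-2)$ makes the $\lambda^{|w|}$ factor drop out is exactly the right conceptual reason for the uniformity in $w$.

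However, the final step as you have written it does not establish the claimed lower bound. ``Testing against a constant function'' -- that is, plugging an admissible $\rho\equiv c$ into the energy integral -- only produces an \emph{upper} bound for $\Mod_{n/(n-2)}(\Sigma)$, because the modulus is an infimum over admissible functions. To prove a \emph{lower} bound you must show that \emph{every} admissible $\rho$ has $\int \rho^{n/(n-2)}\,d\haus^n$ bounded below; no single test function can do that. The paper closes this gap with H\"older's inequality on the fibers: for each $z\in A$,
\[
\int_{(\S^1)^{n-2}} \rho(z,y)^{\frac{n}{n-2}}\,d\haus^{n-2}(y)
\ \ge\ \haus^{n-2}\bigl((\S^1)^{n-2}\bigr)^{-\frac{2}{n-2}}\left(\int_{(\S^1)^{n-2}}\rho(z,y)\,d\haus^{n-2}(y)\right)^{\frac{n}{n-2}}
\ \ge\ \haus^{n-2}\bigl((\S^1)^{n-2}\bigr)^{-\frac{2}{n-2}},
\]
and integrating over $z\in A$ gives $\Mod_{n/(n-2)}(\Sigma)\ge \haus^{n-2}((\S^1)^{n-2})^{-2/(n-2)}\haus^2(A)$. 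Replacing your ``testing'' sentence with this Fubini--H\"older argument (which is what actually proves that the constant function is extremal, not merely admissible) makes the proof complete and essentially identical to the paper's.
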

\begin{proof}
Let $\delta>0$ be as in (iii) in Lemma \ref{lemma:basic_properties} and let $\sS_0\subset \sS_\emptyset$ be the subfamily of core tori $t$ contained in the $\delta$-neighborhood $\Omega = B_d(\partial T_\emptyset,\delta)\cap T_\emptyset$ of $\partial T_\emptyset$ in $T_\emptyset$. By by the uniform quasisimilarity, it suffices to show that $\Mod_{\frac{n}{n-2},d}(\sS_0) \ge c_0$, where $c_0>0$ depends only on the data. 

By properties of the metric $d$, there exists $L_0\ge 1$ depending only on the data and an $L_0$-bilipschitz diffeomorphism $f \colon A \times (\mathbb{S}^1)^{n-2} \to \Omega$, where $A=\B^2\setminus \B^2(1-\delta)$. Let $f^{-1}\sS_0 = \{ f^{-1}t \colon t\in \sS_0\}$. Since $f$ is $L_0^{2n}$-quasiconformal, the quasi-invariance of the conformal modulus yields the estimate
\[
\Mod_{\frac{n}{n-2},d}(\sS_0) \ge c_0 \Mod_{\frac{n}{n-2}}(f^{-1}\sS_0), 
\]
where $c_0>0$ depends only on $L_0$ and $n$. Thus it suffices to show that the family $\sS_{A}=\{ \{x\}\times (\S^1)^{n-2} \colon x\in A\}\subset f^{-1}\sS'$ has modulus lower bound.

Let $\rho\colon A\times (\S^1)^{n-2}\to \R$ be an admissible function for $\sS_{A}$. By H\"older's inequality, 
\begin{eqnarray*}
&& \int_{A\times (\S^1)^{n-2}} \rho(x)^{\frac{n}{n-2}} \mathrm{d}\haus^n(x) \\
&&\qquad =
\int_A \left( \int_{(\S^1)^{n-2}} \rho(z,y)^{\frac{n}{n-2}} \mathrm{d}\haus^{n-2}(y)\right) \mathrm{d}\haus^2(z) \\
&&\qquad \ge \haus^{n-2}\left((\S^1)^{n-2}\right)^{\frac{-2}{n-2}} \int_A\left(\int_{(\S^1)^{n-2}} \rho(z,y) \mathrm{d}\haus^{n-2}(y)\right)^{\frac{n}{{n-2}}}\mathrm{d}\haus^2(z)\\
&&\qquad \ge \haus^{n-2}\left((\S^1)^{n-2}\right)^{\frac{-2}{n-2}} \haus^2(A).
\end{eqnarray*}
This concludes the proof 
\end{proof}

\subsection{Modulus upper bound in the Euclidean sphere}

For the modulus upper bound, we pass to a non-smooth setting in the following sense. Let $(\T_w)_{w\in \cW_2}$ be the Bing--Blankinship defining tree associated to the initial package $\fI_{B,n}$ and let $\vartheta'\colon \T_\emptyset \to \S^n$ be an embedding. We may assume that $\vartheta'\T_\emptyset \subset \R^n \subset \S^n$.

Let also $\varrho' \colon \S^n \to \S^n$ be a Bing--Blankinship map as in Proposition \ref{prop:BB_Bing} with the exception that $\varrho'|\S^n\setminus \cS(\fI_{B,n})$ is merely a homeomorphism; note that we do not assume $\vartheta'$ to be smooth. We denote now $T'_w = (\varrho'\circ \vartheta')\T_w$ and $\sS'_w = (\varrho'\circ \vartheta')\cS_w$ for each $w\in \cW_2$. As summarized in Lemma \ref{lemma:basic_properties}, we again have that $\diam{T'_w}\to 0$ as $|w|\to \infty$ and families $\sS_w$ consist of $(n-2)$-tori.

The modulus upper bound now reads as follows. 
\begin{prop}
\label{prop:HW}
Let $\alpha \colon \S^1\to \R^n$, $z\mapsto (z,0,\ldots,0)$. Suppose $|\alpha|$ is in the complement of $T'_\emptyset$ and suppose $\alpha$ is not homotopic to a constant map in $\R^n\setminus T'_\emptyset$. Let $\delta = \mathrm{dist}(|\alpha|,T'_\emptyset)$. Then there exists $C_1>0$ depending only on $n$ so that, for each $k\in \N$, there exists $w_k\in \cW_2$ of length $k$ for which
\[
\Mod_{\frac{n}{n-2}}(\sS'_{w_k}) \le C_1 \left(\frac{\diam{T'_{w_k}}}{\delta}\right)^n. 
\]
\end{prop}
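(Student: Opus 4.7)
The plan is to exhibit a $2$-disk $\Phi$ bounded by $\alpha$ in $\R^n$, use the homological intersection result of Corollary \ref{cor:vie} to locate for each level $k$ a tube $T'_{w_k}$ through which $\Phi$ passes virtually interior essentially, and then build an admissible function for $\sS'_{w_k}$ whose $L^{n/(n-2)}$-norm realizes the claimed bound. This is the natural higher-dimensional adaptation of the Semmes strategy from \cite{SemmesS:Goomsw}, in which the area estimates for $\Phi\cap T'_{w_k}$ coming from Proposition \ref{prop:vie} replace Semmes's length estimates in dimension three.

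First I would take $\Phi \colon \B^2 \to \R^n$ to be the flat disk $z \mapsto (z,0,\ldots,0)$, so that $\Phi|\partial\B^2 = \alpha$. Since each $\partial T'_w$ is the topological boundary of an $n$-tube and hence bi-collared in $\S^n$, and the family $\{T'_w\}_{w\in\cW_2}$ is countable, a standard general-position perturbation together with a diagonalization argument produces a map, still called $\Phi$, close to the flat disk, bounding $\alpha$, and transverse to every $\partial T'_w$. The assumption that $\alpha$ is not null-homotopic in $\R^n\setminus T'_\emptyset$ forces $\Phi^{-1}(T'_\emptyset)$ to contain a virtually interior essential component, and iterated application of Corollary \ref{cor:vie} on the defining tree then gives
\[
\#\,\Omega\bigl(\Phi;\,\bigcup_{|w|=k}T'_w\bigr)\ge 2^k.
\]
Since there are exactly $2^k$ words of length $k$, pigeonhole yields $w_k\in\cW_2$ of length $k$ and a virtually interior essential component $\omega_k\subset\Phi^{-1}(T'_{w_k})$.

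For the modulus bound, set $r=\diam T'_{w_k}$ and pick $y_0\in T'_{w_k}$, so that $T'_{w_k}\subset B(y_0,r)$. Since $\Phi$ is essentially a flat unit disk, the area estimate $\haus^2(\Phi(\omega_k))\le Cr^2$ is immediate. The Poincar\'e--Lefschetz intersection pairing
\[
H_2(T'_{w_k},\partial T'_{w_k};\Z)\times H_{n-2}(T'_{w_k};\Z)\longrightarrow\Z
\]
on the topological $n$-tube $T'_{w_k}$, together with Lemma \ref{lem:not_0}, shows that $[\Phi|\omega_k]\ne 0$ and pairs nontrivially with the class of any core torus $t\in\sS'_{w_k}$, so $\Phi(\omega_k)\cap t\ne\emptyset$ for every $t\in\sS'_{w_k}$. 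I would then define an admissible function of the form
\[
\rho(y)=c\,\min\bigl(\sigma^{-(n-2)},\,\dist(y,\Phi(\omega_k))^{-(n-2)}\bigr)\,\mathbf{1}_{B(y_0,Cr)}(y)
\]
for a truncation scale $\sigma>0$ and a normalizing constant $c$, verify admissibility from the transverse intersection of each core torus with $\Phi(\omega_k)$, and estimate $\int\rho^{n/(n-2)}\,d\haus^n$ by splitting into near-field and far-field contributions. The factor $\delta^{-n}$ enters through the scale-invariance of the $n/(n-2)$-modulus in $\R^n$: normalizing $\R^n$ by $\delta^{-1}$ so that $\dist(|\alpha|,T'_\emptyset)=1$ turns the estimate into $\Mod_{\frac{n}{n-2}}(\sS'_{w_k})\le C_1 r^n$ in rescaled coordinates, which is $C_1(r/\delta)^n$ in the original.

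The main obstacle is calibrating the admissible function so that the near- and far-field contributions combine to give the sharp exponent $n$ in $(\diam T'_{w_k}/\delta)^n$. A direct truncated Newtonian kernel around the $2$-surface $\Phi(\omega_k)$ tends to give only a $(\diam T'_{w_k}/\delta)^2$ bound in the local contribution, so to reach exponent $n$ one likely needs to average over a family of translated linking disks $\Phi_y(z)=(z,y)$ parametrized by $y\in B^{n-2}(0,c\delta)$, each of which links $T'_\emptyset$ by continuity and provides transverse intersections across the full $(n-2)$-dimensional direction of $T'_{w_k}$. Incorporating the product structure of the topological $n$-tube $T'_{w_k}$ through this averaging should produce the desired exponent and complete the proof.
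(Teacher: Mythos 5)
Your final gesture---averaging over the family of translated linking disks $\Phi_y(z)=(z,y)$, $y\in\B^{n-2}(\delta)$---is exactly the right input, but you stop short of turning it into an argument, and the part of the proof you do carry out (the truncated Newtonian kernel around a single $2$-surface $\Phi(\omega_k)$) is a dead end, as you yourself note: it cannot produce the exponent $n$. The gap is that you are trying to build a clever admissible function directly, whereas the estimate comes from a different order of operations.

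The paper inverts the logic. It first proves a \emph{uniform area lower bound} for the competitor surfaces: for each $k$ there is a word $w_k$ of length $k$ with
\[
\inf_{t\in\sS'_{w_k}}\haus^{n-2}(t)\ \gtrsim\ \delta^{n-2}.
\]
This is where the family $\{\Phi_y\}_{y\in\B^{n-2}(\delta)}$ enters: for each $y$, the restriction of $\Phi_y$ to $T'_\emptyset$ is virtually interior essential (because $\Phi_y|\S^1$ is homotopic to $\alpha$ in the complement of $T'_\emptyset$), so Proposition \ref{prop:vie}/Corollary \ref{cor:vie} gives at least $2^k$ essential intersection components with level-$k$ tubes; since each meets the chosen core tori, $\#\bigl((\bigcup_{|w|=k}t_w)\cap(\B^2\times\{y\})\bigr)\ge 2^k$. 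Integrating this intersection count over $y\in\B^{n-2}(\delta)$ with the coarea formula gives $\sum_{|w|=k}\haus^{n-2}(t_w)\ge 2^k\,c_{n-2}\delta^{n-2}$, and pigeonhole over the $2^k$ words of length $k$ (applied to nearly-minimizing competitors $t_w$) yields $w_k$ with the stated lower bound. Your Poincar\'e--Lefschetz pairing observation only gives one intersection point per core torus; without sweeping $y$ over a $\delta$-ball in the $(n-2)$ transversal directions you never obtain the $\delta^{n-2}$ factor. Once the area lower bound is in hand, no kernel is needed: the \emph{constant} admissible function $\rho=(c_{n-2}\delta^{n-2})^{-1}\chi_{T'_{w_k}}$ is admissible for $\sS'_{w_k}$, and
\[
\Mod_{\frac{n}{n-2}}(\sS'_{w_k})\le \int \rho^{\frac{n}{n-2}}\,\mathrm{d}\haus^n \lesssim \frac{(\diam T'_{w_k})^n}{\delta^n}.
\]
So the missing ingredient is not a more elaborate $\rho$ but the coarea integration that converts the homological intersection count into an $(n-2)$-area bound; after that the admissible function is the trivial one.
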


Although the proof is merely a part of the proofs of \cite[Proposition 4.5]{HW} and \cite[Theorem 10.1]{PW}, we recall the argument.

\begin{proof}[Proof of Proposition \ref{prop:HW}]
We show first that, for each $k\in \N$, there exists $w_k\in \cW_2$ of length $k$ for which
\begin{equation}
\label{eq:inf_area}
\inf_{t\in \sS_{w_k}} \haus^{n-2}(t) \ge c_{n-2} \delta^{n-2},
\end{equation}
where $c_{n-2} = \haus^{n-2}(\B^{n-2})$.

Having \eqref{eq:inf_area} at our disposal, the claim follows by the standard modulus estimate. Indeed, the function $\rho = (c_{n-2} \delta^{n-2})^{-1}\chi_{T'_{w_k}}$ is an admissible function for $\sS_{w_k}$ and
\begin{eqnarray*}
\Mod_{\frac{n}{n-2}}(\sS_{w_k}) &\le& \int_{\R^n}\rho^{\frac{n}{n-2}} \, d\haus^{n} = \frac{(\diam{T'_{w_k}})^n}{(c_n\delta^{n-2})^{\frac{n}{n-2}}} = C_1 \left( \frac{\diam{T'_{w_k}}}{\delta}\right)^n.
\end{eqnarray*}

To prove the estimate \eqref{eq:inf_area}, let $k\in \N$ and $\varepsilon>0$. For each $w\in \cW_2$ of length $|w|=k$, we fix $t_w \in \sS'_w$ for which
\[
\haus^{n-2}(t_w) \ge \inf_{t\in \sS'_w} \haus^{n-2}(t) - \varepsilon.
\]

We claim first that 
\begin{equation}
\label{eq:intersections}
\#((\bigcup_{|w|=k}t_w)\cap (\B^2\times \{j\})) \ge 2^k
\end{equation}
for each $j\in \B^{n-2}(\delta)$. Indeed, let $j\in \B^{n-2}(\delta)$ and consider the map $\zeta_j \colon \B^2\to \S^n$, $u \mapsto (u,j)$. Since $\zeta_j|\S^1$ is homotopic to $\alpha$ in $\S^n\setminus T'_\emptyset$, $\zeta_j|\S^1$ is not null-homotopic in $\R^n\setminus T'_\emptyset$. Thus there exists a domain $D_j\subset \B^2$ so that $\Psi_j = \zeta_j|D_j \colon (D_j,\partial D_j) \to (T'_\emptyset,\partial T'_\emptyset)$ is virtually interior essential. 

The count of the intersections $\Psi_j(D_j) \cap \bigcup_{|w|=k} t_w$ reduces to Proposition \ref{prop:vie} as follows. Let $q\colon \S^n\to \S^n/\mathrm{BB}$ be the quotient map and $h'\colon \S^n \to \S^n/\mathrm{BB}$ the homeomorphism satisfying 
\[
\xymatrix{
\S^n \ar[rr]^{\varrho'} \ar[dr]_q & & \S^n \ar[dl]^{h'}_{\approx} \\
& \S^n/\mathrm{BB} & }
\]
It is now easy to find a virtually interior essential map $\Phi_{j,k}\colon (D_j,\partial D_j) \to \vartheta'(\T_\emptyset), \partial \vartheta'(\T_\emptyset))$ for which $(\varrho'\circ \Phi_{j,k})|D_{j,k} = \Psi_j|D_{j,k}$, where $D_{j,k} = \Psi_j^{-1}(\R^n\setminus \bigcup_{|w|=k+1} T'_w)$; we refer to \cite[Lemma 10.2]{PW} for a detailed argument.  By Proposition \ref{prop:vie}, 
\[
\#\Omega(\Phi_{j,k}; \bigcup_{|w|=k} \vartheta'(\T_w)) \ge 2^k.
\]
Since each element in $\Omega(\Phi_{j,k}; \bigcup_{|w|=k} \vartheta'(\T_w))$ meets $(\varrho')^{-1}\bigcup_{|w|=k} t_w$, inequality (\ref{eq:intersections}) follows.

By the co-area formula \cite[Theorem 2.10.25]{Federer}, we have
\begin{eqnarray*}
\sum_{|w|=k} \haus^{n-2}(t_w)) &=& \haus^{n-2}(\bigcup_{|w|=k} t_w) \\
&\ge& \haus^{n-2}((\bigcup_{|w|=k} t_w)\cap (\B^2 \times \B^{n-2}(\d)))\\
&\ge& \int_{\B^{n-2}(\delta)} \# ((\bigcup_{|w|=k} t_w)\cap (\B^2\times\{j\})) \, d\haus^{n-2}(j)\\
&\ge& 2^k \haus^{n-2}(\B^{n-2}(\delta)) = 2^k c_{n-2} \delta^{n-2},
\end{eqnarray*}
where $c_{n-2} = \haus^{n-2}(\B^{n-2})$. Thus \eqref{eq:inf_area} holds.
\end{proof}

\section{Analog of Semmes's theorem in higher dimensions}\label{sec:proof_main}

Before discussing the proof of the main theorem (Theorem \ref{thm:point}), we give a short proof of the following result.

\begin{thm}
\label{thm:main}
For each $n\ge 3$ and $\lambda \in (0,2^{-1/n})$ there exists a Cantor set $E\subset \S^n$ and an almost smooth Ahlfors $n$-regular and LLC Semmes metric $d$ with scaling constant $\lambda$ and singular set $E$ for which there is no quasiconformal homeomorphism $(\S^n,d) \to \S^n$.
\end{thm}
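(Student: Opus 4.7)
The existence claim is immediate from Corollary~\ref{cor:metric_E} together with Lemmas~\ref{lem:Ahlfors} and \ref{lem:LLC}: the condition $\lambda<2^{-1/n}$ is precisely the hypothesis of Lemma~\ref{lem:Ahlfors} with $p=2$, which is the branching number of the Bing--Blankinship package $\fI_{B,n}$. The heart of the theorem is the non-existence of a quasiconformal homeomorphism $f\colon(\S^n,d)\to\S^n$, which I would prove by contradiction. Assume such an $f$ exists. Since $(\S^n,d)$ is an Ahlfors $n$-regular, linearly locally contractible $n$-manifold, it satisfies a $(1,n)$-Poincar\'e inequality and is therefore $n$-Loewner, so by the Heinonen--Koskela theorems recalled in Section~\ref{sec:pre} the map $f$ is in fact quasisymmetric. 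After post-composing with a suitable M\"obius transformation of $\S^n$ (which is conformal and so preserves all the moduli in play), I may further assume that the image tube $f(\hat\rho(\vartheta(\T_\emptyset)))$ is in the standard position required by Proposition~\ref{prop:HW}, namely that the curve $\alpha(z)=(z,0,\ldots,0)$ lies in its complement and is non-trivially linked with it.

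The contradiction will come from comparing two estimates of the $n/(n-2)$-modulus of the families $\sS_w$ of core $(n-2)$-tori. On the Semmes side, Proposition~\ref{prop:Semmes_modulus} gives the uniform lower bound $\Mod_{n/(n-2),d}(\sS_w)\ge c_0$ independent of $w$; inspection of the proof shows the same bound holds for the subfamily $\sS_{w,\delta}\subset\sS_w$ of those tori lying in the collar $B_d(\partial T_w,\delta\lambda^{|w|+1})\cap T_w$ supplied by Lemma~\ref{lemma:basic_properties}(iii). On the Euclidean side, I would apply Proposition~\ref{prop:HW} with $\varrho'=f\circ\hat\rho$ and $\vartheta'=\vartheta$ to produce, for each $k\in\N$, a word $w_k$ of length $k$ with
\[
\Mod_{\frac{n}{n-2}}(f\sS_{w_k})\le C_1\Bigl(\frac{\diam f(T_{w_k})}{\delta'}\Bigr)^n,
\]
where $\delta'>0$ is the distance from $|\alpha|$ to $f(T_\emptyset)$. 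By Lemma~\ref{lemma:basic_properties}(ii) one has $\sup_{|w|\ge k}\diam_d T_w\to 0$, and since $f$ is a homeomorphism between compact spaces it is uniformly continuous, so $\diam f(T_{w_k})\to 0$; hence $\Mod_{\frac{n}{n-2}}(f\sS_{w_k})\to 0$.

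The remaining step, and the one I expect to be most delicate, is to bridge these two bounds via the conformal quasi-invariance \eqref{eq:Sigma_Mod}. Here Lemma~\ref{lemma:basic_properties}(iii) is decisive: each collar $B_d(\partial T_{w_k},\delta\lambda^{|w_k|+1})\cap T_{w_k}$ is $(L,\lambda^{|w_k|})$-quasi-similar to the fixed smooth collar $B_d(\partial T_\emptyset,\delta)\cap T_\emptyset$, which in turn is bilipschitz to a Euclidean product $\Omega\times(\S^1)^{n-2}$ (with $\Omega$ a neighborhood of $\partial\B^2$) because $d$ is induced there by a smooth Riemannian metric bounded away from the singular set. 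Conjugating by these identifications, the restriction of $f$ to each collar becomes a quasiconformal map of a Euclidean product domain into $\R^n$, with distortion controlled by the distortion of $f$ and the uniform bilipschitz/quasi-similarity constants. Equation~\eqref{eq:Sigma_Mod}, together with the monotonicity of modulus under inclusion of families, then gives
\[
\Mod_{\frac{n}{n-2}}(f\sS_{w_k})\ge \Mod_{\frac{n}{n-2}}(f\sS_{w_k,\delta})\ge\frac{1}{C_0}\Mod_{\frac{n}{n-2},d}(\sS_{w_k,\delta})\ge\frac{c_0}{C_0}>0,
\]
contradicting the vanishing established in the previous paragraph. The main obstacle is precisely this last normalization: one must carefully track the quasi-similarity and bilipschitz constants from Lemma~\ref{lemma:basic_properties}(iii) so that Agard's invariance \eqref{eq:Sigma_Mod} applies to the families $\sS_{w_k,\delta}$ with a single constant $C_0$ independent of $k$. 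Once this is done, Propositions~\ref{prop:Semmes_modulus} and \ref{prop:HW} combine mechanically to close the argument.
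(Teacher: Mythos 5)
Your overall blueprint matches the paper's: deduce Ahlfors regularity and LLC from Lemmas~\ref{lem:Ahlfors} and \ref{lem:LLC}, upgrade a hypothetical quasiconformal $f$ to a quasisymmetry via the Loewner theory, and then derive a contradiction by comparing a uniform lower bound on $\Mod_{n/(n-2)}(f\sS_w)$ (from Proposition~\ref{prop:Semmes_modulus} and quasi-invariance) with a vanishing upper bound along a sequence $w_k$ (from Proposition~\ref{prop:HW} and $\diam f(T_{w_k})\to 0$). Your careful reduction of the quasi-invariance step to a Euclidean product via Lemma~\ref{lemma:basic_properties}(iii), a bilipschitz chart, and Agard's estimate~\eqref{eq:Sigma_Mod} is in fact more explicit than the paper's one-line appeal to ``quasi-invariance of the conformal modulus,'' and it is the right way to make that step rigorous.

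However, there is a genuine gap in your normalization step. You claim that post-composing $f$ with a \emph{M\"obius transformation} of $\S^n$ brings $f(T_\emptyset)$ into the position required by Proposition~\ref{prop:HW}, namely that the round circle $\alpha(z)=(z,0,\dots,0)$ lies in the complement of $f(T_\emptyset)$ at positive distance and is essentially linked with it. This cannot work in general: M\"obius transformations preserve the family of round circles in $\S^n$, but an arbitrary quasisymmetrically embedded $n$-tube in $\S^n$ need not be linked with \emph{any} round circle in its complement --- a quasisymmetric image of $T_\emptyset$ can be rough at all scales, and the curve that Lemma~\ref{lem:link1} produces in its complement is only a topological circle, not a round one. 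Proposition~\ref{prop:HW} genuinely needs the product structure $\B^2\times\B^{n-2}(\delta)$ for its co-area argument, which in turn needs the linking circle to be the literal round circle $\S^1\times\{0\}$. The paper supplies this via Lemma~\ref{lemma:straigthening} (the uniform straightening result of \cite[Proposition~11.1]{PW}): starting from the quasisymmetric collar $h = f\circ h_0\colon\S^1\times\B^{n-1}\to\S^n$ supplied by Lemma~\ref{lemma:TNT} around a smooth curve $\gamma$ linked with $T_\emptyset$, it produces a \emph{quasisymmetric} homeomorphism $\chi$ of $\S^n$, not a M\"obius one, that puts $h(\S^1\times\{0\})$ in standard position with a quantitative collar. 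One then replaces $f$ by $\chi\circ f$ (still quasisymmetric) and runs your modulus comparison for $\chi\circ f$. Without this straightening ingredient, the hypothesis of Proposition~\ref{prop:HW} cannot be verified and the contradiction does not close.
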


This result parallels Semmes's non-parametrization theorem for metrics on $\S^3$ in \cite{SemmesS:Goomsw} in the sense that the Ahlfors $n$-regularity of the space $(\S^n,d)$ is the only condition which restricts the scaling constant $\lambda$ of the metric $d$. In results of Heinonen and Wu \cite{HW} the method of stabilization poses additional restriction for $\lambda$. We refer to \cite[Section 13]{PW} for a discussion and a general necessary condition relating the scaling parameter to the complexity of the defining sequence in quasiconformal non-parametrization questions in the stabilized case.

The remaining ingredient in the proof of Theorem \ref{thm:main} that we have not discussed yet is a uniform straightening lemma from \cite{PW} for quasisymmetrically embedded collared circles. Recall that $\R^n$ embeds in $\S^n$ via the stereographic projection. 

\begin{lem}[{\cite[Proposition 11.1]{PW}}]
\label{lemma:straigthening}
Let $n\geq 4$ and $h\colon \S^1 \times \B^{n-1} \to \R^{n}$ an $\eta$-quasisymmetric embedding. Then there exists a constant $\delta_0>0$ and a homeomorphism $\tilde \eta\colon [0,\infty) \to [0,\infty)$ both depending only on $n$ and $\eta$, and an $\tilde\eta$-quasisymmetric homeomorphism $\chi \colon \S^n \to \S^n$ for which 
\begin{itemize}
\item[(a)] $\S^1 \times \B^{n-2}(\delta_0) \subset (\chi\circ h)(\S^1\times \B^{n-1})$,
\item[(b)] the maps $\S^1\to \R^n$ defined by $z\mapsto (z,0)$ and $z\mapsto (\chi\circ h)(z,0)$ are homotopic in $(\chi\circ h)(\S^1\times \B^{n-1})$.
\end{itemize}
\end{lem}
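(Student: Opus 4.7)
The plan is to prove the uniform straightening lemma by combining three ingredients: a quasisymmetric normalization of three marked points, uniform unknotting of the core quasicircle in codimension at least three, and a compactness/normal families step to keep all constants dependent only on $n$ and $\eta$.

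First I would use the standard freedom in the quasisymmetric gauge of $\S^n$ to normalize. Pick three points $p_1,p_2,p_3\in \S^1\times\{0\}\subset \S^1\times \B^{n-1}$ spaced at bounded quasisymmetric distance on the core circle, and post-compose $h$ with a M\"obius-type self-homeomorphism of $\S^n$ sending $h(p_1),h(p_2),h(p_3)$ to a standard triple on the round circle $\S^1\times\{0\}\subset\R^n\subset\S^n$. After this step one obtains a $\tilde\eta_0$-quasisymmetric embedding $h_0\colon \S^1\times\B^{n-1}\to\S^n$ (with $\tilde\eta_0$ depending only on $\eta,n$) whose image of the core circle is a \emph{normalized} quasicircle $\gamma_0\subset\S^n$ through three standard points.

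Next I would straighten $\gamma_0$ to the standard $\S^1\times\{0\}$. This is where the hypothesis $n\ge 4$ enters: the codimension of the core is at least three, so the quasisymmetric embedding of $\S^1$ is uniformly tame. More precisely, by the qs-analogue of Stallings-Zeeman unknotting, together with a Tukia--V\"ais\"al\"a style extension of quasisymmetric embeddings of $\S^1$ to quasisymmetric self-maps of $\S^n$ in codimension $\ge 3$, one produces a quasisymmetric homeomorphism $\chi_1\colon\S^n\to\S^n$, with constant controlled by $\tilde\eta_0$ and $n$, mapping $\gamma_0$ onto the round circle $\S^1\times\{0\}$. Setting $\chi$ equal to $\chi_1$ composed with the normalization from the first step yields a uniformly $\tilde\eta$-quasisymmetric $\chi$ such that the image $(\chi\circ h)(\S^1\times\B^{n-1})$ is a tubular neighborhood whose core is the standard circle, and (b) follows immediately by a straight-line homotopy in this tubular neighborhood (the core and the pushed-forward boundary loop are already parallel after straightening).

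It remains to produce the uniform radius $\delta_0$ in (a), and this step, together with the previous one, is the main obstacle. The hard part is uniformity of the constants: for each individual $h$ one can find \emph{some} positive $\delta$ for which $\S^1\times\B^{n-2}(\delta)\subset(\chi\circ h)(\S^1\times\B^{n-1})$ by continuity and openness of embeddings, but one needs a lower bound depending only on $\eta,n$. I would argue by normal families and contradiction: suppose there is a sequence $h_k$ of $\eta$-quasisymmetric embeddings with normalized straightenings $\chi_k\circ h_k$ for which the admissible collar radii tend to $0$. After passing to a subsequence, the maps $\chi_k\circ h_k$ converge locally uniformly to a quasisymmetric embedding (by a standard precompactness theorem for $\eta$-quasisymmetric maps with bounded normalization), whose image still contains an open neighborhood of the standard $\S^1$, contradicting the assumption. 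Carrying out this compactness argument (in $\S^n$ with a fixed spherical metric so that normalizations survive under the limit) yields the uniform $\delta_0=\delta_0(\eta,n)$ and completes the proof.
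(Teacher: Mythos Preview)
The paper does not contain a proof of this lemma: it is quoted as \cite[Proposition 11.1]{PW} and used as a black box, so there is no in-paper argument to compare your sketch against.

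That said, your outline is broadly the right shape and matches the standard route to such a statement. The decisive ingredient is the quasisymmetric unknotting/extension of an embedded circle in codimension at least three (this is precisely where $n\ge 4$ enters), and your normalization-plus-compactness scheme for extracting a uniform $\delta_0$ is the expected mechanism. Two points deserve tightening. First, your claim that (b) ``follows immediately'' after straightening is not quite complete: once $(\chi\circ h)(\cdot,0)$ lands on the round circle it is a self-homeomorphism of $\S^1$ of degree $\pm 1$, and for (b) you need degree $+1$; a further composition with a reflection of $\S^n$ handles this, but it should be said. Second, the phrase ``qs-analogue of Stallings--Zeeman unknotting, together with a Tukia--V\"ais\"al\"a style extension'' is doing all the work in your argument; in an actual proof this needs to be a precise citation (V\"ais\"al\"a's quasisymmetric extension theorems for spheres in high codimension are the relevant source), since the uniform dependence of the extension's distortion on $\eta$ and $n$ is exactly the content of the lemma.
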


\begin{proof}[Proof of Theorem \ref{thm:main}]
Let $d$ be a Semmes metric on $\S^n$ having an extended data $(\S^n,\fI_{B,n},\vartheta, \varrho,\lambda,d; (T_w)_w,(\sS_w)_w)$, where $\lambda \in (0,2^{-1/n})$, and a singular set $E$ which is the Bing--Blankinship Cantor set associated to this data. By Lemmas \ref{lem:Ahlfors} and \ref{lem:LLC}, $(\S^n,d)$ is Ahlfors $n$-regular and linearly locally contractible. Thus it remains to show that the metric sphere $(\S^n,d)$ is not quasiconformal to the Euclidean sphere $\S^n$.

Suppose there exists a quasiconformal map $f\colon (\S^n,d)\to \S^n$. Since $(\S^n,d)$ is a Loewner space, $f$ is a quasisymmetry. By Proposition \ref{prop:Semmes_modulus} and the quasi-invariance of the conformal modulus, 
\[
\inf_{w\in \cW_2} \Mod_{\frac{n}{n-2}}(f\sS_w) > 0.
\]
By Lemma \ref{lemma:straigthening}, we may also assume that 
\begin{itemize}
\item[(i)] $\S^1\times \{0\} \subset \S^n \setminus \overline{fT_\emptyset}$ and
\item[(ii)] the map $\alpha \colon \S^1\to \S^n$, $z\mapsto (z,0)$, is not contractible in $\S^n\setminus fT_\emptyset$. 
\end{itemize}
Indeed, let $\gamma \colon \S^1 \to \S^n$ be a smooth simple curve so that $\dist(|\gamma|, T_\emptyset)>0$ and $\gamma$ is not contractible in $\S^n\setminus T_\emptyset$. By Lemma \ref{lemma:TNT} there exists a quasisymmetric embedding $h_0 \colon \S^1\times \B^{n-1} \to (\S^n,d)$ for which $h_0(z,0) = \gamma(z)$. Then $h = f\circ h_0 \colon \S^1\times \B^{n-1} \to \S^n$ is a quasisymmetric embedding. Let now $\chi \colon \S^n \to \S^n$ be a quasisymmetric homeomorphism as in Proposition \ref{lemma:straigthening}. Then conditions (i) and (ii) are satisfied by the map $\chi\circ f$.

Thus, by Proposition \ref{prop:HW}, there exists a sequence $(w_k)_{k\in \N}$ in $\cW_2$ for which 
\[
\Mod_{\frac{n}{n-2}}(f\sS_{w_k}) \to 0
\]
as $k\to \infty$. This contradiction concludes the proof. 
\end{proof}

\section{Proof of Theorem \ref{thm:point}}\label{sec:proof}

The proof of Theorem \ref{thm:point} is a slight modification of the proof of Theorem \ref{thm:main} along the lines of construction of the Riemannian manifold $\tilde M$ in \cite[p.206]{SemmesS:Goomsw} and uses the uniform bounds in Lemma \ref{lemma:straigthening}. Thus we merely indicate the steps.

\subsection{The metric}

We fix first a sequence $(B_k)$ of pair-wise disjoint Euclidean balls in $\R^n$ which converge to the origin; for example, we may take $B_k=\B^n(x_k,r_k)$, where $x_k = e_1/k$ and $r_k \le |x_k|/10$. Let also $\vartheta \colon \T \to \S^n$ be a smooth embedding. 

For each $k\in\N$, we fix an $n$-tube $T^{(k)}=\alpha_k(T)\subset B_k$, where $T=\vartheta(\T)$ and $\alpha_k \colon \R^n\to \R^n$ is a similarity. We plant a finite decomposition tree $\cT_k$ into each $T^{(k)}$ as follows.

For each $k\in \N$, let $\cW^{(k)}_2$ be the collection of all words of length at most $k$. We define finite decomposition trees $\cT^{(k)}$ by $\cT^{(k)} = (\alpha_k(\vartheta(\T_w)))_{w\in \cW^{(k)}_2}$. Note that $\cT^{(k)}$ is in fact a subtree of a decomposition tree of the initial package $(T^{(k)}; \tilde \varphi^{(k)}_1,\tilde \varphi^{(k)}_2)$, where the embedding $\tilde \varphi^{(k)}_i$ is obtained by conjugating $\tilde \varphi_i$ with mappings $\vartheta$ and $\alpha_k$ in the obvious manner. We denote $T^{(k)}_w = \tilde \varphi^{(k)}_w(T^{(k)})$ for $w\in \cW^{(k)}_2$.

Let now $\lambda \in (0,2^{-1/n})$. By applying the construction of the Semmes embedding (Proposition \ref{prop:BB_Semmes}) independently in each ball $B_k$ with respect to the finite decomposition tree $\cT^{(k)}$, we obtain a map $\tilde \rho \colon \S^n \to \S^{n+1}$ which is a smooth embedding in $\S^n\setminus \{0\}$ and for which 
\begin{itemize}
\item[(i)] $\tilde \rho(x)=x$ for $x\not \in \bigcup_{k\in \N}B_k$;
\item[(ii)] there exists $L\ge 1$ so that, for each $k\ge \N$ and $w\in \cW^{(k)}_2$ of length $|w|<k$, 
\[
\frac{\lambda^{|w|}}{L}|x-y| \le |\tilde \rho\circ \tilde\varphi_{w}^{(k)}(x)-\tilde \rho\circ \tilde\varphi_{w}^{(k)}(y)| \le L\lambda^{|w|}|x-y|
\]
for each $x,y\in \tilde\varphi_{w}^{(k)}(T^{(k)}\setminus T^{(k)}_1\cup T^{(k)}_2)$; and
\item[(iii)] for each $w\in \cW_2$ of length $k$, $\tilde\rho(T^{(k)})$ is similar to $T^{(k)}$.
\end{itemize}
Indeed, the embedding $\tilde\rho$ is an intermediate stage in the construction of the embedding in Proposition \ref{prop:BB_Semmes}. We refer to \cite[Lemma 3.21]{SemmesS:Goomsw} and \cite[Section 6]{PW} for details. Let $d$ be the completion of the length metric $d_g$ associated to the Riemannian metric $g = \tilde \rho^*g_0$, where $g_0$ is the Riemannian metric on $\S^{n+1}$.

The space $(\S^n,d)$ is Ahlfors $n$-regular and LLC. Indeed, we note first that the method of the proof of \cite[Proposition 7.8]{PW} (here Lemma 2.1) readily applies also to the embedding $\tilde \rho|\S^n\setminus \{0\}$ and we conclude that
\begin{equation}
\label{eq:Ahl}
\haus^{n}_d(B(x,r)) \approx r^n
\end{equation}
for each ball $B(x,r)\subset \S^n$ not containing the origin. Since also 
\[
\haus^n_d(T^{(k)})\approx \haus^n(T^{(k)})
\]
uniformly in $k$, the argument of \cite[Proposition 7.8]{PW} shows that \eqref{eq:Ahl} holds for all balls $B(x,r)$ in $(\S^n,d)$. Thus $(\S^n,d)$ is Ahlfors $n$-regular. The linear local contractibility of $(\S^n,d)$ is argued along the lines of the proof of \cite[Proposition 7.9]{PW}. 

\subsection{Non-parametrizability}

It remains to prove the non-existence of a quasiconformal homeomorphism $(\S^n,d)\to \S^n$. Suppose towards contradiction that such homeomorphism $f\colon (\S^n,d)\to \S^n$ exists.

For each $k\in \N$ and $w\in \cW^{(k)}_2$ of length $k-1$, let $\sS^{(k)}_w$ be the family $\alpha_k(\vartheta \cS_w)$ of $(n-2)$-tori. By construction of the metric $d$, 
\[
\Mod_{\frac{n}{n-2},d}(\sS^{(k)}_w) \approx \Mod_{\frac{n}{n-2},d_\infty}(\sS_w),
\]
for each $w\in \cW^{(k)}_2$ and $k\in \N$, where $(\S^n,d_\infty)$ is the Semmes space in Theorem \ref{thm:main} and $\sS_w$ the family of surfaces in the proof of Theorem \ref{thm:main}. Thus
\[
\inf_{\substack{k\in \N\\ w\in \cW^{(k)}_2}} \Mod_{\frac{n}{n-2},d}(\sS^{(k)}_w) > 0.
\]

On the other hand, by the properties of the metric $d$, there exists a neighborhood $\Omega$ of $\partial T$ for which each $\alpha_k|\Omega$ is a similarity in metric $d$. Thus, for each $k\in\N$, we can fix smooth simple curves $\gamma_k = \a_k\circ \gamma \colon \S^1 \to \a_k(\Omega)$, where $\gamma$ is a smooth simple curve in $\Omega$ so that $\dist(|\gamma|,\partial T) >0$ and $\gamma$ is not contractible in $\S^n\setminus T$. Note that each $\gamma_k$ is not contractible in $\S^n \setminus T^{(k)}$ and $\dist(|\gamma_k|,T^{(k)}) = C_k\dist(|\gamma|,\partial T)$, where $C_k$ is the similarity constant of $\a_k$. 

We may now fix a homeomorphism $\eta'\colon [0,\infty) \to [0,\infty)$ and, for each $k\in \N$, an $\eta'$-quasisymmetric embedding $h_k \colon \S^1\times \B^{n-1}\to (\S^n,d)$ for which
\begin{itemize}
\item[(1)] $h_k(\S^1\times \B^{n-1}) \subset \S^n\setminus \bigcup_{\ell} T^{(\ell)}$ and
\item[(2)] the map $\S^1 \to \S^n$, $z\mapsto h_k(z,0)$, is not contractible in $\S^n\setminus T^{(k)}$.
\end{itemize}

Thus, by Lemma \ref{lemma:straigthening}, there exists a homeomorphism $\eta''\colon [0,\infty)\to [0,\infty)$, a constant $\delta>0$ and, for each $k\in \N$, an $\eta$-quasisymmetric map $\chi_k \colon \S^n\to \S^n$ for which
\begin{itemize}
\item[(i)] $\S^1\times \B^{n-2}(\delta) \subset \S^n\setminus(\chi_k \circ f \circ h_k)(T^{(k)}_\emptyset)$, and
\item[(ii)] the map $\S^1\to \S^n$, $z\mapsto (\chi_k \circ f\circ h_k)(z)$, is homotopic to $z\mapsto (z,0)$ in $(\chi_k \circ f \circ h_k)(\S^1\times \B^{n-1})$.
\end{itemize}

Thus, by Proposition \ref{prop:HW}, there exists a sequence $(w_k)$ in $\cW_2$ for which 
\[
\Mod_{\frac{n}{n-2}}(f\sS^{(k)}_{w_k}) \to 0
\]
as $k\to \infty$. This is a contradiction. Thus there is no quasiconformal homeomorphism $(\S^n,d)\to \S^n$. This completes the proof of Theorem \ref{thm:point}.

\bibliographystyle{abbrv}
\bibliography{Whitehead}

\end{document}